\renewcommand{\phi}{\varphi}
\newcommand{\tableau}[1]{\begin{ytableau}#1\end{ytableau}}
\newcommand{\tld}[1]{\widetilde{#1}}
\renewcommand{\bar}[1]{\overline{#1}}
\theoremstyle{plain}
\newtheorem{theorem}{Theorem}[section]
\newtheorem{restate}{Theorem}[section]
\newtheorem*{MT}{Main Theorem}
\newtheorem{lemma}[theorem]{Lemma}
\newtheorem{prop}[theorem]{Proposition}
\newtheorem{corollary}[theorem]{Corollary}
\theoremstyle{definition}
\newtheorem{remark}[theorem]{Remark}
\newtheorem{example}[theorem]{Example}
\newtheorem{definition}[theorem]{Definition}
\newtheorem{conjecture}[theorem]{Conjecture}
\DeclareMathOperator{\ddeg}{ddeg}
\DeclareMathOperator{\rank}{rank}
\DeclareMathOperator{\row}{Row}
\DeclareMathOperator{\swap}{swap}
\DeclareMathOperator{\IT}{IT}
\DeclareMathOperator{\SIT}{SIT}
\DeclareMathOperator{\PP}{PP}
\DeclareMathOperator{\kjdt}{\textit{K}-jdt}
\DeclareMathOperator{\kpro}{\textit{K}-pro}
\DeclareMathOperator{\kbk}{\textit{K}-BK}
\def\SetFancyGraph {
	\SetVertexMath
	\GraphInit[vstyle=Art]
	\SetUpVertex[MinSize=2pt]
	\SetVertexLabel
	\tikzset{VertexStyle/.style = {shape = circle,shading = ball,ball color = black,inner sep = 1.5pt}}
	\SetUpEdge[color=black]
	\tikzset{->-/.style={decoration={ markings, mark=at position 0.8 with {\arrow{>}}},postaction={decorate}}}
	\tikzset{->--/.style={decoration={ markings, mark=at position 0.55 with {\arrow{>}}},postaction={decorate}}}
}
\patchcmd{\@setauthors}{\MakeUppercase}{}{}{}
\begin{document}

\newcommand\todo[1]{\textcolor{red}{TODO: #1}}
\newcommand{\dpg}{doppelg\"anger}
\newcommand{\Dpg}{Doppelg\"anger}
\newcommand{\PL}{PL}
\newcommand{\mb}[1]{\mathbb{#1}}
\newcommand{\mc}[1]{\mathcal{#1}}
\newcommand{\Image}{\text{Im }}
\newcommand{\Gr}{\text{Gr}}
\newcommand{\Mat}{\text{Mat}}
\newcommand{\Cr}{\text{cr}}
\newcommand{\Le}{\scalebox{-1}[1]{L}}
\newcommand{\tlN}[1]{\tld{\mc{N}}(}
\newcommand{\jdt}{{jeu-de-taquin}}
\newcommand{\julian}[1]{\textcolor{green}{\sffamily ((JULIAN: #1))}}
\newcommand{\quang}[1]{\textcolor{cyan}{\sffamily ((QUANG: #1))}}
\newcommand{\calvin}[1]{\textcolor{orange}{\sffamily ((CALVIN: #1))}}
\newcommand{\syl}[1]{\textcolor{red}{\sffamily (SYLVESTER: #1)}}

\newcommand{\calP}{\mathcal{P}}

\title{Rowmotion Orbits of Trapezoid Posets}

\author[Q.~Dao]{Quang Vu Dao{$^\clubsuit$}}

\email{{$^\clubsuit$}qvd2000@columbia.edu}
\author[J.~Wellman]{Julian Wellman{$^\diamondsuit$}} 

\email{{$^\diamondsuit$}wellman@mit.edu}
\author[C.~Yost-Wolff]{Calvin Yost-Wolff{$^\heartsuit$}}

\email{{$^\heartsuit$}calvinyw@mit.edu}
\author[S.~Zhang]{Sylvester W.~Zhang{$^\spadesuit$}}
\email{{$^\spadesuit$}zhan5102@umn.edu}
\address{{$^\clubsuit$} Department of Mathematics, University of Columbia, 2990 Broadway, New York, NY 10027}
\address{{$^\diamondsuit$}{$^\heartsuit$} Department of Mathematics, Massachusetts Institute of Technology, 182 Memorial Dr., Cambridge, MA 02142}
\address{{$^\spadesuit$} School of Mathematics, University of Minnesota -- Twin Cities, 206 Church St SE, Minneapolis, MN 55455}

\thanks{{$^\clubsuit$}{$^\diamondsuit$}{$^\heartsuit$}{$^\spadesuit$}Supported by NSF RTG grant DMS-1148634 at the 2019 combinatorics REU program at the School of Mathematics of the University of Minnesota, Twin Cities.}

\maketitle
\begin{abstract}
Rowmotion is an invertible operator on the order ideals of a poset which has been extensively studied and is well understood for the rectangle poset.
In this paper, we show that rowmotion is equivariant with respect to a bijection of Hamaker, Patrias, Pechenik and Williams between order ideals of rectangle and trapezoid posets, thereby affirming a conjecture of Hopkins that the rectangle and trapezoid posets have the same rowmotion orbit structures. Our main tools in proving this are $K$-jeu-de-taquin and (weak) $K$-Knuth equivalence of increasing tableaux. We define \emph{almost minimal tableaux} as a family of tableaux naturally arising from order ideals and show for any $\lambda$, the almost minimal tableaux of shape $\lambda$ are in different (weak) $K$-Knuth equivalence classes. We also discuss and make some progress on related conjectures of Hopkins on down-degree homomesy. 
\end{abstract}
\setcounter{tocdepth}{1}

\tableofcontents
\vspace{-1.5cm}
\section{Introduction}
Rowmotion, denoted $\row$, is an invertible operator on the order ideals of any partially ordered set. For an order ideal $I$, $\row(I)$ is the order ideal generated by the minimal elements of the complement of $I$.  Rowmotion was first introduced by Duchet \cite{rowfirst} and has been extensively studied by many different authors (including Brouwer-Schrijver \cite{bs74}, Fon-der-Flaass \cite{fon1993orbits}, Cameron-Fon-der-Flaass \cite{toggles}, Panyushev \cite{Pan09}, and Striker-Williams \cite{prorow}). The name `rowmotion' is due to Striker and Williams \cite{prorow}. For more history on rowmotion, see \cite[Section 7.1]{TW09}.

We are interested in the action of rowmotion on the following two particular posets: \begin{itemize}[noitemsep]
    \item the \emph{rectangle poset} $\mathscr{R}(a,b):=\{(i,j)\in\mathbb Z^2:1\le i\le a,1\le j\le b\}$, and
    \item the \emph{trapezoid poset} $\mathscr{T}(a,b):=\{(i,j)\in\mathbb Z^2:1\le i\le a,i\le j\le a+b-i\}$
\end{itemize}
for some fixed $a\le b$, where the partial order is induced from the natural order on $\mathbb Z^2$. %
Figure~\ref{fig:rect_trap_normal} gives an example of the Hasse diagrams of two posets $\mathscr R(3,5)$ and $\mathscr T(3,5)$.

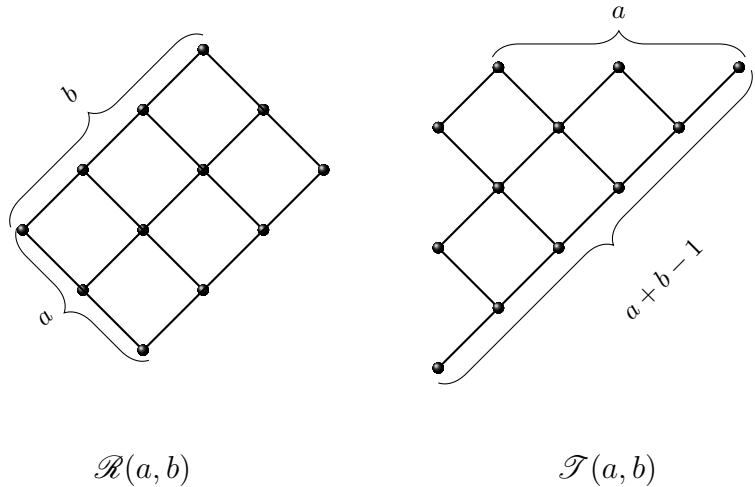
\begin{figure}[h]
\begin{center}
\begin{tikzpicture}[xscale=-0.8,yscale=0.8]
	\SetFancyGraph
	\Vertex[NoLabel,x=0,y=0]{1}
	\Vertex[NoLabel,x=-1,y=1]{2}
	\Vertex[NoLabel,x=1,y=1]{3}
	\Vertex[NoLabel,x=-2,y=2]{4}
	\Vertex[NoLabel,x=0,y=2]{5}
	\Vertex[NoLabel,x=2,y=2]{6}
	\Vertex[NoLabel,x=-3,y=3]{7}
	\Vertex[NoLabel,x=-1,y=3]{8}
	\Vertex[NoLabel,x=1,y=3]{9}
	\Vertex[NoLabel,x=-2,y=4]{10}
	\Vertex[NoLabel,x=0,y=4]{11}
	\Vertex[NoLabel,x=-1,y=5]{12}
	\Edges[style={thick}](1,7)
	\Edges[style={thick}](3,10)
	\Edges[style={thick}](6,12)
	\Edges[style={thick}](1,6)
	\Edges[style={thick}](2,9)
	\Edges[style={thick}](4,11)
	\Edges[style={thick}](7,12)
	\draw [decorate,decoration={brace,amplitude=10pt},yshift=7pt] (2.2,1.8)--(-1,5) node [black,midway,yshift=0.5cm,xshift=-0.5cm]  {\rotatebox{45}{\footnotesize $b$}};

	\draw [decorate,decoration={brace,amplitude=10pt,mirror},yshift=-2pt] (2.1,2.1)--(-0.1,-0.1)  node [black,midway,xshift=-0.5cm,yshift=-0.32cm]   {\rotatebox{45}{\footnotesize $a$}};
	
	\node (a) at (0,-2) {$\mathscr{R}(a,b)$};
\end{tikzpicture}
\quad\quad\quad
\begin{tikzpicture}[scale=0.8]
	\SetFancyGraph
	\begin{scope}[yscale=-1,xscale=1]
	\Vertex[NoLabel,x=0,y=0]{1}
	\Vertex[NoLabel,x=2,y=0]{2}
	\Vertex[NoLabel,x=4,y=0]{3}
	\Vertex[NoLabel,x=-1,y=1]{4}
	\Vertex[NoLabel,x=1,y=1]{5}
	\Vertex[NoLabel,x=3,y=1]{6}
	\Vertex[NoLabel,x=0,y=2]{7}
	\Vertex[NoLabel,x=2,y=2]{8}
	\Vertex[NoLabel,x=-1,y=3]{9}
	\Vertex[NoLabel,x=1,y=3]{10}
	\Vertex[NoLabel,x=0,y=4]{11}
	\Vertex[NoLabel,x=-1,y=5]{12}
	\Edges[style={thick}](1,4)
	\Edges[style={thick}](1,5)
	\Edges[style={thick}](2,5)
	\Edges[style={thick}](2,6)
	\Edges[style={thick}](3,6)
	\Edges[style={thick}](4,7)
	\Edges[style={thick}](5,7)
	\Edges[style={thick}](5,8)
	\Edges[style={thick}](6,8)
	\Edges[style={thick}](7,9)
	\Edges[style={thick}](7,10)
	\Edges[style={thick}](8,10)
	\Edges[style={thick}](9,11)
	\Edges[style={thick}](10,11)
	\Edges[style={thick}](11,12)
	\draw [decorate,decoration={brace,amplitude=10pt},yshift=7pt]   (4.2,-0.2)--(-1,5) node [black,midway,yshift=-0.7cm,xshift=0.9cm]  {\rotatebox{45}{\footnotesize $a+b-1$}};
	
	\draw [decorate,decoration={brace,amplitude=10pt,mirror},yshift=-2pt] (4.1,-0.1)--(-0.1,-0.1) node [black,midway,yshift=0.6cm]  {\footnotesize $a$};
	\end{scope}
	\node (a) at (1.8,-6.7) {$\mathscr{T}(a,b)$};
\end{tikzpicture}
\end{center}

    \caption{The rectangle poset and trapezoid poset}
    \label{fig:rect_trap_normal}
\end{figure}

%\quang{Has this been resolved?}%\todo{draw the Hasse diagrams in the conventional way with the smaller elements below the bigger elements. Throughout you should adopt this change. Note that this change will only apply to the actual Hasse diagram you draw; with Young diagrams, it is okay to use the French notation as you are doing. So this only should involve a few changes (Figures 1 and 3).}

The action of rowmotion on the rectangle poset $\mathscr R(a,b)$ is well studied and well understood. For instance, its orbit structure is completely understood: Propp and Roby (expanding upon a remark of Stanley \cite{sta09} and with further input from Hugh Thomas) explained that the action of rowmotion on the rectangle is the same as the action of cyclic rotation on binary words with $a$ $0$’s and $b$ $1$’s \cite[Proposition 26]{propp2015homomesy}. Binary words under rotation are a fundamental example for the \emph{cyclic sieving phenomenon} (see \cite{rsw04}). Propp and Roby called the correspondence between order ideals of the rectangle and binary words
the “Stanley-Thomas word” correspondence, and they used the Stanley-Thomas word to deduce various other nice properties of rowmotion on the rectangle, such as \emph{homomesy} (see Section \ref{subsec:ddeg_statistic} for more on homomesy).

%is known to exhibit various nice properties, such as cyclic sieving property \cite{rs13cyclic}. Specifically, its orbit structure is completely understood: Propp and Roby (expanding upon a remark of Stanley) showed that the action of rowmotion is the same as the action of cyclic rotation on necklaces with $a$ black beads and $b$ white beads.

The rectangle is the prototypical example of a \emph{minuscule poset}. The aforementioned results concerning the nice behavior of rowmotion on the rectangle have been extended to all minuscule posets in the work of Rush and his co-authors \cite{rs13cyclic} \cite{rush15}.

On the other hand, rowmotion on the trapezoid poset has remained mysterious. In fact, the order of rowmotion on the trapezoid was still unknown before our work. Recently, however, Sam Hopkins \cite{hopkins2019minuscule}, building on work of Hamaker-Patrias-Pechenik-Williams \cite{HPPW18} and others, made a series of conjectures describing ways in which the two posets $\mathscr R(a,b)$ and $\mathscr T(a,b)$ are remarkably similar.
%behave as if they have isomorphic comparability graph, even though they do not.
In particular, Hopkins conjectured the following, which we prove as our main result.

\begin{MT}[cf. {\cite[Conjecture 4.9.1]{hopkins2019minuscule}}]\label{bigboii}
 The action of rowmotion on order ideals of the trapezoid poset $\mathscr T(a,b)$ has the same orbit structure as rowmotion on order ideals of the rectangle poset $\mathscr R(a,b)$.
\end{MT}

In 1983, Proctor \cite{pro83} proved that $\mathscr R(a,b)$ and $\mathscr T(a,b)$ have the same \emph{order polynomial}, which implies in particular that they have the same number of order ideals. Since then, many different bijections between the set of order ideals of $\mathscr R(a,b)$ and $\mathscr T(a,b)$ have been discovered \cite{stembridge86,elizalde2015bijections,courtiel2018bijections, HPPW18}, among which the bijection $\varphi$ of Hamaker, Patrias, Pechenik and Williams \cite{HPPW18} is central to our proof.

\begin{theorem}
\label{thm:commute}
The bijection $\varphi$ of \cite{HPPW18} commutes with rowmotion, i.e. for any order ideal  $\mathcal I\in J(\mathscr R(a,b))$, we have
\[\varphi \circ \row(\mathcal I)=\row \circ\ \varphi(\mathcal I).\]
\end{theorem}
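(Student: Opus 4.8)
The plan is to lift the statement out of the poset of order ideals into the world of increasing tableaux, where both $\varphi$ and rowmotion admit clean descriptions in terms of $K$-jeu-de-taquin, and then to prove the commutation at the level of (weak) $K$-Knuth equivalence classes. First I would fix the encoding: to an order ideal $\mathcal I \in J(\mathscr R(a,b))$ I attach the \emph{almost minimal tableau} naturally read off from $\mathcal I$, an increasing tableau of the shape cut out by the boundary of $\mathcal I$. This assigns to each order ideal a (weak) $K$-Knuth equivalence class of increasing tableaux. The bijection $\varphi$ of \cite{HPPW18} is, through this encoding, essentially a $K$-jeu-de-taquin rectification sliding the rectangular region to the trapezoidal one; since $K$-jeu-de-taquin slides preserve (weak) $K$-Knuth equivalence, $\varphi$ descends to a well-defined map on these classes.

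Next I would supply a tableau-theoretic model for rowmotion. Using the toggle-group description of $\row$ (following Cameron--Fon-der-Flaass and Striker--Williams), I would show that, under the almost minimal tableau encoding, $\row$ is implemented by a prescribed sequence of $K$-jeu-de-taquin slides---a $K$-promotion-type operation $\kpro$ together with a relabeling of entries. The structural heart of the argument is then an interchange property: the slides carrying out the rectification defining $\varphi$ and the slides carrying out rowmotion can be performed in either order without changing the resulting (weak) $K$-Knuth class, since slides supported on disjoint regions commute and, more generally, $K$-jeu-de-taquin is confluent up to (weak) $K$-Knuth equivalence. This yields a square that commutes at the level of equivalence classes: rectifying then promoting $[\mathcal I]$ gives the same class as promoting then rectifying.

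Finally, to upgrade equality of classes to equality of actual order ideals, I would invoke the rigidity lemma that, for each shape $\lambda$, distinct almost minimal tableaux lie in distinct (weak) $K$-Knuth equivalence classes. This injectivity means the almost minimal tableau---and hence the order ideal itself---is recoverable from its class, so the class identity $[\varphi\circ\row(\mathcal I)] = [\row\circ\varphi(\mathcal I)]$ sharpens to $\varphi\circ\row(\mathcal I) = \row\circ\varphi(\mathcal I)$. I expect the genuine obstacle to lie in the $K$-theoretic subtlety that $K$-jeu-de-taquin rectification is, in general, order-dependent: unlike classical jeu-de-taquin one cannot freely commute slides, so proving that the rowmotion slides and the rectification slides genuinely commute (equivalently, that the relevant tableaux behave like unique rectification targets) is the delicate step, and it is precisely here that the rigidity of almost minimal tableaux must be brought to bear.
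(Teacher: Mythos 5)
Your overall architecture is the same as the paper's: encode order ideals as almost minimal tableaux, observe that $\varphi$ is a composition of $\kjdt$ slides and hence preserves weak $K$-Knuth classes, realize (inverse) rowmotion as $K$-promotion, and use the rigidity statement (Theorem~\ref{shapeunique}, via Corollary~\ref{cor:intervalImpliesEqual}) to recover the order ideal from its equivalence class. But the central step --- that the promotion slides and the rectification slides ``commute up to weak $K$-Knuth equivalence'' --- is not established by your appeal to confluence or to slides with disjoint supports, and as stated that justification fails. The obstruction is that $\kpro$ is \emph{not} a composition of $\kjdt$ slides: its first step deletes every entry equal to $1$ and decrements the rest, and its last step fills the vacated boxes with the maximal label. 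Neither operation is a slide and neither preserves the weak $K$-Knuth class (the reading words do not even use the same letters afterwards), so ``$K$-jeu-de-taquin is confluent up to equivalence'' says nothing about them. What actually needs proving is the implication ``$T\equiv T'$ implies $\row^{-1}(T)\equiv\row^{-1}(T')$'' for the relevant almost minimal tableaux, and that implication is the whole content of the theorem, not something that follows from slides preserving classes.

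The missing ingredient is Buch and Samuel's restriction lemma (Lemma~\ref{lem:ResInterval}): if $T$ and $T'$ are $\kjdt$ equivalent then so are $T|_{[a,b]}$ and $T'|_{[a,b]}$. The paper uses it twice. The deletion step of $\kpro$ is handled by restricting the equivalent pair $T\equiv T'$ to the interval $[2,r_m+1]$, which shows $T_1\equiv T_1'$; the middle rectification step preserves equivalence because it genuinely is a slide; and the re-filling step is handled \emph{indirectly}: one shows the promoted rectangle tableau $T_3$ is equivalent to \emph{some} almost minimal trapezoid tableau $T^*$ (Corollary~\ref{lem:hppwuniquematch}), restricts both $T^*$ and the promoted trapezoid tableau $T_3'$ to $[1,r_m]$ to strip off the newly added maximal entries, and only then invokes Corollary~\ref{cor:intervalImpliesEqual} to force $T^*=T_3'$. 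So rigidity enters at a different point and does more work than in your sketch, and without the restriction lemma your argument cannot get past either the deletion or the re-filling step. (A small additional imprecision: the almost minimal tableau attached to $\mathcal I$ has the shape of the full rectangle, with $\mathcal I$ recorded as its minimal ideal, not ``the shape cut out by the boundary of $\mathcal I$.'')
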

\begin{example} Rowmotion on $\mathscr R(2,2)$ and $\mathscr T(2,2)$ has order 4 and more specifically one orbit of size $4$ and one orbit of size $2$. The corresponding Stanley-Thomas word is given on top of each ideal.

\begin{center}
$0011$\hspace{2.14cm}$1001$\hspace{2.14cm}$1100$\hspace{2.14cm}$0110$

\vspace{0.2cm}

\begin{tikzpicture}
	\node () at (0,0) {$\cdots\xrightarrow{\ \row\ }$};
	\node () at (0,-0.68){};
\end{tikzpicture}
\begin{tikzpicture}[scale=0.6]
	\SetFancyGraph
	\Vertex[NoLabel,x=0,y=0]{1}
	\Vertex[NoLabel,x=-1,y=1]{2}
	\Vertex[NoLabel,x=1,y=1]{3}
	\Vertex[NoLabel,x=0,y=2]{4}
	\Edges[style={thick}](1,2)
	\Edges[style={thick}](4,2)
	\Edges[style={thick}](3,1)
	\Edges[style={thick}](3,4)
\end{tikzpicture}
\begin{tikzpicture}
	\node () at (0,0) {$\xrightarrow{\ \row\ }$};
	\node () at (0,-0.68){};
\end{tikzpicture}
\begin{tikzpicture}[scale=0.6]
	\SetFancyGraph
	\Vertex[NoLabel,x=0,y=0]{1}
	\Vertex[NoLabel,x=-1,y=1]{2}
	\Vertex[NoLabel,x=1,y=1]{3}
	\Vertex[NoLabel,x=0,y=2]{4}
	\Edges[style={thick}](1,2)
	\Edges[style={thick}](4,2)
	\Edges[style={thick}](3,1)
	\Edges[style={thick}](3,4)
	\filldraw [red](0,0) circle (2pt);
\end{tikzpicture}
\begin{tikzpicture}
	\node () at (0,0) {$\xrightarrow{\ \row\ }$};
	\node () at (0,-0.68){};
\end{tikzpicture}
\begin{tikzpicture}[scale=0.6]
	\SetFancyGraph
	\Vertex[NoLabel,x=0,y=0]{1}
	\Vertex[NoLabel,x=-1,y=1]{2}
	\Vertex[NoLabel,x=1,y=1]{3}
	\Vertex[NoLabel,x=0,y=2]{4}
	\Edges[style={thick}](1,2)
	\Edges[style={thick}](4,2)
	\Edges[style={thick}](3,1)
	\Edges[style={thick}](3,4)
	\filldraw [red](0,0) circle (2pt);
	\filldraw [red](-1,1) circle (2pt);
	\filldraw [red](1,1) circle (2pt);
\end{tikzpicture}
\begin{tikzpicture}
	\node () at (0,0) {$\xrightarrow{\ \row\ }$};
	\node () at (0,-0.68){};
\end{tikzpicture}
\begin{tikzpicture}[scale=0.6]
	\SetFancyGraph
	\Vertex[NoLabel,x=0,y=0]{1}
	\Vertex[NoLabel,x=-1,y=1]{2}
	\Vertex[NoLabel,x=1,y=1]{3}
	\Vertex[NoLabel,x=0,y=2]{4}
	\Edges[style={thick}](1,2)
	\Edges[style={thick}](4,2)
	\Edges[style={thick}](3,1)
	\Edges[style={thick}](3,4)
	\filldraw [red](0,0) circle (2pt);
	\filldraw [red](-1,1) circle (2pt);
	\filldraw [red](1,1) circle (2pt);
	\filldraw [red](0,2) circle (2pt);
	
\end{tikzpicture}
\begin{tikzpicture}
	\node () at (0,0) {$\xrightarrow{\ \row\ }\cdots$};
	\node () at (0,-0.68){};
\end{tikzpicture}
\end{center}
\begin{center}
\ \ 
    \begin{tikzpicture}
        \draw [->] (0,0.6)--(0,-0.6);
        \node ()at (0.2,0) {$\varphi$};
    \end{tikzpicture}
    \hspace{2.16cm}
    \begin{tikzpicture}
        \draw [->] (0,0.6)--(0,-0.6);
        \node ()at (0.2,0) {$\varphi$};
    \end{tikzpicture}
    \hspace{2.16cm}
    \begin{tikzpicture}
        \draw [->] (0,0.6)--(0,-0.6);
        \node ()at (0.2,0) {$\varphi$};
    \end{tikzpicture}
    \hspace{2.16cm}
    \begin{tikzpicture}
        \draw [->] (0,0.6)--(0,-0.6);
        \node ()at (0.2,0) {$\varphi$};
    \end{tikzpicture}
\end{center}

\begin{center}
\begin{tikzpicture}
	\node () at (0,0) {$\cdots\xrightarrow{\ \row\ }$};
	\node () at (0,-0.68){};
\end{tikzpicture}
\begin{tikzpicture}[scale=0.6]
	\SetFancyGraph
	\Vertex[NoLabel,x=0,y=0]{1}
	
	\Vertex[NoLabel,x=1,y=1]{2}
	
	\Vertex[NoLabel,x=0,y=2]{3}
	\Vertex[NoLabel,x=2,y=2]{4}
	\Edges[style={thick}](1,4)
	\Edges[style={thick}](2,3)
	
\end{tikzpicture}
\begin{tikzpicture}
	\node () at (0,0) {$\xrightarrow{\ \row\ }$};
	\node () at (0,-0.68){};
\end{tikzpicture}
\begin{tikzpicture}[scale=0.6]
	\SetFancyGraph
	\Vertex[NoLabel,x=0,y=0]{1}
	
	\Vertex[NoLabel,x=1,y=1]{2}
	
	\Vertex[NoLabel,x=0,y=2]{3}
	\Vertex[NoLabel,x=2,y=2]{4}
	\Edges[style={thick}](1,4)
	\Edges[style={thick}](2,3)
	\filldraw [red](0,0) circle (2pt);
	
\end{tikzpicture}
\begin{tikzpicture}
	\node () at (0,0) {$\xrightarrow{\ \row\ }$};
	\node () at (0,-0.68){};
\end{tikzpicture}
\begin{tikzpicture}[scale=0.6]
	\SetFancyGraph
	\Vertex[NoLabel,x=0,y=0]{1}
	
	\Vertex[NoLabel,x=1,y=1]{2}
	
	\Vertex[NoLabel,x=0,y=2]{3}
	\Vertex[NoLabel,x=2,y=2]{4}
	\Edges[style={thick}](1,4)
	\Edges[style={thick}](2,3)
	\filldraw [red](0,0) circle (2pt);
	\filldraw [red](1,1) circle (2pt);
	
\end{tikzpicture}
\begin{tikzpicture}
	\node () at (0,0) {$\xrightarrow{\ \row\ }$};
	\node () at (0,-0.68){};
\end{tikzpicture}
\begin{tikzpicture}[scale=0.6]
	\SetFancyGraph
	\Vertex[NoLabel,x=0,y=0]{1}
	
	\Vertex[NoLabel,x=1,y=1]{2}
	
	\Vertex[NoLabel,x=0,y=2]{3}
	\Vertex[NoLabel,x=2,y=2]{4}
	\Edges[style={thick}](1,4)
	\Edges[style={thick}](2,3)
	\filldraw [red](0,0) circle (2pt);
	\filldraw [red](1,1) circle (2pt);
	\filldraw [red](0,2) circle (2pt);
	\filldraw [red](2,2) circle (2pt);
\end{tikzpicture}
\begin{tikzpicture}
	\node () at (0,0) {$\xrightarrow{\ \row\ }\cdots$};
	\node () at (0,-0.68){};
\end{tikzpicture}
\end{center}
%%%%
%%%%
%%%%
\vspace{1cm}
\begin{center}
$1010$\hspace{2.14cm}$0101$

\vspace{0.2cm}

	\begin{tikzpicture}
	\node () at (0,0) {$\cdots\xrightarrow{\ \row\ }$};
	\node () at (0,-0.68){};
\end{tikzpicture}
\begin{tikzpicture}[scale=0.6]
	\SetFancyGraph
	\Vertex[NoLabel,x=0,y=0]{1}
	\Vertex[NoLabel,x=-1,y=1]{2}
	\Vertex[NoLabel,x=1,y=1]{3}
	\Vertex[NoLabel,x=0,y=2]{4}
	\Edges[style={thick}](1,2)
	\Edges[style={thick}](4,2)
	\Edges[style={thick}](3,1)
	\Edges[style={thick}](3,4)
	\filldraw [red](0,0) circle (2pt);
	\filldraw [red](-1,1) circle (2pt);
\end{tikzpicture}
\begin{tikzpicture}
	\node () at (0,0) {$\xrightarrow{\ \row\ }$};
	\node () at (0,-0.68){};
\end{tikzpicture}
\begin{tikzpicture}[scale=0.6]
	\SetFancyGraph
	\Vertex[NoLabel,x=0,y=0]{1}
	\Vertex[NoLabel,x=-1,y=1]{2}
	\Vertex[NoLabel,x=1,y=1]{3}
	\Vertex[NoLabel,x=0,y=2]{4}
	\Edges[style={thick}](1,2)
	\Edges[style={thick}](4,2)
	\Edges[style={thick}](3,1)
	\Edges[style={thick}](3,4)
	\filldraw [red](0,0) circle (2pt);
	\filldraw [red](1,1) circle (2pt);
\end{tikzpicture}
\begin{tikzpicture}
	\node () at (0,0) {$\xrightarrow{\ \row\ }\cdots$};
	\node () at (0,-0.68){};
\end{tikzpicture}
\end{center}

\begin{center}
\hspace{0.1cm}    
    \begin{tikzpicture}
        \draw [->] (0,0.6)--(0,-0.6);
        \node ()at (0.2,0) {$\varphi$};
    \end{tikzpicture}
    \hspace{2.16cm}
    \begin{tikzpicture}
        \draw [->] (0,0.6)--(0,-0.6);
        \node ()at (0.2,0) {$\varphi$};
    \end{tikzpicture}
    
\end{center}

\begin{center}
	\begin{tikzpicture}
	\node () at (0,0) {$\cdots\xrightarrow{\ \row\ }$};
	\node () at (0,-0.68){};
\end{tikzpicture}
\begin{tikzpicture}[scale=0.6]
	\SetFancyGraph
	\Vertex[NoLabel,x=0,y=0]{1}
	
	\Vertex[NoLabel,x=1,y=1]{2}
	
	\Vertex[NoLabel,x=0,y=2]{3}
	\Vertex[NoLabel,x=2,y=2]{4}
	\Edges[style={thick}](1,4)
	\Edges[style={thick}](2,3)
	\filldraw [red](0,0) circle (2pt);
	\filldraw [red](1,1) circle (2pt);
	%\filldraw [red](0,2) circle (2pt);
	\filldraw [red](2,2) circle (2pt);
	
\end{tikzpicture}
\begin{tikzpicture}
	\node () at (0,0) {$\xrightarrow{\ \row\ }$};
	\node () at (0,-0.68){};
\end{tikzpicture}
\begin{tikzpicture}[scale=0.6]
	\SetFancyGraph
	\Vertex[NoLabel,x=0,y=0]{1}
	
	\Vertex[NoLabel,x=1,y=1]{2}
	
	\Vertex[NoLabel,x=0,y=2]{3}
	\Vertex[NoLabel,x=2,y=2]{4}
	\Edges[style={thick}](1,4)
	\Edges[style={thick}](2,3)
	\filldraw [red](0,0) circle (2pt);
	\filldraw [red](1,1) circle (2pt);
	\filldraw [red](0,2) circle (2pt);
	%\filldraw [red](2,2) circle (2pt);
	
\end{tikzpicture}
\begin{tikzpicture}
	\node () at (0,0) {$\xrightarrow{\ \row\ }\cdots$};
	\node () at (0,-0.68){};
\end{tikzpicture}
\end{center}

\end{example}

%\todo{Do we want to show the Stanley Thomas words here too?}\syl{I'll say maybe not, otherwise we have to spend quite a couple of lines defining the bijection}

The main theorem follows straightforwardly from Theorem \ref{thm:commute}; thus the goal of the rest of the paper is to prove Theorem \ref{thm:commute}. %which is done via the help of \textit{weak $K$-Knuth equivalence relations} of Buch and Samuel \cite{buch2016k}, which is the $K$-theoretic analogue of the Knuth equivalence.

\begin{remark}
The bijection $\phi$ of Hamaker, Patrias, Pechenik and Williams comes from the $K$-theoretic Schubert calculus of miniscule varieties. In their paper, they defined a bijection between $\calP$-partitions of three different pairs of posets which they called \emph{minuscule \dpg\ pairs}. %However, the commutativity with rowmotion only holds in the level of order ideals. 
In fact, we will show $\varphi$ commutes with rowmotion on order ideals for each minuscule \dpg\ pair, with the rectangle-trapezoid pair being the hardest case.
\end{remark}

As hinted at in the preceding remark, although our main theorem concerns elementary combinatorial objects and actions, some sophisticated tools from algebra and geometry underlie our proofs, as we now explain.

In \cite{buch2008stable}, Buch et al. introduced the Hecke insertion algorithm as the $K$-theoretic analogue of the Schensted insertion algorithm. This insertion algorithm produces a class of tableaux whose entries are strictly increasing along columns and rows, hence called \textit{increasing tableaux}.
%\footnote{All tableaux in this paper are increasing tableaux and the adjective `increasing' will be omitted.}
Thomas and Yong \cite{TY09} introduced a $K$-theoretic version of  Sch\"utzenberger's {\jdt} operation for increasing tableaux, which is the ``building block'' for the bijection $\varphi$.

We say a tableau is \textit{almost minimal} if its entries are at most $1$ larger than the rank of the entry.
One can realize an order ideal of a rectangle poset (resp. trapezoid poset) as an {almost minimal ordinary (resp. shifted) tableau} (\Cref{def:almost}). Then the bijection $\varphi$ applies a sequence of $K$-{\jdt} slides turning the ordinary (rectangle) tableau into a shifted (trapezoid) tableau.
The cornerstone of our proof is the \textit{(weak) $K$-Knuth equivalence} of Buch and Samuel \cite{buch2016k}, which in some sense dictates the behavior of $K$-jeu-de-taquin on shifted and ordinary tableaux. In particular, we establish the following theorem, which is the main step to proving \Cref{thm:commute}.

\begin{theorem}\label{shapeunique}
	Consider non-skew increasing tableaux.
	\begin{itemize}[nosep]
	    \item Almost minimal ordinary tableaux of the same shape are in separate $K$-Knuth equivalence classes.
	    \item Almost minimal shifted tableaux of the same shape are in separate weak $K$-Knuth equivalence classes.
	\end{itemize}
\end{theorem}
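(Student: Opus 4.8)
The plan is to show that the map sending a sub-diagram $\mu\subseteq\lambda$ to the almost minimal tableau $T_\mu$ (whose box $(i,j)$ carries the value $i+j-1$ if $(i,j)\in\mu$ and $i+j$ otherwise) is injective after passing to (weak) $K$-Knuth classes; since every order ideal of the box poset of $\lambda$ arises as exactly one such $\mu$, this is precisely the assertion. My first move is to reduce both bullets to a single statement about shapes. For a value $t$, restricting a word to its letters $\le t$ is compatible with every defining relation of both $K$-Knuth and weak $K$-Knuth equivalence (one checks this on each generator, including the swap of the first two letters in the weak relation), so restriction descends to a well-defined operation on equivalence classes. In any increasing tableau the boxes of value $\le t$ form a sub-diagram $\nu^{(t)}$, the corresponding subtableau is again almost minimal, and its reading word is exactly $\reading(T_\mu)$ with its letters $>t$ deleted. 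Hence the entire chain of classes $[\,\reading(T_\mu)|_{\le t}\,]$ is determined by $[\,\reading(T_\mu)\,]$.

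The key point is that the chain of shapes $\nu^{(1)}\subseteq\nu^{(2)}\subseteq\cdots$ reconstructs $T_\mu$ completely: the entry in box $(i,j)$ is the least $t$ with $(i,j)\in\nu^{(t)}$, and in an almost minimal tableau this value also records whether $(i,j)\in\mu$, namely $(i,j)\in\mu$ iff its value equals $i+j-1$. Thus the theorem follows once I establish the \textbf{Key Lemma}: the shape of an almost minimal (ordinary, resp.\ shifted) tableau is determined by its $K$-Knuth (resp.\ weak $K$-Knuth) class. Indeed, applying the Key Lemma to each restricted subtableau $T_\mu|_{\le t}$ recovers every $\nu^{(t)}$, and therefore $T_\mu$, from its class alone.

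For ordinary tableaux I would deduce the Key Lemma from Hecke insertion: the shape of the insertion tableau $P$ is a $K$-Knuth invariant, and the row reading word of an increasing tableau reinserts to that tableau, so the shape of $P$ equals the original shape. (Here almost minimality is not even needed; it is the shifted case that forces it.) The main obstacle is the shifted bullet, where the governing equivalence is the strictly coarser weak $K$-Knuth relation. Its extra relations — the swap of the first two letters, and the primed/diagonal identifications coming from the shifted reading word — destroy the cheap invariants (for instance the length of a longest strictly increasing subsequence is no longer preserved), and for a \emph{general} shifted increasing tableau the shape genuinely cannot be read off from its weak class.

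The heart of the argument is therefore to show that almost minimality rescues shape-recovery in the shifted setting. The leverage is that each value in an almost minimal tableau occupies only the two consecutive antidiagonals $\{\,i+j-1=t\,\}$ and $\{\,i+j=t\,\}$; this confinement localizes all of the ambiguous primed-diagonal interactions to a single pair of adjacent antidiagonals, so the coarsening introduced by the weak relations cannot push a box across the boundary between $\nu^{(t-1)}$ and $\nu^{(t)}$. I would make this precise by analyzing shifted Hecke insertion (equivalently, a direct weak $K$-Knuth normal form) on reading words of almost minimal shifted tableaux and verifying that no lossy $K$-theoretic merge occurs, so that the shifted insertion recovers $T_\mu$. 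Establishing this confinement, and checking that it survives the first-letter swap, is the step I expect to be the most delicate, and it is exactly where the trapezoid (shifted) case is harder than the rectangle (ordinary) case.
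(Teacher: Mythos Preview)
Your reduction via restriction to $[1,t]$ is fine, but your justification of the Key Lemma in the ordinary case rests on a false premise. You assert that ``the shape of the insertion tableau $P$ is a $K$-Knuth invariant,'' and even that ``almost minimality is not even needed.'' This is not true: $K$-Knuth equivalent words can Hecke-insert to straight tableaux of \emph{different} shapes. The paper itself reproduces an example (from Gaetz et al.) of four straight increasing tableaux of four distinct shapes lying in a single $K$-Knuth class; since the reading word of a straight increasing tableau Hecke-inserts to itself, this directly falsifies your claim. Unlike ordinary RSK, Hecke insertion does \emph{not} descend to $K$-Knuth classes---this is precisely why the notion of ``unique rectification target'' is nontrivial. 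So your ordinary-case argument has a genuine gap, and since your shifted-case plan (analyzing shifted Hecke insertion to rule out ``lossy merges'') is only a sketch built on the same intuition, it inherits the same problem.

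The paper's route avoids this entirely. For the ordinary case it uses the Hecke \emph{permutation} $w(T)$, which \emph{is} a $K$-Knuth invariant (Buch--Samuel), and proves by a direct row-by-row analysis that $w(T)$ determines the minimal ideal $I_0$ of any straight ordinary tableau; since an almost minimal tableau of fixed shape is determined by its minimal ideal, this suffices. For the shifted case it does not touch shifted Hecke insertion at all: it uses the doubling $T\mapsto T^2$ (reflecting a shifted tableau across the diagonal to get an ordinary one), together with the Buch--Samuel fact that $T\equiv T'$ (shifted) implies $T^2\equiv (T')^2$ (ordinary), to reduce to the ordinary case already handled. If you want to repair your approach, replace the Hecke-insertion shape invariant with the Hecke permutation, and for the shifted bullet use $T\mapsto T^2$ rather than attempting a direct shifted insertion analysis.
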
 

The plan of the paper is as follows. In \Cref{background}, we review the basics of poset, tableaux and rowmotion. \Cref{jdtandhppw} surveys the $K$-{\jdt} theory and the bijection $\varphi$ of \cite{HPPW18}. \Cref{KKnuth} is devoted to the $K$-Knuth equivalence relations and a proof for \Cref{shapeunique}. In \Cref{proofmain}, we prove the main result: \Cref{thm:commute}.  Lastly, in \Cref{conjectures}, we survey some remaining conjectures of Hopkins involving rowmotion on miniscule \dpg\ pairs.

\section*{Acknowledgements}
This research was carried out as part of the 2019 REU program at the School of Mathematics of the University of Minnesota, Twin Cities. The authors are grateful for the support of NSF RTG grant DMS-1148634 for the REU program. The authors would like to thank their mentor Sam Hopkins, as well as Andy Hardt and Vic Reiner for their mentorship and guidance.  We are especially grateful to Sam Hopkins for carefully reading an earlier draft of this document and suggesting many helpful revisions.
\section{Preliminaries}\label{background}

\subsection{Partially Ordered Sets}\label{subsec:poset}
We will largely follow the convention of Stanley \cite{EC1} on partially ordered sets.

A \textit{(finite) partially ordered set} (henceforth abbreviated a \textit{poset}) is a finite set $\calP$ with a binary relation $\le$ that is reflexive, anti-symmetric, and transitive. Two elements $x,y \in \calP$ are \textit{comparable} if we have $x \le y$ or $y \le x$, and \textit{incomparable} otherwise. We say $y$ \textit{covers} $x$ or $x$ \textit{is covered by} $y$, denoted $x\lessdot y$, if $x \le y$ and there does not exist $z \in \calP$ such that $x\le z \le y$. The \textit{Hasse diagram} of poset $\calP$ is an undirected graph drawn in the plane with vertex set $\calP$, and an edge between $y$ and $x$, and $y$ drawn above $x$ if $y$ covers $x$. 

A \emph{chain} in a poset $\calP$ is a totally ordered subset of $\calP$. We say that $\calP$ is \emph{graded} if all maximal (by inclusion) chains in $\calP$ have the same size. We say that $\calP$ is \emph{ranked} if there exists a rank function $\;\rank: \calP \rightarrow \mathbb{Z}\;$ satisfying $\rank(y) = \rank(x)+1$ whenever $x \lessdot y$. We assume all rank functions are normalized so that $\min \{\rank(p): p \in \calP\}=1$, in which case a rank function is unique if it exists. Graded posets are always ranked. The posets $\mathscr R(a,b)$ and $\mathscr T(a,b)$ are examples of graded posets, where the rank of an element $(i,j)$ is $i+j-1$.

Given a poset $\calP$, an \textit{order ideal} $I$ of $\calP$ is a subset of $\calP$ that is downward closed, i.e: if $x \in I$ and $y \le x$ in $\calP$, then $y \in I$ as well.  We will often use the shorthand \textit{ideal} $I$ for an order ideal $I$. %A \textit{filter ideal} $I'$ of $\calP$ is the complement of an order ideal, or equivalently, a subset of $\calP$ that is upward closed. 
We denote the set of order ideals of $\calP$ by $J(\calP)$, which is itself a poset whose partial order is given by inclusion. For any subset $S$ of $\calP$, we define $\min(S)$ and $\max(S)$ to be the set of minimal and maximal elements of $S$, respectively. The set $\max(I)$ of maximal elements of an order ideal $I\in J(\calP)$ is an \textit{antichain}, i.e., a subset of $\calP$ of pairwise incomparable elements. In general, the set of order ideals of $\calP$ is in bijection with the set of antichains via the map $I \mapsto \max(I)$, with the reverse map sending an antichain $A$ to the order ideal $\langle A \rangle$ \textit{generated by} $A$, i.e.: the ideal $\langle A \rangle := \{x \in \calP \mid x \le y$ for some $y \in A\}$. %The \textit{down-degree} of $I$ is defined to be the number of maximal elements of $I$.

A \emph{linear extension} of a poset $\calP$ is a total ordering $p_1\le p_2\le\cdots \le p_{\#\calP}$ of the elements of $\calP$ which extends the partial order $\le$ of $\calP$ in the sense that $p_i \le p_j$ implies $i<j$; equivalently, a linear extension is an order-preserving bijection $\rho: \calP \to \{1,2,\dots,\#\calP\}$. 
A related notion is that of $\calP$-partition: a \emph{$\calP$-partition of height m} is an order-preserving map from $\calP$ to $[m]:=\{0,1,2,...,m\}$. Denote by $\PP^m(\mathcal{P})$ the set of all $\mathcal{P}$-partitions of height m. There is a natural identification of $J(\calP)$ and $\PP^1(\mathcal{P})$ which sends an order ideal $I \in J(\calP)$ to the indicator function of its complement $\calP\setminus I$. In what follows we will often implicitly identify order ideals and height 1 $\calP$-partitions in this way.

%Generalizing the notion of linear extension, a \textit{$\calP$-partition of $\mathcal P$ of height $m$} is an order-preserving map from $\mathcal P$ to $[m]=\{0,\dots,m\}$. Denote $\PP^m(\mathcal{P})$ the set of all $\calP$-partitions of $\mathcal P$ of height $m$. Note that a $\calP$-partition of height $1$ corresponds to an order ideal.

In the next subsection, we provide preliminary definitions of increasing tableaux in order to make the connection between tableaux and order ideals.

\subsection{Young diagrams and tableaux}\label{subsec:YoungTabl}

Throughout this section let $\Lambda$ denote either the positive orthant $\mathbb{N} \times \mathbb{N}$ with order $(a_1,b_1) \leq (a_2,b_2)$ if $a_1 \leq a_2$ and $b_1 \leq b_2$, or the subset of the positive orthant $\{(a,b) \in \mathbb{N}^2| a \leq b \}$ with the induced order. We draw the positive orthant $\mathbb{N} \times \mathbb{N}$ in the French convention with
coordinates increasing from left-to-right and bottom-to-top. 
We refer to elements of
$\Lambda$ as boxes. Boxes with the same y-coordinate form a row, and
boxes with the same x-coordinate form a column.
A \textit{shape} is the set theoretic difference of boxes of the form $\lambda/\mu$ where $\mu \subsetneq \lambda$ are finite order ideals of $\Lambda$.  

An \textit{(integer) partition} $\lambda$ is a sequence $(\lambda_1,\cdots,\lambda_{\ell})$ of nonnegative integers with $\lambda_1\ge\cdots\ge\lambda_{\ell}$. 
Associated to a partition is its \textit{(ordinary) Young
diagram}, which is the shape that has $\lambda_i$ consecutive boxes in a row starting at
$(1,i)$ for $i=1,...,\ell$. (Recall that we use the French convention with boxes justified down and to
the left.) In this way partitions correspond to finite order ideals in the positive orthant.
%We refer to tableaux of ordinary Young diagram shapes as ordinary tableaux.
%A partition can be represented as an \emph{(ordinary) Young diagram}, which is the collection of cells (or boxes) aligned on a two dimensional lattice, where the $i$th row starts at $(1,i)$ and has $\lambda_i$ boxes. In this way, partitions correspond to finite order ideals in the positive orthant.  %We will follow the French convention of drawing Young diagrams; that is, justified down and to the left.  When $S$ is the line poset $1<2<\dots<m$ and $\Lambda$ is the positive orthant, we denote by $\IT^m(\lambda/\mu)$ the set of all increasing tableaux of shape $\lambda/\mu$.

%Let $X$ be a totally ordered set (we think of $X$ as an \emph{alphabet} and its elements as \emph{letters}). For a partition $\lambda$, an \textit{ordinary increasing tableau of shape $\lambda$ with alphabet $X$} is a filling of the Young diagram of $\lambda$ with letters from $X$ such that entries from each row (resp. column) are increasing from left to right (resp. from top to bottom). We denote by $\IT^m(\lambda)$ the set of all increasing tableaux of shape $\lambda$ with alphabet $1<2<\dots<m$.

A \textit{strict partition} $\lambda$ is a sequence $(\lambda_1,\cdots,\lambda_{\ell})$ of nonnegative integers with $\lambda_1>\cdots>\lambda_{\ell}$.
The \emph{shifted Young diagram} associated to the strict partition $\lambda$ is define similarly to its ordinary Young diagram. It has $\lambda_i$ consecutive boxes in a row starting at $(i,i)$ for $i=1,...,\ell$. In this way strict partitions correspond to finite order ideals in $\{(a,b) \in \mathbb{N}^2| a \leq b \}$.
%Then a \emph{shifted increasing tableau of shape $\lambda$ with alphabet $X$} is defined similarly by filling the shifted Young diagram of $\lambda$ in the same increasing manner.
%Similarly, we denote by $\SIT^m(\lambda)$ the set of shifted increasing tableaux with alphabet $1<2<\dots<m$. As all tableaux we consider will be increasing, we will often drop the adjective ``increasing'' from now on.

%For two partitions $\lambda=(\lambda_1,\lambda_2,\cdots)$ and $\mu=(\mu_1,\mu_2,\cdots)$, if $\mu_i\leq \lambda_i$ for $i$, then we define the \emph{skew diagram} of $\lambda\backslash\mu$ to be the set-theoretic difference of the Young diagrams of $\lambda$ and $\mu$. If $\lambda$ and $\mu$ are strict partitions, we define the \emph{skew shifted diagram} $\lambda/\mu$ similarly as the set-theoretic difference of the shifted Young diagrams of $\lambda$ and $\mu$. 
For two partitions $\lambda=(\lambda_1,\lambda_2,\cdots)$ and $\mu=(\mu_1,\mu_2,\cdots)$, if $\mu_i \leq \lambda_i$ for all $i$, then we define the \textit{skew
(shifted) diagram} of $\lambda/\mu$ to be the set-theoretic difference of the (shifted) Young diagrams of $\lambda$ and $\mu$.
Similarly, we call a shape $\lambda/\mu$ a \textit{skew shape} when $\mu \neq \emptyset$.  
%When displaying a skew shape, we often put a $\bullet$ in the cells belonging to $\mu$. 
For example, the following two diagrams are, respectively, the skew ordinary diagram $\lambda\backslash\mu$ for $\lambda=(4,4,4)$ and $\mu=(3,1)$, and the skew shifted diagram for $\lambda=(4,3,2)$ and $\mu=(3,1)$. In both examples we place $\bullet$'s in the boxes belonging to $\mu$.
\[\begin{ytableau}
\ &\ &\ &\ \\ \bullet  &\ &\ &\ \\\bullet &\bullet &\bullet &\ 
\end{ytableau}\quad\quad\quad\quad\quad \begin{ytableau}
\none &\none &\ &\ \\ \none  &\bullet &\ &\ \\\bullet &\bullet &\bullet &\ 
\end{ytableau}\]
We will call non-skew diagrams \emph{straight}. 

A \textit{filling} of shape $\lambda/\mu \subset \Lambda$ is a function $f : \lambda/\mu \to X$ for some set $X$.
An \emph{increasing tableaux} of shape $\lambda/\mu \subset \Lambda$ is a function $T : \lambda/\mu \to S$ for a partially ordered set $S$ such that whenever $x < y$ in $\Lambda$, we have $T(x) < T(y)$. As all tableaux we consider will be increasing, we
will often drop the adjective “increasing” from now on.  When not otherwise specified, $S$ is assumed to be $\{1<2<\dots\}$.

When $S$ is the totally order set $\{1<2<\dots<m\}$ and $\Lambda$ is the positive orthant, we denote by $\IT^m(\lambda/\mu)$ the set of all increasing tableaux of shape $\lambda/\mu$ and refer to these as \textit{ordinary tableaux}. When $S$ is the totally order set $\{1<2<\dots<m\}$ and $\Lambda = \{(a,b) \in \mathbb{N}^2| a \leq b \}$, we denote by $\SIT^m(\lambda/\mu)$ the set of all increasing tableaux of shape $\lambda/\mu$ and refer to these as \textit{shifted tableaux}. Figure \ref{fig:ord_shif_tab_example} gives examples of an ordinary and a shifted tableau.
\begin{figure}[h]
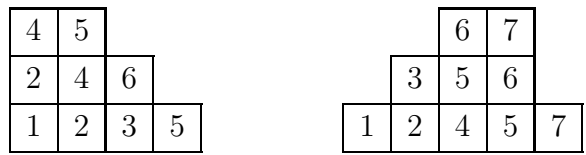

\centering
\[
\begin{ytableau}
4&5\\
2&4&6\\
1&2&3&5
\end{ytableau}\quad\quad\quad\begin{ytableau}
\none&\none&\none&6&7\\
\none&\none&3&5&6\\
\none&1&2&4&5&7
\end{ytableau}
\]
\caption{Left: an ordinary tableau of shape $(4,3,2)$, right: a shifted tableau of shape $(5,3,2)$.}
\label{fig:ord_shif_tab_example}
\end{figure}

%We define \emph{increasing tableaux} of (ordinary or shifted) skew shape in the obvious way. 

%Every (skew or straight) (ordinary or shifted) Young diagram is naturally a poset on its boxes, where a box is less than or equal to another box if it is weakly down and to the left of that box. In this way, we may apply all the poset-theoretic concepts from \Cref{subsec:poset} to Young diagrams as well. For instance, we represent $\calP$-partitions of Young diagrams as fillings of the cells, similar to increasing tableaux, but without the requirement that entries strictly increase. Via the identification of order ideals and $\calP$-partitions of height $1$, we represent order ideals of Young diagrams as $\{0,1\}$-fillings that weakly increase in rows and columns. Every Young diagram poset is ranked (though not necessarily graded); thus we can speak of the \emph{rank} of a cell of a Young diagram, or by abuse of notation the \emph{rank} of an entry of a tableau.

%From now on, we will represent our posets using a tableau-like Hasse diagram (\Cref{fig:rect_trap_example}), where elements are represented by boxes and covering relations are depicted by adjacency of boxes, and the southwest being the minimum. Viewing order ideals as height 1 $\calP$-partitions, we can represent an order ideal by filling the tableau-like Hasse diagram with numbers $0$ and $1$, where $0$ represent elements in the ideal and $1$ otherwise. Figure \ref{fig:ideal_vs_tableau} gives an example of an order ideal of $\mathscr R(3,5)$.

Every shape is naturally a poset its boxes with partial order induced from $\Lambda$.  In this way, we may apply all the poset theoretic concepts from \Cref{subsec:poset} to Young diagrams.  We may now talk about \textit{poset maps} which are fillings $f:\lambda/\mu \rightarrow S$ for a partially ordered set $S$ which respect the partial order of $\lambda/\mu$ (i.e. $f(x) \leq f(y)$ whenever $x \leq y$).
$\calP$-partitions of Young diagrams are examples of poset maps.  The rank function on $\Lambda$ descends to a rank function on any Young diagram, thus we may speak of the rank of a box in a Young diagram (where we always subtract the appropriate amount so the minimal rank of a box in a Young diagram is $1$).

Observe that the rectangle poset $\mathscr R(a,b)$ is the same as the ordinary Young diagram $\lambda = (\overbrace{b,b,\dots,b}^{a\text{ times}})$ and the trapezoid poset $\mathscr T(a,b)$ is the same as the shifted Young diagram $\lambda = (a+b-1,a+b-3,\dots,b-a+1)$. \Cref{fig:rect_trap_example} gives an example of this identification for $(a,b)=(3,5)$.

\begin{figure}[h]
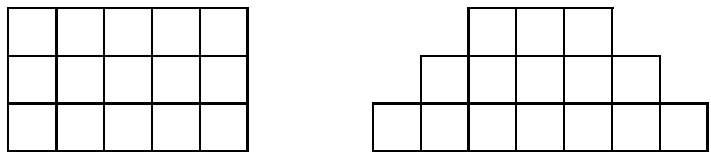

    \centering
    \ydiagram{5,5,5}
    \quad\quad\quad\quad
    \ydiagram{2+3,1+5,7}
    \caption{The posets $\mathscr R(3,5)$ and $\mathscr T(3,5)$, where the boxes are poset elements and edges in the Hasse diagram are replaced with adjacency relations.}
    \label{fig:rect_trap_example}
\end{figure}
\begin{remark}\label{rem:ordasshift}
The posets from the ordinary Young diagram $\lambda/\mu$ where $\lambda = (\lambda_n,\lambda_{n-1}, \dots, \lambda_1)$ and $\mu = (\mu_n,\mu_{n-1}, \dots, \mu_1)$ and the shifted Young diagram $\lambda'/\mu'$ where $\lambda' = (\lambda_n+(n-1),\lambda_{n-1}+(n-2), \dots, \lambda_1)$ and $\mu' = (\mu_n+n-1,\mu_{n-1}+n-2, \dots, \mu_1)$ are isomorphic. There is a bijection between $\IT^{\ell}(\lambda/\mu)$ and $\SIT^{\ell}(\lambda'/\mu')$ which realizes an ordinary tableau $T$ of shape $\lambda/\mu$ as the skew tableau $T'(x,y) = T(x+n,y)$. This observation will be important to the bijection $\varphi$.
\end{remark}

\Cref{fig:ideal_vs_tableau} depicts how an order ideal of the rectangle poset is viewed as a poset map $f:\lambda/\mu \rightarrow \{0,1\}$ of the corresponding Young diagram.

\begin{figure}[h]
\label{fig:ideal_vs_tableau}
\centering
\begin{tikzpicture}[scale=0.8,rotate=45]
	\foreach \i in {0,...,4}{
    \foreach \j in {0,1,2}
    	\filldraw (\i,\j) circle (1.6pt);}
    \foreach \t in {0,1,2}{
    \draw [-] (0,\t)--(4,\t);
    }
    \foreach \t in {0,...,4}{
    \draw [-] (\t,0)--(\t,2);
    }
    
    \foreach \i in {0,1,2,3}{
    \filldraw [red] (\i,0) circle (2.0 pt);
    }
    \foreach \i in {0,1,2}{
    \filldraw [red] (\i,1) circle (2.0 pt);
    }
    \foreach \i in {0}{
    \filldraw [red] (\i,2) circle (2.0 pt);
    }

\node at (7,-3.8) {$\begin{ytableau}
0 &1 & 1&1&1 \\
0& 0 & 0&1&1 \\
0 & 0 &0&0&1
\end{ytableau}$};
\end{tikzpicture}
\caption{On the left: an order ideal of the poset $\mathscr R(3,5)$ identified by the red color; on the right: the same order ideal in the tableau-like Hasse diagram.}
\label{fig:ideal_vs_tableau}
\end{figure}
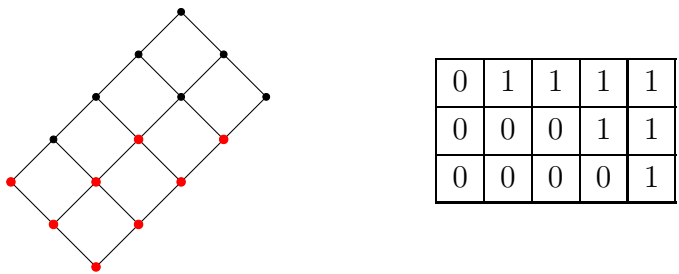

From now on, we will depict all posets as Young diagrams rather than Hasse diagrams, and will depict order ideals as $\{0,1\}$-poset maps of these Young diagrams. By abuse of notation, we denote by $\IT^{\ell}(\mathscr R(a,b))$ (resp. $\SIT^{\ell}(\mathscr T(a,b))$) the set of all ordinary (resp. shifted) tableaux with $S$ as the totally ordered set $1<2<\dots<\ell$ whose shape is the one corresponding to the poset $\mathscr R(a,b)$ (resp. $\mathscr T(a,b)$).

The \textit{minimal tableau} of a shape $\lambda/\mu$ is the tableau $T$ with $T(s) = \rank(s)$ for all boxes $s \in \lambda/\mu$. In analogy to this, we make the following definition.
\begin{definition}[Almost minimal tableaux]\label{def:almost}
	An \textit{almost minimal tableau} $T$ of shape $\lambda/\mu$ is an increasing tableau such that $T(s) - \rank(s) \in \{0,1\}$ for any boxes $s \in \lambda/\mu$. Equivalently, $T$ is obtained from an order ideal by adding rank to each entry.
	\end{definition}
\label{ideal_to_amt}
It follows from the definition that there is a bijection between the set of order ideals and the set of almost minimal tableau of that shape by adding and subtracting rank.
For example, the following tableau
\[\begin{ytableau}
3&5&6&7&8\\
2&3&4&6&7\\
1&2&3&4&6	
\end{ytableau}
\]
corresponds to the example in \Cref{fig:ideal_vs_tableau}.

\begin{remark}\label{rem:PPtoTableaux}
For an ordinary (resp. shifted) Young diagram $\lambda/\mu$ which is graded, the map which subtracts $\rank(s)$ from each entry $s$ is a bijection from $\IT^{\ell+r_{\max}}(\lambda/\mu)$ (resp. $\SIT^{\ell+r_{\max}}(\mathscr \lambda/\mu)$) to $\PP^{\ell}(\lambda/\mu)$ (This is essentially \cite[Theorem 4.7]{dilks2017resonance}; see also \cite[Section 6.2]{HPPW18}). In this situation, $\IT^{r_{\max}+1}$ (resp. $\SIT^{r_{\max}+1}$) is exactly the set of almost minimal tableaux of shape $\lambda/\mu$. Both the rectangle and trapezoid are graded posets, thus we obtain bijections \[\IT^{m+a+b-1}(\mathscr R(a,b)) \simeq \PP^m(\mathscr R(a,b))\]and \[ \SIT^{m+a+b-1}(\mathscr T(a,b)) \simeq \PP^m(\mathscr T(a,b)).\] These bijections are crucial for defining the bijection $\phi$ of \cite{HPPW18}.

%The set of almost minimal tableaux of a partition $\lambda=(\lambda_1,\cdots,\lambda_\ell)$ is simply $\IT^{r_{\max}+1}$ or $\SIT^{r_{\max}+1}$, where $r_{\max}$ denote the maximal rank in the tableau. In particular, one has  
%\[\begin{cases} r_{\max}=\max\{\lambda_1,\ell\}&\text{for ordinary Young diagrams,}\\ r_{\max}=\max\{\lambda_1+\ell-1,\ell\}+\ell-1&\text{for shifted Young diagrams.}\end{cases}\]
\end{remark}

\subsection{Rowmotion}\label{subsec:rowmotion}

Now we define the action of rowmotion.
\begin{definition}
Let $\mathcal P$ be a poset, and $I\in J(\mathcal P)$ an order ideal of $\mathcal P$. Then the rowmotion of $I$, denoted $\row(I)$ is the order ideal generated by the minimal elements that are not in $I$, i.e.
\[\row(I)=\langle a\in\mathcal P: a\in \min\{\mathcal P\setminus I\}\rangle\]
\end{definition}
\begin{example} Here we give an example of rowmotion on an order ideal of the rectangle poset. The minimal non-elements of the initial order ideal are colored {\color{red} red}.
\[
\begin{ytableau}
0 &{\color{red}1} & 1&1&1 \\
0& 0 & 0&{\color{red}1}&1 \\
0 & 0 &0&0&{\color{red}1}
\end{ytableau}
\xrightarrow{\quad\row\quad}
\begin{ytableau}
0 &{\color{red}0} & 1&1&1 \\
0& 0 & 0&{\color{red}0}&1 \\
0 & 0 &0&0&{\color{red}0}
\end{ytableau}
\]
	
\end{example}
From this definition, it is not evident that $\row: J(\calP) \to J(\calP)$ is in fact invertible. The invertibility of $\row$, however, follows from Cameron and Fon-der-Flaass' \cite{toggles} alternative description of $\row$ using \textit{toggles}.
\begin{prop}\label{prop:row_as_toggles}
	For $p\in\mathcal P$ and order ideal $I$, we define the \emph{toggle operation} of $p$ on $I$ as follows.
\[\tau_p(I)=\begin{cases}
I\cup p&\text{if }p\notin I\text{ and } I\cup p\in J(\mathcal P)\text{,}\\
I\setminus p&\text{if }p\in I\text{ and }I\setminus p\in J(\mathcal P)\text{,}\\
I&\text{otherwise.}
\end{cases}\]
Then rowmotion is just performing toggles `row by row' \footnote{Here `row' actually refers to a rank of a poset, which is not a row but a diagonal in our Young diagram notation.} from the largest to smallest, i.e.
$$\row(I) =\tau_{p_1}\circ\tau_{p_{n-1}} \circ\cdots\circ\tau_{p_n}(I)$$
where $p_1\le\cdots\le p_n$ is any linear extension of the poset $\mathcal P$.
\end{prop}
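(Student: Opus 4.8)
The plan is to follow the fate of each poset element individually through the toggle sequence and read off the answer from the definition of $\row$ at the very end. Fix a linear extension $p_1\le\cdots\le p_n$ and set $J:=\tau_{p_1}\circ\cdots\circ\tau_{p_n}(I)$, so that $\tau_{p_n}$ (a maximal element) is applied first and $\tau_{p_1}$ last. I would record two elementary facts about toggles: first, $\tau_q$ alters the membership of $q$ and of no other element, so once $\tau_{p_k}$ has been applied the status of $p_k$ is frozen for the remainder of the composition, whence $p_k\in J$ if and only if $p_k$ is in the ideal immediately after $\tau_{p_k}$ acts; second, whether $\tau_{p_k}$ adds, deletes, or fixes $p_k$ is dictated solely by the elements covering and covered by $p_k$. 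The crucial consequence of working in a linear extension is that when $\tau_{p_k}$ is applied, every $q>p_k$ has already been toggled — hence by the first fact already sits in its final $J$-state — while every $q<p_k$ has not yet been toggled and so still agrees with $I$.

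Next I would split on whether $p_k\in I$, writing $I'$ for the current ideal just before $\tau_{p_k}$ acts. If $p_k\notin I$, then $p_k\notin I'$, and $\tau_{p_k}$ inserts $p_k$ exactly when $I'\cup\{p_k\}$ is an ideal, i.e.\ when every element covered by $p_k$ lies in $I'$; as these lie below $p_k$ they still agree with $I$, so insertion occurs precisely when all lower covers of $p_k$ are in $I$, i.e.\ precisely when $p_k\in\min(\mathcal{P}\setminus I)$. Thus for $p_k\notin I$ we get $p_k\in J\iff p_k\in\min(\mathcal{P}\setminus I)$. If instead $p_k\in I$, then $\tau_{p_k}$ deletes $p_k$ exactly when $I'\setminus\{p_k\}$ is still an ideal, i.e.\ when no element covering $p_k$ lies in $I'$; since those covers sit above $p_k$ they are already in their final state, so $p_k$ survives into $J$ if and only if some cover $q\gtrdot p_k$ belongs to $J$.

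Finally I would confirm $J=\row(I)=\langle\min(\mathcal{P}\setminus I)\rangle$ by downward induction on rank, matching the description above with membership in $\langle\min(\mathcal{P}\setminus I)\rangle$. For $p\notin I$ the conditions coincide outright, because any $x\in\langle\min(\mathcal{P}\setminus I)\rangle$ with $x\notin I$ must itself be a minimal element of the complement (if $x<m$ for a minimal $m$, everything below $m$ lies in $I$, forcing $x\in I$). For $p\in I$ I would use that $p\in\langle\min(\mathcal{P}\setminus I)\rangle$ is equivalent to some cover of $p$ lying in $\langle\min(\mathcal{P}\setminus I)\rangle$ — pass to or from a saturated chain running up to a witnessing minimal complement element — which matches the recursion for $J$ once the inductive hypothesis identifies the $J$-status of the higher-rank covers with their $\row(I)$-status. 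The one point requiring care, and the main obstacle, is the deletion case: unlike insertion it is not resolved locally from $I$, but depends on the already-computed states of elements above $p_k$, which is exactly why the verification must run from the top rank downward. I would close by remarking that this argument yields $\row(I)$ for \emph{every} linear extension, and since $\row(I)$ does not depend on that choice, the composite is independent of it; equivalently, toggles at incomparable elements commute, so reordering within a rank leaves $J$ unchanged.
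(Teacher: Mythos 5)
Your proof is correct. Note, though, that the paper itself offers no proof of this proposition: it is quoted from Cameron and Fon-der-Flaass, so there is nothing to compare against line by line. What you supply is a complete, self-contained verification, and the key organizing idea --- that in the top-to-bottom toggle order, every element above $p_k$ is already frozen in its final state while every element below $p_k$ still agrees with $I$ --- is exactly the right invariant; it cleanly splits the analysis into the ``insertion'' case (decided locally from $I$, giving $p_k\in J\iff p_k\in\min(\mathcal P\setminus I)$) and the ``deletion'' case (a recursion on the covers, matched to $\langle\min(\mathcal P\setminus I)\rangle$ by the saturated-chain argument). Two small points of polish: the proposition is stated for an arbitrary finite poset, which need not be ranked, so your ``downward induction on rank'' should be phrased as induction along the reverse of the linear extension (or on the length of the longest chain above $p$) --- the argument is unchanged, since covers of $p$ always appear later in the linear extension; and you implicitly use that each intermediate set $I'$ is an ideal (so that checking the lower covers of $p_k$ suffices for $I'\cup\{p_k\}$ to be an ideal), which is immediate from the definition of $\tau_q$ but worth a sentence. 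Your closing observation that the composite is independent of the chosen linear extension because $\row(I)$ is, or equivalently because toggles at incomparable elements commute, is also the standard way this independence is justified.
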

Rowmotion is generalized by Eisenstein and Propp \cite{pwbr_row} to a piecewise linear action on $\calP$-partitions (or equivalently, order polytopes), in which toggles are given by a tropical exchange relation:
\[\tau(p)=\max\{a:a\lessdot p\}+\min\{b:p\lessdot b\}-p\]
%which first appeared as toggles on Gelfand-Tsetlin Patterns and Bender-Knuth moves \andy{awkward run-on sentence}. 
Rowmotion on order ideals is the same as piecewise linear rowmotion on the corresponding height-$1$ $\calP$-partition. We will discuss piecewise-linear rowmotion more in \Cref{subsec:PLrow}. 
%thus we do not distinguish the notation between classical and piecewise linear rowmotion.
%\syl{Maybe at the end a section discussing how hppw failed to commute with p.w.-Row and conjectures?}

\section{$K$-{\jdt} and the Bijection $\varphi$}\label{jdtandhppw}

In this section we describe the bijection $\phi$ between $\calP$-partitions of the rectangle $\mathscr R(a,b)$ and the trapezoid $\mathscr T(a,b)$ as well as other minuscule \dpg\ pairs shown in \Cref{fig:miniscule_pairs}. The construction is based on \emph{$K$-{\jdt}} slides of Thomas and Yong \cite{TY09}.

\subsection{$K$-{\jdt} Theory for Increasing tableaux} 
\begin{definition}\label{def:swap}
   Call two boxes $s,s'$ \textit{adjacent} if $s$ covers $s'$ or $s'$ covers $s$. We define the \emph{swap} of two entries $a,b$ in a filling $f$ to be the filling $\swap_{a,b}(f)$ such that for all $x \in \calP$: \[\swap_{a,b}(f)(x) = \begin{cases}a & \text{if }f(x) = b\text{ and there exists }y\text{ adjacent to }x \text{ such that }f(y)=a,\\b & \text{if }f(x) = a\text{ and there exists }y\text{ adjacent to }x \text{ such that }f(y)=b,\\ f(x) & \text{otherwise.} \end{cases}\]
\end{definition}

%When performing a swap , the resulting filling can be considered to still be increasing tableau, but with the position of $a$ and $b$ switched. 

Next, we can describe $K$-jeu-de-taquin as a sequence of swaps.

\begin{definition}\label{def:jdt}
    Let $T:\lambda/\mu \to \{1<2<\dots<\ell\}$ be an increasing tableaux. Let $C$ be some subset of maximal elements in $\mu$ and define $T\cup C:\lambda/\mu \cup C \to \{\bullet<1<2<\dots<\ell\}$ by 
    \[
        T\cup C(x) = \begin{cases}T(x) & x \not \in \mu \\
        \bullet & x \in C \end{cases}.
    \]
    The \emph{$K$-jeu-de-taquin forward slide} of $C$ is the tableaux 
\[\kjdt_C(T) := \left(\left(\prod_{b = 1}^\ell \swap_{\bullet,b}\right)(T \cup C)\right) \text{ with the $\bullet$s removed}.\]
   The \emph{$K$-jeu-de-taquin reverse slide} of a subset of minimal elements $C'$ in $\Lambda/\lambda$ is defined similarly by
    \[
        T\cup C'(x) := \begin{cases}T(x) & x \not \in \mu \\
        \bullet & x \in C' \end{cases}.
    \]
    and
    \[\widehat{\kjdt_{C'}}(T) := \left(\left(\prod\limits_{b = \ell}^1 \swap_{b,\bullet}\right)(T \cup C')\right)\textnormal{ with the $\bullet$s removed}.\]
    Both of the above are commonly referred to as \textit{$\kjdt$ slides}.
    %The resulting filling can still be increasing tableau with the ordering $1<2<\dots<a-1<a+1<a+2<\dots<\ell<a$. Alternatively, we could make the tableau increasing by replacing all instances of $a$ with $\ell$ and decrease all $b \in [a+1,\ell]$ by one. However, unless stated otherwise, it is assumed that we don't relabel the entries. Notice this procedure can be done whenever $S$ is a finite totally ordered set.
\end{definition}
We define the bijection $\phi$ for the case of rectangle and trapezoid. For the definition of $\varphi$ for other \dpg\ pairs, see \cite[Section 6]{HPPW18}.

\begin{definition}\label{def:hppw_bijection}
Given an increasing tableau $T \in \IT^{\ell}(\mathscr R(a,b))$ of the rectangle, one obtains an increasing tableau $\phi(T) \in \SIT^{\ell}(\mathscr T(a,b))$ as follows:
\begin{enumerate}
    \item Realize $T$ as a the skew shifted tableaux $T'$ as in \Cref{rem:ordasshift} (with $n=a$).
    \item Continually perform $\kjdt$ forward slides with $C$ as all maximal elements of the skew part until the resulting shifted tableaux is straight.
\end{enumerate}
    In other words, let $S$ be the minimal shifted tableaux of shape $(a-1,a-2,\dots,1)$ where we overline the entries of $S$ to distinguish them from the entries of $T$. Then
    \[
        \phi(T) = \kjdt_{S^{-1}(\overline{1})} \circ \kjdt_{S^{-1}(\overline{2})} \circ \hdots \circ \kjdt_{S^{-1}(\overline{2a-3})}(T').
    \]

\end{definition}

In \cite{HPPW18}, Hamaker, Patrias, pechenik and Williams proved that $\phi$ is indeed a bijection between $\IT^{\ell}(\mathscr R(a,b))$ and $\SIT^{\ell}(\mathscr T(a,b))$.  Furthermore, via the bijections $\PP^{m}(\mathscr{R}(a,b)) \simeq
\IT^{m+a+b-1}(\mathscr{R}(a,b))$ and $\PP^{m}(\mathscr T(a,b)) \simeq \SIT^{m+a+b-1}(\mathscr T(a,b))$ discussed in
\Cref{rem:PPtoTableaux}, we view $\phi$ as a bijection from $\PP^{m}(\mathscr{R}(a,b))$ to $\PP^{m}(\mathscr T(a,b))$ for all $m$.
In particular, specializing to the case $m=1$, $\phi$ is a bijection from $J(\mathscr{R}(a,b))$ to $J(\mathscr T(a,b))$.

\Cref{fig:HPPW_example} gives an example of $\phi$.
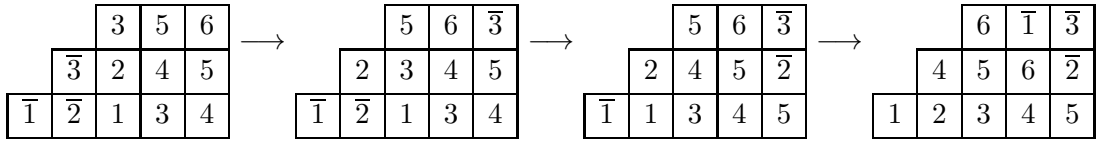
\begin{figure}[h]
    \centering
    \begin{tikzpicture}\small
    \node at (0,0) {
    \tableau{\none&\none&3&5&6\\\none&\bar{ 3}&2&4&5\\\bar{1}&\bar{2}&1&3&4}
    $\longrightarrow$
    \tableau{\none&\none&5&6&\overline{3}\\\none&2&3&4&5\\\bar{1}&\bar{2}&1&3&4}
    $\longrightarrow$
    \tableau{\none&\none&5&6&\overline{3}\\\none&2&4&5&\bar{2}\\\bar{1}&1&3&4&5}
    $\longrightarrow$
    \tableau{\none&\none&6&\bar{1}&\overline{3}\\\none&4&5&6&\bar{2}\\1&2&3&4&5}
    };
    \end{tikzpicture}
    \caption{The map $\phi$ for an order ideal $I \in J(\mathscr T(3,3))$}
    \label{fig:HPPW_example}
\end{figure}

We are interested in how $\phi$ interacts with the action of rowmotion. In the case of order ideals, we prove the following theorem.

\setcounter{section}{1}
\setcounter{restate}{0}
\begin{restate}\label{thm:main}
    For any minuscule \dpg\ pair (\Cref{fig:miniscule_pairs}) \[(P,Q) \in \{(\mathscr R(a,b),\mathscr T({a,b}),(OG(6,12),H_3),(\Q^{2n},I_2(2n))\},\] the map $\phi$ commutes with rowmotion on order ideals of the pair. In other words, we have a commutative diagram:
    \[\begin{tikzcd}
    J(P) \ar{r}{\phi} \ar{d}{\row} & J(Q) \ar{d}{\row}\\ J(P) \ar{r}{\phi} & J(Q)
    \end{tikzcd}\]
\end{restate}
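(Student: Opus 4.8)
The plan is to reduce the desired commutation to a statement about \emph{weak $K$-Knuth equivalence} and then invoke \Cref{shapeunique}. Fix the hardest pair $(P,Q)=(\mathscr R(a,b),\mathscr T(a,b))$ and an ideal $I\in J(\mathscr R(a,b))$, identified with its almost minimal ordinary tableau via \Cref{def:almost}. Both $\phi(\row(I))$ and $\row(\phi(I))$ are almost minimal \emph{shifted} tableaux of the \emph{same} straight shape $\mathscr T(a,b)$ (the first because $\phi$ lands in $\SIT(\mathscr T(a,b))$, the second because rowmotion preserves the poset and the almost-minimal identification is a bijection). Hence, by the shifted case of \Cref{shapeunique}, it suffices to prove that these two tableaux are weak $K$-Knuth equivalent; the theorem then follows from uniqueness of the tableau within its class.

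To produce this equivalence I would exploit two facts in tandem: that $\phi$ is by construction a composition of $\kjdt$ forward slides (\Cref{def:hppw_bijection}), and that $\kjdt$ slides preserve weak $K$-Knuth equivalence (Buch--Samuel \cite{buch2016k}). Writing $m'(I)$ for the skew shifted realization of the almost minimal tableau of $I$ obtained in \Cref{rem:ordasshift}, slide-invariance gives $\phi(I)\equiv m'(I)$ and $\phi(\row(I))\equiv m'(\row(I))$ in the weak $K$-Knuth sense. The decisive observation is that the skew shifted shape underlying $m'(\cdot)$ is poset-isomorphic to $\mathscr R(a,b)$ (again \Cref{rem:ordasshift}), so rowmotion on the rectangle ideal $I$ \emph{is} rowmotion on this skew realization, and $\row(m'(I))=m'(\row(I))$. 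The remaining task is therefore to show $\row(\phi(I))\equiv \row(m'(I))$; combining this with the displayed equivalences yields $\row(\phi(I))\equiv\phi(\row(I))$, as desired. In other words, the content is that rowmotion commutes with the $K$-rectification carried out by $\phi$.

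The main obstacle is precisely this compatibility step, because rowmotion genuinely changes the weak $K$-Knuth class of a tableau and so cannot be handled by slide-invariance alone. My approach is to realize rowmotion itself inside the $\kjdt$ framework: using the piecewise-linear toggle description (\Cref{prop:row_as_toggles}) together with the identification $\PP^1\simeq\IT^{r_{\max}+1}$ of \Cref{rem:PPtoTableaux}, I would express rowmotion as a $K$-promotion $\kpro$ assembled from $K$-Bender--Knuth moves $\kbk$, following the resonance framework of Dilks--Pechenik--Striker \cite{dilks2017resonance}. Each such move is a local $\kjdt$ operation, so promotion interacts with the rectifying slides of $\phi$ through the confluence (order-independence) of $\kjdt$ rectification; pushing the two realizations $\phi(I)$ and $m'(I)$ through a common sequence of slides should then force $\row(\phi(I))\equiv\row(m'(I))$. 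The careful bookkeeping between ordinary and shifted tableaux, and between $K$-Knuth and weak $K$-Knuth equivalence, is delicate and is where I expect the real work to lie.

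Finally, for the two remaining doppelg\"anger pairs $(OG(6,12),H_3)$ and $(\mathbb{Q}^{2n},I_2(2n))$, the same strategy applies but is much lighter. These posets are small---the dihedral family $I_2(2n)$ reduces essentially to a two-row shape, while $H_3$ and $OG(6,12)$ are fixed finite posets---so after setting up the analogous $\phi$ built from $\kjdt$ slides one can either run the identical weak $K$-Knuth argument or verify the commuting square directly by a finite computation on orbits.
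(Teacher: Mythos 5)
Your overall reduction matches the paper's: identify order ideals with almost minimal tableaux, realize rowmotion as (inverse) $K$-promotion in the sense of \cite{dilks2017resonance}, and use the fact that almost minimal tableaux of a fixed straight shape lie in distinct (weak) $K$-Knuth classes (\Cref{shapeunique}, packaged as \Cref{lem:hppwuniquematch}) to pin down the answer; your treatment of the pairs $(OG(6,12),H_3)$ and $(\mathbb{Q}^{2n},I_2(2n))$ is also essentially what the paper does. However, there is a genuine gap at the step you yourself flag as decisive: showing that (inverse) rowmotion preserves the weak $K$-Knuth equivalence between the rectangle tableau $T$ and its trapezoid partner $T'=\phi(T)$. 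Your proposed mechanism --- ``confluence (order-independence) of $\kjdt$ rectification'' --- is not available in the $K$-theoretic setting: $K$-rectification genuinely depends on the order of the slides (this is precisely why unique rectification targets are a nontrivial notion in \cite{TY09} and \cite{buch2016k}), so one cannot push two equivalent tableaux ``through a common sequence of slides'' and conclude anything about the outputs of $K$-promotion. Moreover, the change of equivalence class under $K$-promotion is not caused by the slides at all (those preserve equivalence); it is caused by steps 1 and 3 of \Cref{def:k-pro}, namely deleting the minimal entries and re-inserting maximal entries, and your outline supplies no argument for either.

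The paper closes this gap with two specific tools. Writing $T_1,T_2,T_3$ for the results of the successive steps of $K$-promotion on $T$ (and $T_i'$ for $T'$), the deletion of the $1$'s is handled by the Buch--Samuel restriction lemma (\Cref{lem:ResInterval}): if $T$ and $T'$ are $\kjdt$ equivalent then so are $T|_{[2,r_m+1]}$ and $T'|_{[2,r_m+1]}$, hence $T_1\equiv T_1'$ and, after the rectification of step 2, $T_2\equiv T_2'$. The re-insertion of the maximal entries in step 3 is then controlled by the minimal-ideal invariant: $T_3$ is $\kjdt$ equivalent to some almost minimal trapezoid tableau $T^*$, the restrictions $T^*|_{[1,r_m]}$ and $T_3'|_{[1,r_m]}=T_2'$ are equivalent, and \Cref{cor:intervalImpliesEqual} (which rests on the Hecke-permutation argument behind \Cref{thm:kjdtequivIdeal}) forces $T^*=T_3'$. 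In other words, \Cref{shapeunique} is needed not only at the very end, as in your plan, but in the middle of the argument, to determine which maximal entries are added; without \Cref{lem:ResInterval} and \Cref{cor:intervalImpliesEqual} the proof does not go through.
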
%use a better name than hppw
\setcounter{section}{3}

\begin{figure}[htbp]
\begin{center}
\begin{tabular}{|cc|cc|} \hline 
	
	Poset Name & Hasse Diagram & Hasse Diagram  & Poset Name \\  \hline \hline 
	$\Lambda_{\Gr(k,n)}
=\mathscr R(k,n-k)$
  & \raisebox{-0.5\height}{\begin{tikzpicture}[scale=.35]
	\draw[thick] (1 cm,3) -- (2 cm,4) -- (1 cm,5) -- (0cm,4)--(1cm,3);
    \draw[thick] (3,1)--(4,2);
    \draw[thick] (3,7)--(4,6);
    \draw[thick,dotted] (1,3)--(3,1);
    \draw[thick,dotted] (2,4)--(4,2);
    \draw[thick,dotted] (1,5)--(4,2);
    \draw[thick,dotted] (4,6)--(6,4);
    \draw[thick,dotted] (4,2)--(6,4);
    \draw[thick,dotted] (5,3)--(2,6);
    \draw[thick,dotted] (2 cm,2) -- (5 cm,5);
    \draw[thick,dotted] (2 cm,4) -- (4 cm,6);
    \draw[thick,dotted] (1 cm,5) -- (3 cm,7);

    \draw[thick,solid,fill=white] (0cm,4) circle (.3cm) ;
    \draw[thick,solid,fill=white] (1cm,3) circle (.3cm) ;
    \draw[thick,solid,fill=white] (3cm,7) circle (.3cm) ;
    \draw[thick,solid,fill=white] (3cm,1) circle (.3cm) ;
    \draw[thick,solid,fill=white] (2cm,4) circle (.3cm) ;
    \draw[thick,solid,fill=white] (1cm,5) circle (.3cm) ;
    \draw[thick,solid,fill=white] (4cm,2) circle (.3cm) ;
    \draw[thick,solid,fill=white] (4cm,6) circle (.3cm) ;
    \draw[thick,solid,fill=white] (6cm,4) circle (.3cm) ;
    \draw [decorate,decoration={brace,amplitude=10pt}] (7,3.5) -- (3.5,0) node [midway, below,yshift=-0.8em,xshift=1.2em] {$n-k$};
    \draw [decorate,decoration={brace,amplitude=10pt}] (2.5,0) -- (-1,3.5) node [midway, below,yshift=-0.8em,xshift=-1.2em] {$k$};
\end{tikzpicture}} & 
\raisebox{-0.5\height}{\begin{tikzpicture}[scale=.35,rotate=270]
	\draw[thick] (4 cm,2) -- (5 cm,3)--(6,2);
    \draw[thick,dotted] (0 cm,4) -- (2 cm,6);
	\draw[thick] (0 cm,6) -- (1 cm,7);
    \draw[thick] (2 cm,6) -- (0 cm,8);
	%\draw[thick,dotted] (1 cm,5) -- (0 cm,6);
    \draw[thick,dotted] (5,3) -- (2,6);
    \draw[thick,dotted] (4,2) -- (0,6);
    \draw[thick,dotted] (3,5) -- (1,3);
    \draw[thick,dotted] (2,2) -- (0,4);
    \draw[thick,dotted] (1,5) -- (0,4);
    \draw[thick,dotted] (2,2) -- (4,4);
 
    \draw[thick,solid,fill=white] (2cm,2) circle (.3cm) ;
    \draw[thick,solid,fill=white] (0cm,6) circle (.3cm) ;
    \draw[thick,solid,fill=white] (0cm,8) circle (.3cm) ;
    \draw[thick,solid,fill=white] (0cm,4) circle (.3cm) ;
    \draw[thick,solid,fill=white] (1cm,7) circle (.3cm) ;
    \draw[thick,solid,fill=white] (4cm,2) circle (.3cm) ;
    \draw[thick,solid,fill=white] (2cm,6) circle (.3cm) ;
    \draw[thick,solid,fill=white] (5cm,3) circle (.3cm) ;
    \draw[thick,solid,fill=white] (6cm,2) circle (.3cm) ;
    \draw [decorate,decoration={brace,amplitude=10pt}] (1.5,1) -- (-1,3.5) node [midway, below,yshift=1.7em,xshift=-2em] {\scalebox{0.8}{$n-2k+1$}};
    \draw [decorate,decoration={brace,amplitude=10pt}] (0.5,9) -- (7,2.5) node [midway, above,yshift=-2em,xshift=1.2em] {$n-1$};
\end{tikzpicture}} & $\mathscr T(k,n-k)$  \\ \hline
 $\Lambda_{OG(6,12)}$ &  \raisebox{-0.5\height}{\begin{tikzpicture}[scale=.25]
    \draw[thick] (3 cm,5) -- (5 cm,7);
	\draw[thick] (3 cm,7) -- (5 cm,9);
    \draw[thick] (2 cm,8) -- (5 cm,11);
    \draw[thick] (1 cm,9) -- (4 cm,12);
    \draw[thick] (3 cm,13) -- (5 cm,11);
    \draw[thick] (3cm,11) -- (5 cm,9);
    \draw[thick] (2cm,10) -- (5cm,7);
    \draw[thick] (1cm,9) -- (4cm,6);
    \draw[thick,solid,fill=white] (3cm,5) circle (.3cm) ;
    \draw[thick,solid,fill=white] (4cm,6) circle (.3cm) ;
    \draw[thick,solid,fill=white] (5cm,7) circle (.3cm) ;
    \draw[thick,solid,fill=white] (3cm,7) circle (.3cm) ;
    \draw[thick,solid,fill=white] (4cm,8) circle (.3cm) ;
    \draw[thick,solid,fill=white] (5cm,9) circle (.3cm) ;
    \draw[thick,solid,fill=white] (2cm,8) circle (.3cm) ;
    \draw[thick,solid,fill=white] (3cm,9) circle (.3cm) ;
    \draw[thick,solid,fill=white] (4cm,10) circle (.3cm) ;
    \draw[thick,solid,fill=white] (5cm,11) circle (.3cm) ;
    \draw[thick,solid,fill=white] (1cm,9) circle (.3cm) ;
    \draw[thick,solid,fill=white] (2cm,10) circle (.3cm) ;
    \draw[thick,solid,fill=white] (3cm,11) circle (.3cm) ;
    \draw[thick,solid,fill=white] (4cm,12) circle (.3cm) ;
    \draw[thick,solid,fill=white] (3cm,13) circle (.3cm) ;
\end{tikzpicture}} & \raisebox{-0.5\height}{\begin{tikzpicture}[scale=.25]
    \draw[thick] (3 cm,9) -- (6 cm,12);
    \draw[thick] (1 cm,9) -- (5 cm,13);
    \draw[thick] (3 cm,13) -- (4 cm,14);
    \draw[thick] (1 cm,17) -- (6 cm,12);
    \draw[thick] (3 cm,13) -- (5 cm,11);
    \draw[thick] (3cm,11) -- (5 cm,9);
    \draw[thick] (2cm,10) -- (3cm,9);
    \draw[thick,solid,fill=white] (5cm,9) circle (.3cm) ;
    \draw[thick,solid,fill=white] (3cm,9) circle (.3cm) ;
    \draw[thick,solid,fill=white] (4cm,10) circle (.3cm) ;
    \draw[thick,solid,fill=white] (5cm,11) circle (.3cm) ;
    \draw[thick,solid,fill=white] (6cm,12) circle (.3cm) ;
    \draw[thick,solid,fill=white] (1cm,9) circle (.3cm) ;
    \draw[thick,solid,fill=white] (2cm,10) circle (.3cm) ;
    \draw[thick,solid,fill=white] (3cm,11) circle (.3cm) ;
    \draw[thick,solid,fill=white] (4cm,12) circle (.3cm) ;
    \draw[thick,solid,fill=white] (5cm,13) circle (.3cm) ;
    \draw[thick,solid,fill=white] (3cm,13) circle (.3cm) ;
    \draw[thick,solid,fill=white] (4cm,14) circle (.3cm) ;
    \draw[thick,solid,fill=white] (3cm,15) circle (.3cm) ;
    \draw[thick,solid,fill=white] (2cm,16) circle (.3cm) ;
    \draw[thick,solid,fill=white] (1cm,17) circle (.3cm) ;
\end{tikzpicture}} & $\Phi^+_{H_3}$  \\ \hline
$\Lambda_{\mathbb Q^{2n}}$ & \raisebox{-0.5\height}{\begin{tikzpicture}[scale=.35]
	\draw[thick,dotted] (1 cm,1) -- (3 cm,3);
    \draw[thick] (3 cm,3) -- (4 cm,4);
    \draw[thick] (2 cm,4) -- (3 cm,5);
    \draw[thick,dotted] (3 cm,5) -- (5 cm,7);
    \draw[thick] (2 cm,4) -- (3 cm,3);
    \draw[thick] (3 cm,5) -- (4 cm,4);
    \draw[thick,solid,fill=white] (1cm,1) circle (.3cm) ;
    \draw[thick,solid,fill=white] (5cm,7) circle (.3cm) ;
    %\draw[thick,solid,fill=white] (2cm,2) circle (.3cm) ;
    \draw[thick,solid,fill=white] (3cm,3) circle (.3cm) ;
    \draw[thick,solid,fill=white] (4cm,4) circle (.3cm) ;
    \draw[thick,solid,fill=white] (2cm,4) circle (.3cm) ;
    \draw[thick,solid,fill=white] (3cm,5) circle (.3cm) ;
    %\draw[thick,solid,fill=white] (4cm,6) circle (.3cm) ;
    \draw [decorate,decoration={brace,amplitude=10pt}] (5,3.5) -- (1.5,0) node [midway, below,yshift=-0.8em,xshift=1.2em] {$n$};
    \draw [decorate,decoration={brace,amplitude=10pt}] (1,4.5) -- (4.5,8) node [midway, above,yshift=0.7em,xshift=-1.3em] {$n$};
\end{tikzpicture}} &
\raisebox{-0.5\height}{\begin{tikzpicture}[scale=.35]
    \draw[thick] (2 cm,4) -- (4 cm,6);
    \draw[thick,dotted] (4 cm,6) -- (7 cm,9);
    \draw[thick] (3 cm,5) -- (4 cm,4);
    \draw[thick,solid,fill=white] (4cm,4) circle (.3cm) ;
    \draw[thick,solid,fill=white] (2cm,4) circle (.3cm) ;
    \draw[thick,solid,fill=white] (3cm,5) circle (.3cm) ;
    \draw[thick,solid,fill=white] (4cm,6) circle (.3cm) ;
    %\draw[thick,solid,fill=white] (5cm,7) circle (.3cm) ;
    %\draw[thick,solid,fill=white] (6cm,8) circle (.3cm) ;
    \draw[thick,solid,fill=white] (7cm,9) circle (.3cm) ;
    \draw [decorate,decoration={brace,amplitude=10pt}] (1,4.5) -- (6.5,10) node [midway, above,yshift=0.7em,xshift=-1.3em] {\scalebox{0.8}{$2n-1$}};
\end{tikzpicture}} & $\Phi^+_{I_2(2n)}$ \\ \hline%\\ \hline
%A & $\Lambda_{\LG(n,2n)}$ & shifted $n$-staircase &		$n$-staircase &			$\Phi^+_{A_n)$   \\ \hline
\end{tabular}
\end{center}
\caption{The names and Hasse diagrams of the three \dpg\ pairs considered in \cite{HPPW18}, adapted from \cite[Figure 1]{HPPW18}}
\label{fig:miniscule_pairs}
\end{figure}

%\andy{whose conjecture is this?--Answer(Sam's)}

Although the main theorem considers three minuscule \dpg\ pairs, the only difficulty comes from the case of the rectangle and the trapezoid. We present a proof of the other cases here, and defer the main proof to \Cref{proofmain}.

\begin{proof}[Proof for the cases of $(OG(6,12),H_3)$ and $(\Q^{2n},I_2(2n))$]

The case $(OG(6,12),H_3)$ amounts to checking finitely many applications of $\phi$ and rowmotion, which we have carried out using a computer program. For the other case, note that each of the posets $\mb Q^{2n}$ and $I_2(2n)$ only has $2$ rowmotion orbits, one of size $2n-2$ and the other of size $2$. If $I \in  J(\mb Q^{2n})$ is in the orbit of size $2$, then $\phi(I)$ is also in the orbit of size $2$ in $J(I_2(2n))$, hence rowmotion commutes with $\phi$ for such $I$. For any ideal $I$ in the other rowmotion orbit of $J(\mb Q^{2n})$, $I$ consists of all elements of $J(\mb Q^{2n})$ of rank $\leq m$ for some $m$. In this case, it can be seen that $\phi(I)$ is the set of elements of $J(I_2(n))$ of rank $\leq m$, which is the same $m$ as for $I$. Observe that for a graded poset $\calP$ with all maximal elements of $\calP$ having rank $m+1$, and the ideal $I \subseteq \calP$ consisting of all elements $\leq m$, the minimal elements of $\calP \setminus I$ are the elements of rank $m+1$, thus $\row(I)$ is either the ideal consisting of all elements of a poset $\calP$ of rank $\leq m+1$ or the empty ideal. Since $\mb Q^{2n}$ and $I_2(2n)$ are graded posets and both have the same maximal rank, we conclude that $\phi$ commutes with rowmotion on $(\mb Q^{2n},I_2(2n))$
\end{proof}

\subsection{Rowmotion via $K$-\jdt}
We can describe rowmotion as a composition of $\kjdt$ slides. This is done in \cite{dilks2017resonance} under the name $K$-promotion,
which is the $K$-theoretic analogue of Sch\"utzenberger's promotion action on standard Young tableaux.
\begin{definition}[$K$-promotion]\label{def:k-pro}
Fix a shape $\lambda/\mu$.
For an ordinary or shifted tableau $T \in \IT^{\ell}(\lambda/\mu)$ or $T \in \SIT^{\ell}(\lambda/\mu)$,
%of shape $\lambda/\mu$ with all maximal elements of the same rank $r$ (for rectangle and trapezoid, $r=a+b-1$), 
the \emph{$K$-promotion} of $T$ is a tableaux $\kpro(T) \in \IT^{\ell}(\lambda/\mu)$ or $\kpro(T) \in \SIT^{\ell}(\lambda/\mu)$ defined as follows.
\begin{enumerate}[1)]
    \item Turn the tableau into a tableau $T_1$ of shape $\lambda/(\mu \cup T^{-1}(1))$ by
    removing the minimal entry $1$ and subtracting $1$ from all other entries.
    \item Send
    \[
    T_1 \mapsto \kjdt_{T^{-1}(1)}(T_1).
    \]
    This is called \textit{$K$-rectifying} $T_1$ within $\lambda/\mu$.  Call the resulting tableau $T_2$.
    \item Add $\ell$ to boxes such that the resulting tableaux is the shape $\lambda/\mu$. In other words, $K$-pro$(T)$ is the tableau with shape $\lambda/\mu$ with
    \[
        \kpro(T) = \begin{cases} 
        T_2(s) & s \textnormal{ is in the domain of } T_2 \\
        \ell & \textnormal{otherwise}
        \end{cases}.
    \]
\end{enumerate}
In the case where there does not exist a $1$ in the tableau, $K$-promotion simply decrements each entry by $1$.
\end{definition}

%To progress to proving Theorem~\ref{thm:commute}, we would now like to consider how rowmotion affects $\kjdt$ equivalence. To do this 
Recall from \Cref{rem:PPtoTableaux} that for (ordinary or shifted) shapes that are
graded, almost minimal
tableaux are exactly the tableaux in $\IT^{r_{max}+1}(\lambda/\mu)$ or $\SIT^{r_{max}+1}(\lambda/\mu)$. Rowmotion is related to
$K$-jeu-de-taquin via the following lemma:
\begin{lemma}
%\cite{dilks2017resonance}
\label{lem:k-pro}
For an ordinary (resp. shifted) Young diagram $\lambda/\mu$ which is graded as a poset, $K$-promotion on $\IT^{r_{max}+1}(\lambda/\mu)$ (resp. $\SIT^{r_{max}+1}(\lambda/\mu)$) is equivariant to the inverse action of rowmotion on the corresponding order ideals.
\end{lemma}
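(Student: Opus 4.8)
\textbf{Proof proposal for \Cref{lem:k-pro}.}

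The plan is to unwind both sides of the claimed equivariance into operations on almost minimal tableaux and match them step by step. First I would set up the correspondence precisely: by \Cref{rem:PPtoTableaux}, for a graded shape $\lambda/\mu$ the set $\IT^{r_{\max}+1}(\lambda/\mu)$ (resp.\ $\SIT^{r_{\max}+1}(\lambda/\mu)$) is exactly the set of almost minimal tableaux, and by \Cref{def:almost} subtracting $\rank$ sends an almost minimal tableau $T$ to the $\{0,1\}$-poset map $f_T$ that is the indicator of the complement of an order ideal $I$. So it suffices to show that if $T$ corresponds to the ideal $I$, then $\kpro(T)$ corresponds to $\row^{-1}(I)$; equivalently, applying $\kpro$ to the almost minimal tableau realizes $\row^{-1}$ on the associated ideal.

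The key step is to analyze what $\kpro$ does to an almost minimal tableau one rank at a time, and compare it with the toggle description of rowmotion from \Cref{prop:row_as_toggles}. Concretely, I would examine the three stages of \Cref{def:k-pro}. In stage (1), removing the entries equal to $1$ and decrementing corresponds, after subtracting rank, to identifying which minimal boxes carry the ``active'' value; in stage (2), the single $\kjdt_{T^{-1}(1)}$ reverse-type rectification performs a cascade of swaps that, on the level of the $\{0,1\}$-filling, is precisely a sequence of toggles sweeping from low rank to high rank; and in stage (3), refilling the vacated outer boxes with $\ell$ records the new maximal antichain. The heart of the argument is a rank-by-rank bookkeeping lemma: because $T$ is almost minimal, every entry equals either its rank or its rank plus one, so a box of rank $r$ holds value $r$ or $r+1$, and the $K$-jdt swap between the $\bullet$ and the value $b$ fires exactly when the corresponding toggle $\tau_p$ is non-trivial. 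I would make this explicit by showing that each elementary $\swap_{\bullet,b}$ in the product defining $\kjdt$ changes the indicator filling in the same way as toggling all poset elements of a fixed rank, so that the full $\kjdt$ slide equals the composite of toggles $\tau_{p}$ taken from smallest rank to largest. Matching this composite against the toggle order in \Cref{prop:row_as_toggles} (which builds $\row$ from largest to smallest rank) yields $\row^{-1}$ rather than $\row$, accounting for the ``inverse'' in the statement.

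The main obstacle I anticipate is controlling the $K$-theoretic subtlety of the swaps: unlike classical jeu-de-taquin, a $K$-jdt slide can leave a $\bullet$ behind or merge it across several boxes (the idempotent behavior encoded in \Cref{def:swap}), so a single box of a given rank may be touched more than once, and one must verify that this multiplicity still produces exactly the toggle action and does not over- or under-count. I would handle this by restricting attention to the almost minimal range, where the tight constraint $T(s)-\rank(s)\in\{0,1\}$ severely limits how entries can interact: within any rank, the values are either all equal to the rank or to rank-plus-one, so each $\swap_{\bullet,b}$ acts cleanly on a single rank-level and the potentially problematic $K$-theoretic coincidences cannot arise. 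Once this localization is established, the remaining verification is the routine identification of the swap sequence with the toggle sequence, and the lemma follows. As a sanity check I would run the stated equivariance through the $\mathscr{R}(2,2)$ example from the introduction, confirming that $\kpro$ reproduces the $\row^{-1}$ orbit structure displayed there.
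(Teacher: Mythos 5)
Your overall framing (identify almost minimal tableaux with order ideals via $\Psi$, then match $\kpro$ against the toggle description of $\row^{-1}$ rank by rank) is the right one, but the central step of your argument fails. You claim that each elementary $\swap_{\bullet,b}$ in the single $\kjdt$ slide of Definition~\ref{def:k-pro} ``changes the indicator filling in the same way as toggling all poset elements of a fixed rank.'' This is false, already in the paper's Example~\ref{ex:K-pro}: there the ideal is $I=\{(1,1),(1,2),(1,3),(2,1)\}$, the rank-$1$ toggle $H_1$ acts as the identity (the unique rank-$1$ box cannot be toggled out because boxes above it lie in $I$), yet $\swap_{\bullet,1}$ is far from trivial --- it moves the $\bullet$ from the rank-$1$ box into \emph{both} rank-$2$ boxes. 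The positions of the $\bullet$s during the slide are simply not tracking the intermediate ideals $H_i\circ\cdots\circ H_1(I)$ under any shift of indexing, so a swap-by-swap identification cannot be carried out. The ``localization'' you offer in support is also incorrect: almost minimality says each box of rank $r$ carries $r$ or $r+1$, but boxes of the same rank can carry different offsets (e.g.\ the rank-$4$ boxes of the tableau following Definition~\ref{def:almost} carry $4,4,5$), and in the decremented tableau $T_1$ the value $b$ occupies boxes of two different ranks ($b$ and $b+1$), so $\swap_{\bullet,b}$ genuinely mixes ranks.

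What is needed to bridge this gap is precisely the ingredient the paper imports from Dilks--Pechenik--Striker: the decomposition $\kpro = \kbk_{\ell-1}\circ\cdots\circ\kbk_1$ of $K$-promotion into $K$-Bender--Knuth involutions (\cite[Proposition 2.5]{dilks2017resonance}). Unlike the individual jeu-de-taquin swaps, the involutions $\kbk_i$ \emph{do} intertwine with the rank-level toggle products $H_i$ under $\Psi$, and the paper verifies this by a short five-case analysis on $\rank(s)$ and $f(s)$. If you do not want to cite that decomposition, you would have to prove an equivalent statement yourself, which is a genuine (if routine) induction on the slide and not the ``clean single-rank action'' your sketch asserts. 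I recommend restructuring the proof around the $\kbk_i = H_i$ comparison rather than around the raw swaps.
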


%Although \cite{dilks2017resonance} does not explicitly state this lemma, it follows from their arguments in Lemma 4.2.  Namely, in Proposition 2.5, they show that $K$-promotion can be written as a product of \textit{$K$-Bender-Knuth involutions} and in Lemma 4.2 they show that the map from an order ideal to a $\calP$-partition by adding rank intertwines the $\ell$-th $K$-Bender-Knuth involution and the product of toggles of all boxes with rank $\ell$.
%\begin{comment}

Dilks, Pechenik and Striker prove a slightly different statement; specifically on the rectangle, a flip of our bijection between $\PP^{\ell}(\mathscr R(a,b))$ and $\IT^{\ell+a+b-1}(\mathscr R(a,b))$ described in Remark~\ref{rem:PPtoTableaux} is an intertwining operator between rowmotion on $J(\mathscr R(a,b) \times [\ell])$ and $K$-promotion on $\IT^{\ell+a+b-1}(\mathscr R(a,b))$ \cite[Lemma 4.2]{dilks2017resonance}. In \cite{dilks2019rowmotion}, Dilks, Striker and Vorland generalize this result.  Dilks, Pechenik and Striker's intertwining operator implies Lemma~\ref{lem:k-pro} in the rectangle case and the tools they introduce we will use to prove this statement for all graded $\lambda/\mu$. Namely they introduce the \textit{$K$-Bender-Knuth involutions}
\[\kbk_i(T)(x) := \begin{cases}i & \text{if }f(x) = i+1\text{ and there does not exist }y < x \text{ such that }f(y)=i,\\i+1 & \text{if }f(x) = i\text{ and there does not exist }y>x \text{ such that }f(y)=i+1,\\ f(x) & \text{otherwise.} \end{cases}\]
and show that for $T \in \IT^{\ell}(\lambda/\mu)$ or $T \in \SIT^{\ell}(\lambda/\mu)$
\[
\kpro(T) = \kbk_{\ell-1} \circ \kbk_{\ell-2} \circ \dots \circ \kbk_{1} (T)
\]
\cite[Proposition 2.5]{dilks2017resonance}. For a graded poset, define $H_i$ to be product of all toggles $\tau_p$ where $p$ has rank $i$. From the alternate description of rowmotion in terms of toggles (see Proposition~\ref{prop:row_as_toggles}), inverse rowmotion can be written
\[
    \row^{-1} = H_{r_{max}}\circ H_{r_{max} - 1} \circ \dots \circ H_1.
\]
\begin{proof}
    By \cite[Proposition 2.5]{dilks2017resonance} and the toggle description of rowmotion, 
    it is enough to show for ordinary or shifted shapes, the following diagram commutes for $1\leq i \leq r_{max}$
    \[\begin{tikzcd}
    J(\lambda/\mu) \ar{r}{\Psi} \ar{d}{H_i} & \IT^{r_{max}+1}(\lambda/\mu) \text{ or } \SIT^{r_{max}+1}(\lambda/\mu)\ar{d}{\kbk_{i}}\\ J(\lambda/\mu) \ar{r}{\Psi} & \IT^{r_{max}+1}(\lambda/\mu) \text{ or } \SIT^{r_{max}+1}(\lambda/\mu)
    \end{tikzcd}\]
    where $\Psi$ is the bijection from Remark~\ref{rem:PPtoTableaux}.  Equivalently, viewing order ideals as as $\{0,1\}$-poset maps and $\Psi$ as adding rank, for any $\{0,1\}$-poset map $f$ on $\lambda/\mu$ and any square $s \in \lambda/\mu$,
    \[
    \Psi \circ H_i (f)(s) = \kbk_{i} \circ \Psi (f)(s).
    \]
    Case 1: $\Psi (f)(s) \not \in \{i,i+1\}$: 
    \begin{itemize}[label={}]
        \item Since $\rank(s) \neq i$, $H_i (f)(s) = f(s)$.
        \item Since $\Psi (f)(s) \not \in \{i,i+1\}$, $\kbk_{i} \circ \Psi (f)(s) = \Psi (f)(s)$.
    \end{itemize}
    Case 2: $\rank(s) = i-1$ and $f(s) = 1$: 
    \begin{itemize}[label={}]
    \item Since $\rank(s) \neq i$, $H_i (f)(s) = f(s)$.
    \item For any square $s'$ covering $s$, $f(s') = 1$ and $\Psi (T)(s') = i+1$, thus $\kbk_{i} \circ \Psi (f)(s) = \Psi (f)(s)$.
    \end{itemize}
    Case 3: $\rank(s) = i$ and $f(s) = 1$: 
    \begin{itemize}[label={}]
    \item If for some square $s'$ covered by $s$, $f(s') = 1$, then  $H_i (f)(s) = 1$ and $\kbk_{i} \circ \Psi (f)(s) = i+1$. 
    \item Otherwise, $H_i (f)(s) = 0$ and $\kbk_{i} \circ \Psi (f)(s) = i$.
    \end{itemize}
    Case 4: $\rank(s) = i$ and $f(s) = 0$:
    \begin{itemize}[label={}]
    \item If for some square $s'$ covering $s$, $f(s') = 0$, then  $H_i (f)(s) = 0$ and $\kbk_{i} \circ \Psi (f)(s) = i$. 
    \item Otherwise, $H_i (f)(s) = 1$ and $\kbk_{i} \circ \Psi (f)(s) = i+1$.
    \end{itemize}
    Case 5: $\rank(s) = i+1$ and $f(s) = 0$:
    \begin{itemize}[label={}]
    \item Since $\rank(s) \neq i$, $H_i (f)(s) = f(s)$.
    \item For any square $s'$ covered by $s$, $f(s') = 0$ and $\Psi (T)(s') = i$, thus $\kbk_{i} \circ \Psi (f)(s) = \Psi (f)(s)$.
    \end{itemize}
\end{proof}
%\end{comment}
\begin{remark}
    As in Lemma 4.2 in \cite{dilks2017resonance}, the above argument can be extended to show $\Psi$ in an intertwining operator between inverse rowmotion on $J(\lambda/\mu \times [\ell])$ and $K$-promotion on $\IT^{\ell+a+b-1}(\lambda/\mu)$ (resp. $\SIT^{\ell+a+b-1}(\lambda/\mu)$) for an ordinary (resp. shifted) tableau $\lambda/\mu$ which is graded.
\end{remark}

\begin{comment}
\begin{remark}
Theorem 3.8 in \cite{dilks2019rowmotion} does this up to showing $K$-bender knuth swaps correspond to inverse rowmotion, perhaps this is in the literature? Otherwise we will write out that step/that it follows closely from definitions.
\syl{Tun this into a remark?}
\end{remark}
\begin{proof}
\syl{mapbe}
\end{proof}
\end{comment}

\begin{example}[Inverse rowmotion as $K$-promotion]\label{ex:K-pro}

\begin{align*}
   \begin{ytableau}
4 & 5 & 6 \\
2 & 4 & 5 \\
1 & 2 & 3
\end{ytableau}\xrightarrow{\text{  remove }1}
\begin{ytableau}
3 & 4 & 5 \\
1& 3 & 4 \\
\bullet & 1 & 2
\end{ytableau}\xrightarrow{\ \text{ }K\text{-rectify  }}
\begin{ytableau}
4 & 5 & \bullet \\
3& 4 & 5 \\
1 & 2 & 4
\end{ytableau}
\xrightarrow{\text{add maximum}}
\begin{ytableau}
4 & 5 & 6 \\
3& 4 & 5 \\
1 & 2 & 4
\end{ytableau}\\ \\
\begin{ytableau}
4 & 5 & 6 \\
2& 4 & 5 \\
1 & 2 & 3
\end{ytableau}
\xleftarrow{\text{ add rank  }}
\begin{ytableau}
1 & 1 & 1 \\
0& 1 & 1 \\
0 & 0 & 0
\end{ytableau}
 \xleftarrow{\text{Rowmotion}} \begin{ytableau}
1 & 1 & 1 \\
1& 1 & 1 \\
0 & 0 & 1
\end{ytableau} \xleftarrow{\text{subtract rank }} 
\begin{ytableau}
4 & 5 & 6 \\
3& 4 & 5 \\
1 & 2 & 4
\end{ytableau}
\end{align*}

\end{example}

\section{$K$-Knuth and weak $K$-Knuth Equivalence}\label{KKnuth}
The connection between the bijection of $\phi$ and rowmotion is most apparent in their use of $K$-{\jdt} slides.  In this section, we will introduce some invariants of $K$-{\jdt} and use these invariants to prove Theorem~\ref{thm:commute} through considering rowmotion and $\phi$ in terms of $K$-{\jdt} slides.
\subsection{$\kjdt$ equivalence for ordinary and shifted tableaux}\label{subsec:K-jdt=Knuth}
%As hinted at in example~\ref{ex:K-pro}, we may just as easily perform $\kjdt$ slides with $a$ set to a dummy variable ``$\bullet$''.
%While the swaps involved in the bijection $\phi$ were performed between two integers, we could just as easily perform $\textnormal{swap}_{\bullet,b}$ where we perform the swap with $a$ set to a dummy variable ``$\bullet$''. A $\kjdt$ slide is then the process of performing the swaps
%\[
%    \hdots\textnormal{swap}_{\bullet,2}\circ\textnormal{swap}_{\bullet,1} \circ \textnormal{swap}_{\bullet,0}
%\]
%for some tableaux where boxes are labeled either with non-negative integers or with dots. 
Using the forward and reverse $\kjdt$ slides in Definition~\ref{def:jdt}, we may define an equivalence relation on tableaux in $\Lambda$ for a fixed $\Lambda$.
For a fixed poset $\Lambda$, two tableaux $T,T'$ in $\Lambda$ are considered \emph{$\kjdt$ equivalent} if $T$ can be reached from $T'$ by a series of $\kjdt$ slides.  We will be concerned with $\kjdt$ equivalence of ordinary and shifted tableaux. 
In particular, in the bijection $\varphi$, the realization $T'$ of a tableaux of the rectangle as a skew shifted tableaux is $\kjdt$ equivalent to $\varphi(T')$ since $\varphi$ can be written as a the composition of $\kjdt$ slides. 

\begin{prop}\label{prop:phiequiv}
    $T'$ and $\varphi(T)$ are $\kjdt$ equivalent shifted tableaux.
\end{prop}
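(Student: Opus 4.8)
The plan is to observe that this proposition is essentially immediate from the way $\varphi$ was defined, so the work lies entirely in matching up two definitions. First I would recall from \Cref{def:hppw_bijection} that $\varphi(T)$ is produced from the skew shifted realization $T'$ (step (1), with $n=a$) by the explicit composition
\[
\varphi(T) = \kjdt_{S^{-1}(\overline{1})} \circ \kjdt_{S^{-1}(\overline{2})} \circ \cdots \circ \kjdt_{S^{-1}(\overline{2a-3})}(T').
\]
Every factor appearing here is a forward $\kjdt$ slide, performed inside the ambient shifted poset $\Lambda = \{(a,b) \in \mathbb{N}^2 \mid a \le b\}$. Hence $\varphi(T)$ is reached from $T'$ by a finite sequence of $\kjdt$ slides, all of which stay within the same $\Lambda$.

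Next I would simply invoke the definition of $\kjdt$ equivalence given at the start of this subsection: for a fixed $\Lambda$, two tableaux are $\kjdt$ equivalent precisely when one can be obtained from the other by a series of (forward or reverse) $\kjdt$ slides. Since we have just exhibited $\varphi(T)$ as the output of such a sequence applied to $T'$, the pair $(T', \varphi(T))$ meets this condition, so $T'$ and $\varphi(T)$ are $\kjdt$ equivalent. Both are shifted tableaux by construction ($T'$ skew, $\varphi(T)$ straight), which is all the statement asserts.

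I do not expect any genuine obstacle, as the content is purely definitional; the proposition's real purpose is to repackage the fact that $\varphi$ \emph{acts by} $\kjdt$ slides into a statement about the \emph{equivalence relation}, so that the $\kjdt$-invariants (the $K$-Knuth and weak $K$-Knuth classes) developed later in the section can be brought to bear on $\varphi$. The only point I would be careful to note explicitly is that $T'$ and $\varphi(T)$ are regarded as tableaux in the \emph{same} ambient poset $\Lambda$, since $\kjdt$ equivalence is defined relative to a fixed $\Lambda$; this is guaranteed because the slides in \Cref{def:hppw_bijection} never leave the shifted orthant.
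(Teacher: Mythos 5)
Your proof is correct and matches the paper's own justification, which is simply the observation (stated in the sentence immediately preceding the proposition) that $\varphi$ is by definition a composition of $\kjdt$ slides applied to $T'$ within the shifted orthant, so the equivalence is immediate from the definition. Your extra remark about both tableaux living in the same ambient poset $\Lambda$ is a reasonable precaution but does not change the argument.
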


For an increasing tableaux $T: \lambda/\mu \rightarrow \{1 < 2 < \dots < m\}$, define the tableaux $T|_{[a,b]}$ as the restriction of $T$ to $T^{-1}([a,b]) \subseteq \lambda/\mu$.
A useful consequence of the way that we perform the swaps, is that if we restrict two $\kjdt$ equivalent tableaux $T,T'$ to the same interval $[a,b]$, then a slight modification (essentially a restriction) of the $\kjdt$ slides which change $T$ into $T'$ will change $T|_{[a,b]}$ into $T'|_{[a,b]}$. Specifically,
\begin{lemma}\cite[Lemma 3.3]{buch2016k}
\label{lem:ResInterval}
If $T$ and $T'$ are $\kjdt$ equivalent, then $T|_{[a,b]}$ and $T'|_{[a,b]}$ are $\kjdt$ equivalent.
\end{lemma}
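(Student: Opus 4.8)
The goal is to prove \Cref{lem:ResInterval}: that if increasing tableaux $T$ and $T'$ are $\kjdt$ equivalent, then their restrictions $T|_{[a,b]}$ and $T'|_{[a,b]}$ to the value-interval $[a,b]$ are also $\kjdt$ equivalent. Since $\kjdt$ equivalence is defined as reachability through a sequence of forward and reverse $\kjdt$ slides, and since a composition of equivalences is again an equivalence, it suffices to prove the statement for a \emph{single} $\kjdt$ slide: if $T' = \kjdt_C(T)$ (or $T' = \widehat{\kjdt_{C'}}(T)$) for an admissible set $C$, then $T|_{[a,b]}$ and $T'|_{[a,b]}$ are $\kjdt$ equivalent. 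The general case then follows by induction on the length of the slide sequence.

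\textbf{Plan for the single-slide case.} Recall from \Cref{def:jdt} that a forward slide $\kjdt_C(T)$ is executed as the ordered composition $\left(\prod_{b=1}^{\ell}\swap_{\bullet,b}\right)(T\cup C)$ followed by deleting the $\bullet$s. The key observation is that each elementary $\swap_{\bullet,v}$ only exchanges $\bullet$s with boxes of a single value $v$, and hence interacts with at most one of the three regions $T^{-1}([1,a-1])$, $T^{-1}([a,b])$, and $T^{-1}([b+1,\ell])$ at a time. First I would track where the value-$v$ swaps land: the swaps for $v \in [1,a-1]$ only move $\bullet$s through the ``below'' region, the swaps for $v\in[b+1,\ell]$ only move them through the ``above'' region, and only the swaps for $v\in[a,b]$ actually rearrange the entries in the interval of interest. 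The claim is that the net effect of the middle block of swaps $\prod_{v=a}^{b}\swap_{\bullet,v}$, restricted appropriately, realizes a genuine $\kjdt$ slide (or a short sequence of them) on $T|_{[a,b]}$.

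\textbf{Key steps in order.} I would proceed as follows. (1) Reduce to a single slide by the compositional remark above. (2) Identify the positions of the $\bullet$s at the moment the swap process reaches value $a$; these positions form an admissible inner (or outer) corner set relative to the subshape supporting $T|_{[a,b]}$, because the boxes of value $<a$ have by then been shifted out of the way and vacated the cells immediately comparable to the $[a,b]$-region. (3) Verify that running $\swap_{\bullet,a},\swap_{\bullet,a+1},\dots,\swap_{\bullet,b}$ on $T|_{[a,b]}$ (with these $\bullet$s inserted) produces exactly $T'|_{[a,b]}$ after the $\bullet$s are deleted — this is precisely the definition of a $\kjdt$ slide on the restricted tableau, so it exhibits the desired equivalence. (4) Confirm that the $\bullet$s surviving past value $b$ exit the $[a,b]$-region cleanly, so that the restriction genuinely matches; the swaps for values $>b$ do not disturb the $[a,b]$-entries. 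The reverse-slide case is symmetric, with the swap order reversed and corner sets replaced by minimal rather than maximal boxes.

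\textbf{Main obstacle.} The delicate point, and the one I would spend the most care on, is the $K$-theoretic phenomenon that a single box can be occupied by $\bullet$ and a numerical value \emph{simultaneously} in the sense that values can ``stick'' (the same label appearing in two adjacent boxes after a swap), which is what distinguishes $K$-$\jdt$ from classical $\jdt$. Because of this, the boundary interactions between the $[a,b]$-block and the adjacent value blocks are not simply a clean hand-off: a $\bullet$ arriving from the $<a$ region could in principle trigger a swap that duplicates a value on the boundary of the interval. I would need to check that these boundary duplications, when restricted to $[a,b]$, either lie entirely inside the interval (and so are legitimately part of the restricted slide) or entirely outside it (and so are invisible to the restriction), and never straddle the cut in a way that corrupts $T|_{[a,b]}$. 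Making this case analysis precise — essentially showing the swap operation commutes with restriction to a value-interval up to reindexing the $\bullet$ set — is the technical heart of the argument; this is exactly the content \cite{buch2016k} establish in their \cite[Lemma 3.3]{buch2016k}, and I would follow their bookkeeping of $\bullet$ trajectories.
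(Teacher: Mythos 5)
The paper gives no proof of this lemma at all---it is quoted verbatim as \cite[Lemma 3.3]{buch2016k}---and your outline is a faithful reconstruction of the argument given in that reference: reduce to a single forward or reverse slide, observe that after the swaps $\swap_{\bullet,1},\dots,\swap_{\bullet,a-1}$ the $\bullet$s occupy an admissible corner set for the shape of $T|_{[a,b]}$, check that the middle block of swaps then executes a literal $\kjdt$ slide on the restriction, and note that the swaps for values above $b$ no longer touch it. Since you explicitly defer the delicate bookkeeping of $\bullet$ trajectories (including the $K$-theoretic duplication of labels at the boundary of the interval) to \cite[Lemma 3.3]{buch2016k}, which is precisely what the paper itself does, your proposal takes the same route as the paper's treatment and is correct as an outline.
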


In \cite{buch2016k}, Buch and Samuel show that $\kjdt$ equivalence for ordinary and shifted tableaux can be described by $K$-Knuth and weak $K$-Knuth equivalence relations of their reading words, respectively.

\begin{definition}
    The \emph{row reading word} of a tableau $T$ of ordinary or shifted shape 
    %,denoted $\reading (T)$, 
    is the word obtained by reading the rows of $T$ from top to bottom, and from right to left within each row.
\end{definition}
\begin{example}
\begin{align*}
   \begin{ytableau}
4 & 5 & 6 \\
2 & 4 & 5 \\
1 & 2 & 3
\end{ytableau} \quad\quad& \textnormal{ has row reading word } 456245123 \\
%& \textnormal{ and Hecke permutation } 3261754 \\
\begin{ytableau}
\none & \none & 6 & \none & \none \\
\none & 3 & 4 & 5 & \none \\
1 & 2 & 3 & 4 & 6
\end{ytableau}\hspace{0.17cm} & \textnormal{ has row reading word } 634512346 \\
%& \textnormal{ and Hecke permutation } 2457163
\end{align*}
\end{example}

\begin{theorem}\cite[Theorem 6.2]{buch2016k}
    Ordinary tableaux $T,T'$ are $\kjdt$ equivalent if and only if their row reading words are $K$-Knuth equivalent, where $K$-Knuth equivalence is the symmetric transitive closure of the following basic equivalences:
    \begin{enumerate}[nosep]
        \item[$\circ$] $uaav \equiv uav$ for integers $a$ and words $u,v$,
        \item[$\circ$] $uabav \equiv ubabv$ for integers $a,b$ and words $u,v$,
        \item[$\circ$] $uabcv \equiv uacbv$ for integers $b< a <c$ and words $u,v$,
        \item[$\circ$] $uabcv \equiv ubacv$ for integers $a < c <b$ and words $u,v$.
    \end{enumerate}
\end{theorem}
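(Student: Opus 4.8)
The plan is to prove the two implications separately, following the template of classical Sch\"utzenberger theory but accounting for the idempotent relation (1), which is the genuinely $K$-theoretic feature. For the forward direction (\kjdt\ equivalent $\Rightarrow$ $K$-Knuth equivalent reading words), it suffices to analyze a single slide, since \kjdt\ equivalence is generated by forward and reverse slides and $K$-Knuth equivalence is symmetric. A slide is a composition of swaps $\swap_{\bullet,b}$, each of which moves the $\bullet$'s one step and may merge or duplicate entries. I would track the reading word through each elementary step in which a single $\bullet$ exchanges with its neighbours. Because only the entries in a small window around the $\bullet$ (a $2\times 2$ block together with the horizontally adjacent cells) are altered, there are finitely many local configurations to inspect, and in each the induced change on the reading word is a composition of the four basic moves: the duplication that occurs when a swap produces two equal adjacent entries is precisely what relation (1) absorbs, while the transpositions of nearby distinct values are handled by (2)--(4). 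This is a finite, if tedious, case verification.

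For the converse ($K$-Knuth equivalent reading words $\Rightarrow$ \kjdt\ equivalent), the heart of the argument is $K$-rectification. I would first invoke that repeated forward \kjdt\ slides take any tableau to a straight-shape tableau $\mathrm{rect}(T)$ that is independent of the sequence of slides chosen; this confluence is the $K$-analogue of the fundamental uniqueness theorem for \jdt\ and is established by Thomas and Yong \cite{TY09}. It guarantees that two tableaux are \kjdt\ equivalent if and only if they have the same rectification. Next I would show that the rectification is determined by the $K$-Knuth class of the reading word, cleanest via the Hecke insertion of \cite{buch2008stable}: its insertion tableau is a $K$-Knuth invariant, and applied to the reading word of a straight-shape tableau it reconstructs that tableau.

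Assembling the pieces: given $\reading(T) \equiv \reading(T')$, the already-proven forward direction applied to the rectifying slides yields $\reading(\mathrm{rect}(T)) \equiv \reading(T) \equiv \reading(T') \equiv \reading(\mathrm{rect}(T'))$. Since both rectifications are straight-shape, Hecke-insertion invariance forces $\mathrm{rect}(T) = \mathrm{rect}(T')$, so $T$ and $T'$ are \kjdt\ equivalent to a common tableau and hence to each other. Throughout, Lemma~\ref{lem:ResInterval} is a useful reduction: restricting to short intervals of values lets me verify the local claims on tableaux with only two or three distinct entries, matching the number of values appearing in each basic relation.

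The step I expect to be the main obstacle is proving that Hecke insertion is invariant under the idempotent relation $uaav \equiv uav$ (and, relatedly, that $K$-rectification behaves coherently with the box-destroying merges). In the classical setting no entry is ever lost, so rectification is a bijective shuffle and the insertion invariance is routine; in the $K$-theoretic setting a slide can genuinely decrease the number of boxes, and one must check that these absorptions interact consistently with the insertion algorithm. This is exactly the delicate bookkeeping that the Buch--Samuel argument \cite{buch2016k} is designed to control.
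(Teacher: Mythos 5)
A preliminary remark: the paper does not prove this statement at all --- it is quoted verbatim from Buch and Samuel \cite{buch2016k} and used as a black box --- so there is no internal proof to compare against. Judged on its own terms, your sketch of the forward direction (a finite local case analysis showing that each $\swap_{\bullet,b}$ changes the reading word by a composition of the four basic moves, with relation (1) absorbing the box-merging steps) is the right kind of argument and is essentially how the ``only if'' half is established.

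The converse, however, rests on a premise that is false, and that this very paper refutes. You invoke confluence of $K$-rectification --- that forward slides take $T$ to a straight-shape tableau $\mathrm{rect}(T)$ independent of the chosen slide sequence --- but this is exactly what fails in the $K$-theoretic setting; it is the reason Buch and Samuel introduce \emph{unique rectification targets}, i.e.\ straight-shape tableaux that are alone in their $\kjdt$ class, and why Thomas and Yong must rectify with respect to a fixed (superstandard) order. In general a single $K$-Knuth class contains several distinct straight-shape tableaux: the example reproduced in \Cref{KKnuth} of this paper (from \cite[Example 7.4]{gaetz2016k}) exhibits four pairwise distinct straight-shape tableaux in one class. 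Consequently your final step --- ``since both rectifications are straight-shape, Hecke-insertion invariance forces $\mathrm{rect}(T)=\mathrm{rect}(T')$'' --- cannot work: given that example, it is impossible for Hecke insertion to be simultaneously a $K$-Knuth invariant and a procedure that reconstructs every straight-shape tableau from its reading word, so at least one of the two properties you rely on must fail (indeed, only the cruder Hecke permutation of Corollary~\ref{cor:Heckeinvariant} survives as an invariant of the class). The converse has to be argued in the opposite direction: one realizes each elementary $K$-Knuth relation on words by explicit $\kjdt$ slides (e.g.\ by encoding a word as a one-box-per-letter antidiagonal skew tableau that is slide-equivalent to any tableau with that reading word), rather than funnelling both tableaux to a purported canonical rectification. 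As written, the second half of your argument does not go through.
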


\begin{theorem}\cite[Theorem 7.8]{buch2016k}
Shifted tableaux $T,T'$ are $\kjdt$ equivalent if and only if their row reading words are weakly $K$-Knuth equivalent, where weak $K$-Knuth equivalence is the symmetric transitive closure of the basic equivalences of $K$-Knuth equivalence and the following basic equivalence 
\begin{itemize}[nosep]
    \item[$\circ$] $abv \equiv bav$ for integers $a,b$ and word $v$.
\end{itemize}
\end{theorem}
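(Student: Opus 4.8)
The plan is to establish the two implications separately, in parallel with the proof of the ordinary $K$-Knuth case stated just above, but paying careful attention to the main diagonal of the shifted shape---which is exactly the feature encoded by the extra generator $abv \equiv bav$. Note at the outset that a naive reduction to the ordinary theorem fails: a ``doubling'' of a shifted tableau into an ordinary one does not respect $\kjdt$ slides near the diagonal, and the commutation move $abv \equiv bav$ has no ordinary counterpart, so the shifted phenomenon must be handled intrinsically.

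For the forward direction, that $\kjdt$ equivalence of shifted tableaux implies weak $K$-Knuth equivalence of their reading words, it suffices---because every $\kjdt$ slide is by definition a composition of swaps $\swap_{\bullet,b}$---to show that a single swap changes the row reading word only within its weak $K$-Knuth class. Each swap acts locally: only boxes adjacent to a migrating $\bullet$ are affected, so the reading word is altered only inside a bounded window. Using the restriction principle of \Cref{lem:ResInterval} to isolate the relevant value interval, I would reduce to a finite case analysis on two- and three-letter windows. When the migrating entry lies in the interior of the shape, the induced rearrangement is one of the four basic $K$-Knuth moves, exactly as in the ordinary argument; when boxes on or adjacent to the main diagonal are involved, the reflective symmetry of the shifted diagram instead transposes the two leading letters of the reading word, which is precisely the move $abv \equiv bav$. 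Composing over all swaps in a slide yields the claim.

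The reverse direction, that weakly $K$-Knuth equivalent reading words must arise from $\kjdt$ equivalent shifted tableaux, is the main obstacle. The difficulty is twofold: weak $K$-Knuth equivalence is a purely word-combinatorial relation whereas $\kjdt$ equivalence is geometric, and, crucially, $K$-rectification of shifted tableaux is \emph{not} confluent, so one cannot simply rectify each tableau to a canonical straight shape and compare. The strategy I would pursue is to produce a common complete invariant via shifted Hecke insertion: (i) show the insertion tableau of a word is unchanged by each of the five basic weak $K$-Knuth moves, so that it descends to an invariant of the weak $K$-Knuth class; and (ii) show that a $\kjdt$ slide leaves the insertion tableau of the reading word unchanged, so that it is likewise a $\kjdt$ invariant. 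Establishing that this shared invariant is \emph{complete}---that equal insertion tableaux force both weak $K$-Knuth equivalence of words and $\kjdt$ equivalence of tableaux---then collapses the two relations onto one another and gives the ``if and only if.''

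I expect step (i) to be the technical heart, and within it the verification that the diagonal commutation $abv \equiv bav$ preserves the shifted insertion tableau, since this generator is exactly what distinguishes the shifted theory from the ordinary one and its invariance reflects the freedom to reflect entries across the main diagonal during insertion. The non-confluence of shifted rectification is what forces us to route the completeness argument through an honest insertion-based invariant rather than through a would-be unique normal form; once invariance under all five generators and under $\kjdt$ slides is in hand, completeness and hence the theorem follow.
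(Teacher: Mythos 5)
This statement is not proved in the paper at all: it is imported verbatim from Buch--Samuel \cite{buch2016k} (their Theorem 7.8), so the comparison below is against the argument in that source rather than against anything in this manuscript. Your outline of the forward direction is essentially the right one and matches theirs: a $\kjdt$ slide is a composition of swaps, each swap perturbs the reading word locally, the interior cases reduce to the four ordinary $K$-Knuth moves, and the diagonal cases produce the extra commutation $abv\equiv bav$. (One side remark of yours is backwards, though: doubling a shifted tableau \emph{does} respect $\kjdt$ equivalence in the forward direction --- that is exactly \Cref{prop:7.1BuchSamuel} --- and what blocks a reduction to the ordinary theorem is only that the \emph{converse} of that proposition is conjectural.)

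The genuine gap is in your reverse direction. Your plan hinges on shifted Hecke insertion being a \emph{complete} invariant, i.e.\ on step (i): that the insertion tableau is unchanged by each basic weak $K$-Knuth move, with equality of insertion tableaux conversely forcing equivalence. This is false. What is actually known (Hamaker, Keilthy, Patrias, Webster, Zhang, and Zhou, for the shifted case; Buch--Samuel for the ordinary case) is the one-directional statement that words with the same (shifted) Hecke insertion tableau are (weakly) $K$-Knuth equivalent; the converse fails, so the insertion tableau is strictly finer than the equivalence you need and cannot ``collapse the two relations onto one another.'' The obstruction is already visible in this paper: the example reproduced from \cite[Example 7.4]{gaetz2016k} exhibits four distinct straight tableaux in a single $K$-Knuth class, so no rule assigning a canonical straight tableau to each class --- insertion-based or otherwise --- can be a complete invariant in your sense. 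The actual proof in \cite{buch2016k} avoids any such invariant: for the converse they realize an arbitrary word as the reading word of a skew (shifted) tableau supported on an antidiagonal strip, show this realization is well defined up to $\kjdt$ equivalence, and then verify that each of the five elementary word relations (including $abv\equiv bav$) is implemented by an explicit finite sequence of slides on these strip tableaux. Your proposal would need to be rebuilt along those lines; as written, the completeness claim at its core does not hold.
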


In light of the above theorems, we say two ordinary tableaux are \textit{$K$-Knuth equivalent} if their row reading words are $K$-Knuth equivalent and we say two shifted tableaux are \textit{weak $K$-Knuth equivalent} if their row reading words are weak $K$-Knuth equivalent.

The reader may notice that $K$-Knuth and weak $K$-Knuth equivalence are similar and hence believe that $\kjdt$ equivalences of ordinary and shifted tableaux are related. This is true. For a shifted tableau $T$, we may construct an ordinary tableau by reflecting $T$ across the diagonal. Concretely, we define $T^2$ to be the ordinary tableau with boxes $(i,j),(j,i)$ for boxes $(i,j)$ of $T$, where 
\[
T^2(i,j) := \begin{cases}
T(i,j) & i \leq j \\
T(j,i) & i>j
\end{cases}.
\]

\begin{prop}\cite[Proposition 7.1]{buch2016k}
\label{prop:7.1BuchSamuel}
    If $T$ and $T'$ are $\kjdt$ equivalent shifted tableaux, then $T^2$ and $T'^2$ are $\kjdt$ equivalent ordinary tableaux.
\end{prop}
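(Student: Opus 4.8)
The plan is to reduce to a single slide and then show that one shifted $\kjdt$ slide on $T$ is simulated by one ordinary $\kjdt$ slide on the doubled tableau $T^2$. Since $\kjdt$ equivalence is, by definition, the symmetric and transitive closure of single $\kjdt$ slides, it suffices to prove: if $T'$ is obtained from $T$ by a single shifted $\kjdt$ slide, then $T'^2$ is obtained from $T^2$ by an ordinary $\kjdt$ slide. I would treat the forward slide; the reverse slide is entirely analogous. Throughout, let $\sigma\colon (i,j)\mapsto(j,i)$ be reflection across the main diagonal. Then $\sigma$ is an automorphism of the ordinary poset $\mathbb N\times\mathbb N$, and $T^2$ is $\sigma$-invariant, i.e. $T^2\circ\sigma=T^2$. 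The first ingredient I would record is that the swap of Definition~\ref{def:swap} commutes with the poset automorphism $\sigma$, so that $\swap_{a,b}(f)\circ\sigma=\swap_{a,b}(f\circ\sigma)$ for any filling $f$ of a $\sigma$-invariant shape; consequently, if the holes are placed symmetrically, then every intermediate filling arising in an ordinary slide on $T^2$ remains $\sigma$-invariant.

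Next I would set up the correspondence of slides. A shifted forward slide is specified by a set $C$ of inner corners into which the $\bullet$'s are inserted; in $T^2$ I insert $\bullet$'s at the symmetric set $C\cup\sigma(C)$, where a diagonal corner of $C$ contributes a single $\sigma$-fixed box. After verifying that $C\cup\sigma(C)$ is a valid set of inner corners of the doubled shape, so that $\kjdt_{C\cup\sigma(C)}$ is defined on $T^2$, the goal becomes the identity $\kjdt_{C\cup\sigma(C)}(T^2)=(T')^2$. For boxes and swaps that never touch the diagonal this is immediate: the upper triangle and its mirror image in the lower triangle consist of disjoint, mutually non-adjacent boxes, so each global swap $\swap_{\bullet,b}$ acts on them independently, and there the adjacency relations of the double and of the shifted poset coincide, so the upper-triangle action literally equals the shifted swap.

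The main obstacle is the behavior at the diagonal, and I expect this to be the crux of the argument. In the double a diagonal box $(i,i)$ acquires the extra neighbors $(i+1,i)$ and $(i,i-1)$, which are absent from the shifted poset, and a hole moving up one triangle can meet its mirror image moving up the other; a priori the ordinary swap $\swap_{\bullet,b}$ on $T^2$ might therefore move entries near the diagonal differently than the shifted swap does on $T$. I would resolve this by a local case analysis at the diagonal, exploiting the $\sigma$-symmetry of every intermediate filling together with the fact that the swaps $\swap_{\bullet,1},\dots,\swap_{\bullet,\ell}$ are applied in increasing order of $b$, to show that the restriction to the (weakly) upper-triangular boxes of each symmetric double-swap still agrees with the corresponding shifted swap on $T$. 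Granting this, $\sigma$-invariance forces every intermediate filling of the ordinary slide on $T^2$ to restrict on the upper triangle to the corresponding intermediate filling of the shifted slide on $T$, whence $\kjdt_{C\cup\sigma(C)}(T^2)=(T')^2$; transitivity of $\kjdt$ equivalence then completes the proof. One could instead try to route through the reading-word characterizations, but the reading word of the doubled tableau $T^2$ is awkward to relate to that of $T$, so the direct slide-simulation seems cleaner.
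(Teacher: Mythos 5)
This proposition is one the paper does not prove: it is imported verbatim from Buch and Samuel \cite{buch2016k}, so there is no internal proof to compare your argument against. Judged on its own terms, your argument is correct, and it is the natural way to derive the statement directly from the paper's Definition~\ref{def:jdt}: reduce to a single slide, place holes at the symmetric set $C\cup\sigma(C)$ (which one checks consists of maximal boxes of the doubled inner shape, resp.\ minimal boxes of the doubled outer complement for reverse slides), and show that each $\swap_{\bullet,b}$ applied to the $\sigma$-invariant doubled filling restricts on the weakly upper triangle to the shifted $\swap_{\bullet,b}$. The one step you defer --- the ``local case analysis at the diagonal'' --- in fact collapses to a single observation worth stating explicitly: strictly upper and strictly lower boxes of $\mathbb{N}\times\mathbb{N}$ are never adjacent (adjacency would force $i<j<i+1$), so the only interface between the two triangles is through the diagonal boxes themselves, and the two extra neighbors $(i+1,i)$ and $(i,i-1)$ that a diagonal box $(i,i)$ acquires in the double are exactly the $\sigma$-mirrors of its two shifted neighbors $(i,i+1)$ and $(i-1,i)$. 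By $\sigma$-invariance of every intermediate filling these mirrors carry the same values, so the existence conditions in Definition~\ref{def:swap} are identical in the two settings; in particular no genuine case analysis is needed, and the increasing order in which the $\swap_{\bullet,b}$ are applied plays no role in this part of the argument (the restriction identity holds swap by swap for any $\sigma$-invariant filling). With that sentence supplied, your proof is complete and self-contained relative to the paper's definitions.
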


Buch and Samuel also conjecture that the converse of the above proposition is true.

\subsection{Hecke permutations}
While weak $K$-Knuth and $K$-Knuth equivalence of row reading words completely describe $\kjdt$ equivalence, these equivalences can be difficult to work with. Buch and Samuel \cite{buch2016k} introduce a simpler yet cruder invariant of $\kjdt$ on ordinary tableaux and use this invariant to prove minimal tableaux are \textit{unique rectification targets} (unique in their (weak) $K$-Knuth class among all straight tableaux). This invariant is called the Hecke permutation. The \emph{Hecke product} of a permutation $u$ and a simple transposition $s_i = (i,i+1)$ is denoted $u \cdot s$ with
\[
    u \cdot s_i = \begin{cases}
    u    & \textnormal{ if } u(i)>u(i+1)\\
    us_i    & \textnormal{ if } u(i)<u(i+1)
    \end{cases}
\]

\begin{definition}[\cite{buch2016k}]
    The \textit{Hecke permutation} of a tableau $T$ with reading word $u = a_1a_2a_3 \hdots a_k$, is the Hecke product
    \[
        s_{a_k} \cdot (s_{a_{k-1}} \cdot (s_{a_{k-2}} \hdots (s_{a_2} \cdot s_{a_1}) \hdots ))
    \]
    which is a permutation on $\max(a_1,a_2 \hdots, a_k)+1$ elements. We will denote this permutation by $w(T)$ or $w(u)$.
\end{definition}

If two tableaux reading words $u$ and $u’$ are $K$-Knuth equivalent, then we have $w(u) = w(u’)$ (although the converse need not be true). In particular, this implies

\begin{corollary}\cite[Corollary 6.5]{buch2016k}
\label{cor:Heckeinvariant}
    The Hecke permutation of an ordinary tableau is invariant under $\kjdt$ slides. %Equivalently, if $T$ and $T'$ are $\kjdt$ equivalent tableau of type A, then $w(T)=w(T')$.
\end{corollary}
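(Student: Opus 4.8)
The plan is to deduce the corollary from two ingredients already in hand: the theorem of Buch and Samuel \cite[Theorem 6.2]{buch2016k}, which says two ordinary tableaux are $\kjdt$ equivalent exactly when their row reading words are $K$-Knuth equivalent, and the assertion recorded just above that $K$-Knuth equivalent words have equal Hecke permutations. Granting the latter, the corollary is immediate: a $\kjdt$ slide carries $T$ to a $\kjdt$ equivalent tableau, so its reading word is $K$-Knuth equivalent to that of $T$, so $w(T)$ is unchanged. Hence the actual content is to verify that each of the four basic $K$-Knuth moves preserves the Hecke permutation $w$.

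First I would record that the Hecke product makes permutations into the $0$-Hecke monoid; in particular it is associative, so $w(u)$ may be computed as the Demazure product of the generators $s_{a_1},\dots,s_{a_k}$, and the relevant relations are idempotency $s_i s_i = s_i$, commutation $s_i s_j = s_j s_i$ for $|i-j|\ge 2$, and the braid relation $s_i s_{i+1} s_i = s_{i+1} s_i s_{i+1}$. Since each basic move alters only a contiguous factor of $u$, associativity lets me write $w(u) = A\cdot(\text{factor})\cdot B$, where $A,B$ are the Hecke products of the two unchanged flanks, and it suffices to check that the affected factor has an invariant Hecke product.

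Then I would dispatch the four moves. Move $1$, $uaav\equiv uav$, is exactly idempotency $s_a s_a = s_a$. Move $2$, $uabav\equiv ubabv$, reduces to $s_a s_b s_a = s_b s_a s_b$, which is the braid relation when $|a-b|=1$ and collapses to the common value $s_a s_b$ by commutation and idempotency when $|a-b|\ge 2$ (and is trivial when $a=b$). The key observation for the last two moves is that the guard inequalities force the transposed letters to be \emph{non-adjacent}: in move $3$, $uabcv\equiv uacbv$ with $b<a<c$, only $s_b$ and $s_c$ are exchanged, and $b<a<c$ forces $c-b\ge 2$, so $s_b s_c = s_c s_b$; in move $4$, $uabcv\equiv ubacv$ with $a<c<b$, only $s_a$ and $s_b$ are exchanged, and $a<c<b$ forces $b-a\ge 2$, so again commutation applies. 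This shows $K$-Knuth equivalent reading words have the same Hecke permutation, and combined with \cite[Theorem 6.2]{buch2016k} it gives the corollary.

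The only place requiring care is the bookkeeping in isolating the contiguous factor: because the reading word is fed into the Hecke product in reverse order, a local move on $u$ corresponds to a local move on a \emph{reversed} subproduct of $w(u)$. I would note that this causes no trouble, since all three defining relations are invariant under reversal, so checking the reversed factor is equivalent to checking the forward one. Once this reduction is set up the four case checks are mechanical, and the guard-inequality observation is precisely what lets moves $3$ and $4$ fall to pure commutation.
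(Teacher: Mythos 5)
Your proposal is correct and follows the same route the paper takes: the paper simply records that $K$-Knuth equivalent reading words have equal Hecke permutations and combines this with \cite[Theorem 6.2]{buch2016k}, citing Buch--Samuel for both. Your case-by-case verification of the four basic moves in the $0$-Hecke monoid (including the reversal remark and the observation that the guard inequalities in moves 3 and 4 force non-adjacency) correctly supplies the details the paper leaves to the citation.
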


\subsection{Minimal ideals and $K$-Knuth equivalence}

Although our proof of Theorem~\ref{thm:commute} will only involve almost minimal tableaux, our main theorem in this subsection will be more general and we will need more general notations:

\begin{definition}
    Given an increasing tableau $T$ of shape $\lambda/\mu$, its \emph{minimal ideal} is the set of boxes $s$ such that $T(s)-\rank(s)=0$. This set is downward closed and thus is an order ideal of the poset $\lambda/\mu$. For convenience will denote the \emph{minimal ideal} of $T$ by $I_0$ and the minimal ideal of $T'$ by $I_0'$.
\end{definition}

Our results in this section will come from analyzing the Hecke permutations of ordinary tableaux. We will specifically be interested in finding where elements $i$ occur in the Hecke permutation formed by the row reading word of a tableau.

\begin{prop}
\label{prop:HeckeDownr}
Let $T$ be a tableau and $\overline{T_r}$ be the tableau $T$ without the first $r$ rows, then for any $i$, $w(T)^{-1}(i) \geq w(\overline{T_r})^{-1}(i)-r$.
\end{prop}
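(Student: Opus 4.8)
The plan is to peel rows off the top one at a time and induct on $r$. Write $T^{(0)}=T$ and let $T^{(j)}$ be $T$ with its top $j$ rows deleted, so that $T^{(r)}=\overline{T_r}$ and each $T^{(j)}$ is obtained from $T^{(j-1)}$ by deleting a single top row. It then suffices to prove the single-row estimate
\[
w(T^{(j-1)})^{-1}(i)\ \ge\ w(T^{(j)})^{-1}(i)-1
\]
for every $j$ and every $i$, since chaining these $r$ inequalities gives $w(T)^{-1}(i)\ge w(\overline{T_r})^{-1}(i)-r$. So I reduce to the case $r=1$: writing $T'$ for $T$ with its top row removed, I must show $w(T)^{-1}(i)\ge w(T')^{-1}(i)-1$. (A quick check on a $2\times 2$ example confirms that it is the \emph{top} rows that must be removed: removing a bottom row can move a value arbitrarily far.)

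To set up the single-row case, let $c_1<c_2<\cdots<c_t$ be the strictly increasing entries of the top row of $T$. The reading word of $T$ is the reading word of $T'$ preceded by the reversed top row $c_tc_{t-1}\cdots c_1$, so by associativity of the Hecke product and the definition of $w$,
\[
w(T)=w(T')\cdot\bigl(s_{c_1}\cdot s_{c_2}\cdots s_{c_t}\bigr).
\]
Taking inverses, using that $x\mapsto x^{-1}$ is an anti-automorphism of the Hecke product (so $(\alpha\cdot\beta)^{-1}=\beta^{-1}\cdot\alpha^{-1}$) together with $s_c^{-1}=s_c$, and embedding all permutations into one common symmetric group by fixing the extra letters, gives
\[
w(T)^{-1}=s_{c_t}\cdot s_{c_{t-1}}\cdots s_{c_1}\cdot w(T')^{-1}.
\]
Hence $w(T)^{-1}$ is obtained from $W_0:=w(T')^{-1}$ by successively applying left Hecke multiplication by $s_{c_1},s_{c_2},\dots,s_{c_t}$, in that increasing order. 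I would track the value sitting in position $i$, namely $W_j(i)$ where $W_j:=s_{c_j}\cdot W_{j-1}$, since $w(T)^{-1}(i)=W_t(i)$ and $w(T')^{-1}(i)=W_0(i)$.

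The key observation is that left Hecke multiplication by $s_c$ alters the one-line notation only by possibly interchanging the values $c$ and $c+1$; for a fixed position $i$ this means $W_{j-1}(i)$ is unchanged unless it equals $c_j$ (when it may rise to $c_j+1$) or $c_j+1$ (when it may drop to $c_j$). So each step moves the value at position $i$ by at most $1$, an increase requires the current value to be exactly $c_j$, and a decrease requires it to be exactly $c_j+1$. Because the indices satisfy $c_1<\cdots<c_t$, once a decrease happens at some step $j$ the value becomes $c_j$, which is strictly below every later index $c_{j'}\ge c_j+1$; consequently no subsequent step can touch position $i$ again and the value is frozen at $c_j$ thereafter. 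This forces at most one decrease, occurring as the final change, with all earlier effective changes being increases, so the running value never drops below $W_0(i)-1$. Therefore $W_t(i)\ge W_0(i)-1$, which is the single-row estimate. I expect the only delicate point to be exactly this ``freezing'' phenomenon: a naive count would let position $i$ be moved both by the generator $s_c$ with $c=i$ and by the one with $c=i-1$, permitting a drop of $2$, and it is the strict monotonicity of $c_1<\cdots<c_t$ (i.e.\ that the top row of an increasing tableau is strictly increasing) that rules out a second decrease.
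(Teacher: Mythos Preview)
Your proof is correct and follows essentially the same strategy as the paper's: induct on $r$, reducing to the single-row estimate, and then argue that the strictly increasing entries in a row force at most one decrease. The paper's one-paragraph argument is considerably terser than yours; it simply notes that right-multiplying by $s_n$ only affects positions $n$ and $n+1$ and that the entry $m-1$ (with $m=w(\overline{T_r})^{-1}(i)$) occurs at most once in the added row, then asserts the bound. Your ``freezing'' argument makes precise the step the paper leaves implicit: once the relevant quantity drops by one it can never be touched again, because the remaining indices are all strictly larger.

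The one genuine difference in packaging is that the paper works directly with $w$, tracking the \emph{position} of the value $i$ under right Hecke multiplications $w\mapsto w\cdot s_c$, whereas you pass to $w^{-1}$ via the anti-automorphism and track the \emph{value} at the fixed position $i$ under left Hecke multiplications. These are dual formulations of the same computation; your route is slightly longer but entirely valid. If you wanted to align more closely with the paper you could drop the inversion step and note that $w(T)=w(T')\cdot s_{c_1}\cdots s_{c_t}$ already lets you track $w^{-1}(i)$ directly under right-multiplications in increasing index order, where the identical freezing argument applies.
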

\begin{proof}
Let $m = w(\overline{T_r})^{-1}(i)$. Each time we compute the Hecke product of $w$ with a transposition $s_n$, only the $n$-th and $(n+1)$-th entries of $w$ is changed. Since each row of $T$ is increasing, the entry $m-1$ only appears at most once in the $r$-th row. Thus 
\[
w(\overline{T_{r-1}})^{-1}(i) \geq w(\overline{T_r})^{-1}(i) - 1.
\]
The proposition now follows by induction.
\end{proof}

In the following lemma, $R_i(T)$ denotes the $i$th row of the tableau $T$.  
\begin{lemma}
\label{lem:HeckeMinIdeal}
    Let $I_0$ be the minimal ideal of a straight ordinary tableau $T$. %If row $i$ contains an element of $I_0$, then
    \begin{enumerate}[(i)]
        \item If $|R_i(T) \cap I_0| < |R_{i-1}(T) \cap I_0|$, then $w(T)^{-1}(i)=|R_i(T) \cap I_0|+1$
        \item If $|R_i(T) \cap I_0| = |R_{i-1} \cap I_0|$, then $w(T)^{-1}(i)>|R_i(T) \cap I_0|+1$
    \end{enumerate}
\end{lemma}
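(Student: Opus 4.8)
The plan is to translate the statement into the combinatorics of the $0$-Hecke (Demazure) action on positions and then to follow a single ``second-class'' token. First I would record the structural inputs. Since $T$ is straight and increasing, $T(s)\ge\rank(s)$, so $I_0=\{s:T(s)=\rank(s)\}$ is an order ideal and the numbers $c_i:=|R_i(T)\cap I_0|$ are weakly decreasing; the $I_0$-boxes of row $i$ are its leftmost $c_i$ boxes, carrying the values $i,i+1,\dots,i+c_i-1$, while the remaining boxes of row $i$ carry values $\ge i+c_i+1$. Reading the word left to right (top rows first) and using that the Hecke product by $s_a$ swaps the entries in positions $a,a+1$ of the one-line notation exactly when they ascend, the set $J^{(i)}:=w(T)^{-1}(\{1,\dots,i\})$ is computed by the \emph{token process}: place tokens at $1,\dots,i$, read the word, and for each letter $a$ move a token from $a$ to $a+1$ whenever $a$ is occupied and $a+1$ empty. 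Since $J^{(i-1)}\subseteq J^{(i)}$, the quantity $w(T)^{-1}(i)=J^{(i)}\setminus J^{(i-1)}$ is a single position, which is what must be located.

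Two reductions come next. A letter $a$ acts only on a token already sitting at $a$; as every row above row $i$ has entries $\ge i+1$ and no token can reach position $\ge i+1$ without first being pushed by a letter $i$ (absent there), rows above row $i$ are inert for $J^{(i)}$, and rows $\ge i$ are inert for $J^{(i-1)}$. Thus $J^{(i)}$ depends only on rows $1,\dots,i$ and $J^{(i-1)}$ only on rows $1,\dots,i-1$. Processing row $i$ alone moves only the top token, carrying it from $i$ up to $i+c_i$ (the consecutive ideal letters $i,\dots,i+c_i-1$ push it, then the gap at $i+c_i$ halts it). This already gives the lower bound: the one-row tableau $R_i(T)$ satisfies $w(R_i(T))^{-1}(i)=i+c_i$, and by Proposition~\ref{prop:HeckeDownr} applied to rows $1,\dots,i$ (whose top row is $R_i(T)$), deleting the $i-1$ lower rows moves the position of $i$ by at most $i-1$, so $w(T)^{-1}(i)\ge (i+c_i)-(i-1)=c_i+1$.

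For the exact value I would set up a \emph{second-class particle coupling} of the $J^{(i)}$- and $J^{(i-1)}$-processes. After row $i$ the $J^{(i)}$-configuration is the $J^{(i-1)}$-start $\{1,\dots,i-1\}$ together with one extra token $\circ$ at $i+c_i$, above all the others. Marking the $i-1$ lower tokens as first class and $\circ$ as second class, and letting a first-class token pushed into $\circ$ swap with it, the first-class tokens perform exactly the moves of the $J^{(i-1)}$-process (so $\circ$ acts as a hole from below); hence the first-class tokens reproduce $J^{(i-1)}$ and $w(T)^{-1}(i)$ equals the \emph{final position of} $\circ$. Now track $\circ$ through rows $i-1,\dots,1$. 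As each row's letters strictly increase, $\circ$ is swapped downward at most once per row. The decisive event is at row $i-1$: lowering $\circ$ from $i+c_i$ needs the letter $i+c_i-1$, which lies in the ideal part of row $i-1$ precisely when $c_i<c_{i-1}$ and is exactly the missing gap value when $c_i=c_{i-1}$. In case (i), $c_i<c_{i-1}\le c_j$ for every lower row $j$, so the letter $j+c_i$ needed to lower $\circ$ one step sits in the ideal part of row $j$; thus $\circ$ descends by exactly one per row and lands at $(i+c_i)-(i-1)=c_i+1$, which with the lower bound gives $w(T)^{-1}(i)=c_i+1$. In case (ii) the swap at row $i-1$ cannot occur (the non-ideal letters $\ge i+c_i$ instead push $\circ$ upward), so after row $i-1$ the particle sits at height $\ge i+c_i$; then at most one descent per remaining row forces $w(T)^{-1}(i)\ge (i+c_i)-(i-2)=c_i+2>c_i+1$.

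The main obstacle is the ``exactly one descent per row'' claim behind case (i): it is not enough that the letter $j+c_i$ occurs in row $j$; I must also ensure a first-class token actually occupies position $j+c_i$ when that letter is read, so the swap with $\circ$ really fires and $\circ$ is not instead carried back up. I would prove this by maintaining, alongside $\circ$, the invariant that after processing rows $i,\dots,j+1$ the first-class configuration (equivalently the $J^{(i-1)}$-process on those rows) has its top token just below $\circ$, so that the increasing run of ideal letters of row $j$ brings a first-class token up to $j+c_i$ exactly in time; the monotonicity $c_1\ge c_2\ge\cdots$ is what keeps that run long enough. Finally I would settle the degenerate boundary cases (the value $i$ absent from $T$, or $i$ lying outside the range of genuine rows) directly from the token bookkeeping, where $J^{(i)}$ and $J^{(i-1)}$ differ in an immediately identifiable position.
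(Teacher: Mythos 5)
Your proof is correct, and at bottom it is the same induction as the paper's: establish that after row $i$ is processed the value $i$ sits at position $i+c_i$ (your ``row $i$ carries the top token to $i+c_i$'' is the paper's equation for $w(\overline{T_{i-1}})^{-1}(i)$), get the lower bound from Proposition~\ref{prop:HeckeDownr}, show in case (i) that each of rows $i-1,\dots,1$ lowers the position by exactly one because the required letter $j+c_i$ lies in the ideal part of row $j$ (using $c_i<c_{i-1}\le c_j$), and show in case (ii) that row $i-1$ cannot lower it because $i+c_i-1$ is precisely the gap value absent from that row. The packaging differs: the paper tracks the single position $w(\overline{T_j})^{-1}(i)$ directly, while you track the sets $w^{-1}(\{1,\dots,i\})$ and $w^{-1}(\{1,\dots,i-1\})$ and couple them via a second-class particle. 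What your version buys is that it isolates and actually proves the one step the paper asserts without justification, namely the implication that a position lying in the ideal range of row $r$ drops by exactly one when row $r$ is processed: for the Hecke swap at letter $j+c_i$ to fire, a value smaller than $i$ must already occupy position $j+c_i$ at that moment, and your invariant (the low block's top token is ferried up to $j+c_i$ by the increasing run $j,j+1,\dots,j+c_i-1$, through positions you have verified are vacant) is exactly the missing verification. Two small cautions: your invariant as stated (``top token just below $\circ$'') should really record that all positions strictly between the low block and $\circ$ are empty at the start of each row, which is what your own row-by-row analysis establishes; and the within-row reading order must be increasing (left to right) for the token dynamics to push as you describe --- this matches the paper's worked examples, though not its literal ``right to left'' phrasing, so state the convention you are using explicitly.
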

\begin{proof}
For each $r$ with $|R_r(T) \cap I_0| > 0$, the first element in the $r$-th row is the first appearance of $r$ in the row reading word of $T$. Using this fact, we can see that the $|R_r(T) \cap I_0|$ part of the reading word in $w(\overline{T_{r-1}})$ transposes the element $r$ with its neighbor successively $|R_r(T) \cap I_0|$ times. When $|R_r(T) \cap I_0| = 0$, there is no $r$ in the tableaux  $\overline{T_{r-1}}$. This yields
\begin{equation}
\label{eq:HeckeFirstOccurence}
w(\overline{T_{r-1}})^{-1}(r) = |R_r(T) \cap I_0|+r.
\end{equation}
For any integer $a$, if 
\begin{align}
\label{eq:HeckeMoveDownOne1}
r<w(\overline{T_{r}})^{-1}(a) \leq |R_r(T) \cap I_0|+r,
\end{align}
then
\begin{align}
\label{eq:HeckeMoveDownOne2}
w(\overline{T_{r-1}})^{-1}(a) = w(\overline{T_{r}})^{-1}(a)-1.
\end{align}
(i) We will prove by induction that for all $j\leq i$
\[
w(\overline{T_{j-1}})^{-1}(i) = |R_i(T) \cap I_0|+j.
\] This will be enough to prove (i). Our induction here is on $j$, with the base case as $j=i$ which follows from equation \ref{eq:HeckeFirstOccurence}. For the inductive step, suppose for $j\leq i$
\[
w(\overline{T_{j-1}})^{-1}(i) = |R_i(T) \cap I_0|+j.
\]
By our assumption, $|R_i(T)\cap I_0| < |R_{i-1}(T)\cap I_0|$ and since $I_0$ is an ideal,  $|R_{i-1}(T)\cap I_0|< |R_{j-1}(T)\cap I_0|$. Thus we have
\[
j-1<|R_i(T) \cap I_0|+j \leq |R_{i-1}(T)\cap I_0|+j-1 \leq |R_{j-1}(T) \cap I_0|+j-1.
\]
Finally our argument around equations \ref{eq:HeckeMoveDownOne1} and \ref{eq:HeckeMoveDownOne2} finish our inductive step.\\
(ii) Equation \ref{eq:HeckeFirstOccurence} implies,
\[
w(\overline{T_{i-1}})^{-1}(i) = |R_i(T) \cap I_0|+i.
\]
and using our assumption $|R_i(T) \cap I_0| = |R_{i-1} \cap I_0|$, we see that $|R_{i-1} \cap I_0| + i - 1 = w(\overline{T_{i-1}})^{-1}(i)-1$ is not in the $i-1$-th row of $T$. Thus 
\[
    w(\overline{T_{i-2}})^{-1}(i)\geq w(\overline{T_{i-1}})^{-1}(i),
\]
and by Proposition \ref{prop:HeckeDownr}, we conclude
\[
w(T)^{-1}(i)\geq |R_i(T) \cap I_0|+2.
\]
\end{proof}

\begin{theorem}[ordinary shape]
\label{thm:kjdtequivIdeal}
Let $T$ and $T'$ be $\kjdt$ equivalent straight ordinary tableaux with minimal ideals $I_0$ and $I_0'$. Then $I_0=I_0'$.
\end{theorem}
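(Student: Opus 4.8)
The plan is to recover the minimal ideal $I_0$ directly from the Hecke permutation $w(T)$, and then invoke invariance. By Corollary~\ref{cor:Heckeinvariant} the Hecke permutation is a $\kjdt$ invariant, so $\kjdt$ equivalence of $T$ and $T'$ gives $w(T)=w(T')$. Thus it suffices to show that $w(T)$ determines $I_0$; applying this to both $T$ and $T'$ then yields $I_0=I_0'$.

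First I would reduce the problem to reconstructing a single numerical sequence. Since $I_0$ is an order ideal of the Young-diagram poset $\lambda$, it meets each row $R_i$ in an initial segment, and the downward-closed condition forces the row-intersection sizes $n_i := |R_i(T)\cap I_0|$ to be weakly decreasing in $i$ (this is exactly why Lemma~\ref{lem:HeckeMinIdeal} only needs to treat the cases $|R_i\cap I_0|<|R_{i-1}\cap I_0|$ and $=$). Conversely, the weakly decreasing sequence $(n_i)$ determines $I_0$ completely, so I only need to show $w(T)$ determines $(n_i)$.

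The core step is an induction on $i$ extracting $n_i$ from $w:=w(T)$ using Lemma~\ref{lem:HeckeMinIdeal}. For the base case $i=1$, equation~\eqref{eq:HeckeFirstOccurence} with $r=1$ (so $\overline{T_0}=T$) gives $w^{-1}(1)=n_1+1$, hence $n_1=w^{-1}(1)-1$. For the inductive step, assume $n_{i-1}$ is known. If $n_{i-1}=0$, then $n_i=0$ (and all later entries vanish) by the ideal property, so I avoid querying the lemma in this degenerate regime. If $n_{i-1}>0$, then since $(n_i)$ is weakly decreasing exactly one of the two cases of Lemma~\ref{lem:HeckeMinIdeal} holds: in case (i) we have $w^{-1}(i)=n_i+1$ with $n_i<n_{i-1}$, forcing $w^{-1}(i)-1<n_{i-1}$; in case (ii) we have $w^{-1}(i)>n_i+1=n_{i-1}+1$, forcing $w^{-1}(i)-1>n_{i-1}$. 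These two ranges are disjoint and both exclude equality with $n_{i-1}$, so comparing the known quantity $w^{-1}(i)-1$ against the known value $n_{i-1}$ identifies the case unambiguously: if $w^{-1}(i)-1<n_{i-1}$ we set $n_i=w^{-1}(i)-1$, and if $w^{-1}(i)-1>n_{i-1}$ we set $n_i=n_{i-1}$. Either way $n_i$ is pinned down by $w$, completing the induction and showing $w(T)$ determines $(n_i)$, hence $I_0$.

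The only real subtlety is the disjointness in the inductive step, which is precisely what makes the reconstruction deterministic: the strict inequalities in both clauses of Lemma~\ref{lem:HeckeMinIdeal} place $w^{-1}(i)-1$ on opposite sides of $n_{i-1}$ in the two cases, so no information is lost when the lemma only gives an inequality in case (ii). Once this is observed, the remaining pieces—monotonicity of $(n_i)$ and termination once $n_{i-1}=0$—are immediate from $I_0$ being an order ideal, so I expect no further obstacle.
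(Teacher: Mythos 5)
Your proof is correct and is essentially the paper's argument: both rest on Corollary~\ref{cor:Heckeinvariant} together with Lemma~\ref{lem:HeckeMinIdeal}, and the disjointness you highlight (the two cases of the lemma place $w(T)^{-1}(i)-1$ strictly below or strictly above $|R_{i-1}(T)\cap I_0|$) is exactly what the paper uses, contrapositively, to conclude that the Hecke permutations must differ at the first row where the minimal ideals differ. Your direct ``reconstruct $I_0$ from $w(T)$'' framing simply makes explicit the case-separation that the paper's one-line deduction leaves implicit.
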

\begin{proof}
Suppose that $I_0 \neq I_0'$. Let $r$ be the first row where $I_0$ and $I_0'$ differ, then \Cref{lem:HeckeMinIdeal} implies that 
\[
    w(T)^{-1}(r) \neq w(T')^{-1}(r).
\]
Therefore the Hecke permutations of $T$ and $T'$ differ. Since by Corollary \ref{cor:Heckeinvariant}, Hecke permutations are invariant under $\kjdt$ slides for ordinary tableaux, $T$ and $T'$ are not $\kjdt$ equivalent.
\end{proof}

Since almost minimal tableaux are completely described by their shape and their minimal ideal, as a corollary we conclude: 
\setcounter{section}{1}
\setcounter{restate}{2}
\begin{restate}[ordinary shape]
\label{cor:uniqueidealsTypeA}
    For any (weak) partition $\lambda$, all almost-minimal tableaux of shape $\lambda$ are in separate $K$-Knuth equivalence classes.
\end{restate}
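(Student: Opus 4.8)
The plan is to derive this restated corollary directly from Theorem~\ref{thm:kjdtequivIdeal} together with the bijection between order ideals and almost minimal tableaux established just after \Cref{def:almost}. The entire conceptual content has already been packaged into the theorem; what remains is a short logical unwinding, so I would keep the proof terse.

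First I would recall two facts from the preceding development. Fact one: by \Cref{def:almost} and the remark immediately following it, an almost minimal tableau $T$ of shape $\lambda$ is uniquely determined by the pair consisting of its shape $\lambda$ and its minimal ideal $I_0 = \{s : T(s)-\rank(s)=0\}$; indeed, since every entry satisfies $T(s)-\rank(s)\in\{0,1\}$, knowing which boxes lie in $I_0$ tells us whether to add $0$ or $1$ to the rank at each box, so the correspondence $T \leftrightarrow (\lambda, I_0)$ is a bijection between almost minimal tableaux of shape $\lambda$ and order ideals of the poset $\lambda$. Fact two: by \Cref{thm:kjdtequivIdeal}, two $\kjdt$-equivalent straight ordinary tableaux have the \emph{same} minimal ideal, and by the theorem of Buch--Samuel (\cite[Theorem 6.2]{buch2016k}) quoted above, $\kjdt$ equivalence of ordinary tableaux coincides with $K$-Knuth equivalence of their reading words.

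The argument then runs by contraposition. Suppose $T$ and $T'$ are almost minimal tableaux of the same shape $\lambda$ that are $K$-Knuth equivalent. Since they share the shape $\lambda$, they are straight ordinary tableaux, so $K$-Knuth equivalence of their reading words is exactly $\kjdt$ equivalence of the tableaux. By \Cref{thm:kjdtequivIdeal} their minimal ideals agree: $I_0 = I_0'$. But an almost minimal tableau is determined by its shape together with its minimal ideal, and $T,T'$ agree on both data, so $T = T'$. Hence two \emph{distinct} almost minimal tableaux of shape $\lambda$ can never be $K$-Knuth equivalent, which is precisely the assertion that they lie in separate $K$-Knuth equivalence classes.

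I do not anticipate any genuine obstacle here, since all the machinery is in place; the only point requiring a word of care is making explicit that almost minimal tableaux of a fixed shape are in bijection with order ideals (hence pinned down by their minimal ideal), so that equality of minimal ideals forces equality of tableaux rather than merely equivalence. This is exactly the content of the bijection recorded after \Cref{def:almost}, so I would cite it and conclude.
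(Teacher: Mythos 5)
Your proposal is correct and follows exactly the paper's route: the paper derives this corollary in one line from Theorem~\ref{thm:kjdtequivIdeal} together with the observation that an almost minimal tableau is completely determined by its shape and its minimal ideal, which is precisely the unwinding you spell out. Your extra care in invoking the Buch--Samuel identification of $K$-Knuth equivalence with $\kjdt$ equivalence is implicit in the paper but harmless to make explicit.
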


To extend the above two results to shifted tableaux, we will use the connection bewteen $\kjdt$ of the shifted tableau $T$ and $\kjdt$ of the ordinary tableau $T^2$. Notice in an ordinary tableau, $\rank(i,j) = \rank(j,i)$. It follows for a straight shifted tableau $T$, for any box $s=(i,j) \in T^2$, $T^2(s)-\rank(s) = T^2[(j,i)]-\rank(j,i)$. Thus $T$ is almost minimal if and only if $T^2$ is almost minimal and two minimal ideals of tableaux $T$ and $T'$ are equal if and only if the minimal ideals of $T^2$ and $T'^2$ are equal.  Our above ordinary shape results combined with these observations and Proposition \ref{prop:7.1BuchSamuel} imply that:
\setcounter{section}{4}
\setcounter{restate}{11}
\begin{restate}[shifted shape]
\label{thm:kjdtequivIdealShifted}
    Let $T$ and $T'$ be $\kjdt$ equivalence straight shifted tableaux with minimal ideals $I_0$ and $I_0'$. Then $I_0 = I_0'$.
\end{restate}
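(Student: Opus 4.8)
The plan is to deduce this shifted statement from the ordinary case, \Cref{thm:kjdtequivIdeal}, by folding each shifted tableau across its diagonal via the operation $T \mapsto T^2$ and invoking the compatibility result of Buch and Samuel, \Cref{prop:7.1BuchSamuel}. First, since $T$ and $T'$ are $\kjdt$ equivalent shifted tableaux, \Cref{prop:7.1BuchSamuel} immediately yields that $T^2$ and $T'^2$ are $\kjdt$ equivalent \emph{ordinary} tableaux. As $T$ and $T'$ are straight, so are $T^2$ and $T'^2$, so \Cref{thm:kjdtequivIdeal} applies and tells us that $T^2$ and $T'^2$ have the same minimal ideal.

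The remaining step is a bookkeeping argument translating between the minimal ideal of a shifted tableau and that of its reflection. Because rank is invariant under transposing coordinates, $\rank(i,j) = \rank(j,i) = i+j-1$, a box $(i,j)$ with $i \le j$ satisfies $T(i,j) = \rank(i,j)$ if and only if both $(i,j)$ and its mirror $(j,i)$ satisfy $T^2 = \rank$ at those boxes, using the defining identity $T^2(j,i) = T(i,j)$. Hence the minimal ideal of $T^2$ is exactly the symmetrization $\{(i,j),(j,i) : (i,j) \in I_0\}$ of $I_0$, and in particular $I_0$ is recovered as the on-or-above-diagonal part of the minimal ideal of $T^2$. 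The same holds for $T'$ and $I_0'$. Therefore equality of the minimal ideals of $T^2$ and $T'^2$ forces $I_0 = I_0'$, completing the argument.

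The bulk of the work has already been absorbed into \Cref{thm:kjdtequivIdeal} (via the Hecke-permutation invariant and \Cref{lem:HeckeMinIdeal}) and into \Cref{prop:7.1BuchSamuel}; the only genuinely new content is the dictionary between the minimal ideals of $T$ and $T^2$. The one point that deserves care is that $T \mapsto T^2$ restricts to a bijection between order ideals of the shifted shape and \emph{symmetric} order ideals of the doubled ordinary shape, so that equality of the ordinary minimal ideals is equivalent to, and not merely a consequence of, equality of the shifted minimal ideals; once this is observed the reduction is immediate.
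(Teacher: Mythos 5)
Your proposal is correct and follows essentially the same route as the paper: the paper likewise reduces to the ordinary case via the reflection $T \mapsto T^2$, invokes Proposition~\ref{prop:7.1BuchSamuel} to transfer $\kjdt$ equivalence, applies Theorem~\ref{thm:kjdtequivIdeal}, and uses the observation $\rank(i,j)=\rank(j,i)$ to identify the minimal ideal of $T^2$ with the symmetrization of $I_0$. Your write-up makes the final dictionary between $I_0$ and the minimal ideal of $T^2$ slightly more explicit than the paper does, but the argument is the same.
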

\setcounter{section}{1}
\setcounter{restate}{2}
\begin{restate}[shifted shape]
\label{cor:uniquetrapideals}
    For any strict partition $\lambda$, all almost-minimal tableaux of shape $\lambda$ are in separate weak $K$-Knuth equivalence classes.
\end{restate}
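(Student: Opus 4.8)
The goal is to prove \Cref{cor:uniquetrapideals}: for any strict partition $\lambda$, all almost-minimal shifted tableaux of shape $\lambda$ lie in separate weak $K$-Knuth equivalence classes. The plan is to reduce the shifted statement to the ordinary one that has already been established in \Cref{cor:uniqueidealsTypeA}, using the doubling construction $T \mapsto T^2$ together with \Cref{prop:7.1BuchSamuel}.

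First I would recall the two structural facts that make almost-minimal tableaux rigid. By \Cref{def:almost}, an almost-minimal tableau is determined by its shape together with its minimal ideal $I_0 = \{s : T(s) - \rank(s) = 0\}$, so it suffices to show that two almost-minimal shifted tableaux of the same shape with distinct minimal ideals cannot be weak $K$-Knuth equivalent. By the theorem of Buch--Samuel cited in the excerpt, weak $K$-Knuth equivalence of shifted tableaux is exactly $\kjdt$ equivalence, so the statement we actually need is the shifted analogue of \Cref{thm:kjdtequivIdeal}, namely \Cref{thm:kjdtequivIdealShifted}: that $\kjdt$-equivalent straight shifted tableaux have the same minimal ideal. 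The corollary then follows immediately, since distinct almost-minimal tableaux of the same shape have distinct minimal ideals.

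Next I would prove \Cref{thm:kjdtequivIdealShifted} by transporting the problem across the diagonal doubling map. Suppose $T$ and $T'$ are $\kjdt$-equivalent straight shifted tableaux with minimal ideals $I_0$ and $I_0'$. By \Cref{prop:7.1BuchSamuel}, their doubles $T^2$ and $T'^2$ are $\kjdt$-equivalent \emph{ordinary} tableaux, and they are straight since $T,T'$ are straight. The key compatibility observation, already noted in the excerpt, is that because $\rank(i,j) = \rank(j,i) = i+j-1$ in an ordinary diagram, the defining condition $T^2(s) - \rank(s) = 0$ is symmetric under reflection, so the minimal ideal of $T^2$ is precisely the reflected double of $I_0$, and likewise for $T'^2$. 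Hence $I_0 = I_0'$ if and only if the minimal ideals of $T^2$ and $T'^2$ coincide. Applying the already-proven ordinary result \Cref{thm:kjdtequivIdeal} to the $\kjdt$-equivalent ordinary tableaux $T^2$ and $T'^2$ forces those minimal ideals to be equal, and unreflecting gives $I_0 = I_0'$.

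The main obstacle—and the reason this reduction is not entirely formal—is verifying that the doubling map interacts correctly with both the minimal-ideal condition and straightness, i.e.\ that $T^2$ genuinely lands among the \emph{straight} ordinary tableaux to which \Cref{thm:kjdtequivIdeal} applies and that reflection sets up an honest bijection between the minimal ideal of $T$ and that of $T^2$. Once one checks that $T$ is almost minimal precisely when $T^2$ is, and that equality of the shifted minimal ideals is equivalent to equality of the ordinary minimal ideals of the doubles, the two cited results \Cref{prop:7.1BuchSamuel} and \Cref{thm:kjdtequivIdeal} slot together with no further computation. I would therefore present the argument as a short reduction, leaning on the observations about rank symmetry recorded just before the statement, rather than re-running any Hecke-permutation analysis in the shifted setting directly—the latter being avoidable precisely because the ordinary case already isolates the minimal ideal as a $\kjdt$ invariant.
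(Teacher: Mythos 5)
Your proposal is correct and follows essentially the same route as the paper: reduce the shifted statement to the ordinary one via the doubling map $T \mapsto T^2$, using \Cref{prop:7.1BuchSamuel}, the rank symmetry $\rank(i,j)=\rank(j,i)$ to identify minimal ideals of $T$ and $T^2$, and then \Cref{thm:kjdtequivIdeal} together with the fact that an almost minimal tableau is determined by its shape and minimal ideal.
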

\setcounter{section}{4}

\begin{remark}
    A \textit{unique rectification target} is a straight tableau $T$ such that it is the only straight tableau in its $\kjdt$ equivalence class.  Unique rectification targets are crucial to the $K$-theoretic origins of $\kjdt$ in \cite{TY09}.  They have been further studied in \cite{buch2016k} and \cite{gaetz2016k}, the former of whom showed that minimal tableaux are unique rectification targets.  Similar to \cite[Proposition 2.43]{gaetz2016k}, Theorem~\ref{thm:kjdtequivIdeal} describes an invariant of $\kjdt$ rectifications and could have nice implications for unique rectification targets.  One might ask if almost minimal tableaux are unique rectification targets but this is not always the case, as the below example shows:
    \begin{example}\cite[Example 7.4]{gaetz2016k}
All the tableaux below are in the same $K$-Knuth equivalence class.
\begin{align*}
   \begin{ytableau}
5 \\
4 \\
2 \\
1 & 2 & 3 & 5 & 6
\end{ytableau} \hspace{.3 cm}
   \begin{ytableau}
5 \\
4 \\
2 & 4 \\
1 & 2 & 3 & 5 & 6
\end{ytableau} \hspace{.3 cm}
   \begin{ytableau}
5 \\
4 \\
2 & 5 \\
1 & 2 & 3 & 5 & 6
\end{ytableau} \hspace{.3 cm}
   \begin{ytableau}
5 \\
4  \\
2 & 4 & 5\\
1 & 2 & 3 & 5 & 6
\end{ytableau}
\end{align*}
\end{example}
\end{remark}

The last ingredient we will need to prove that the bijection of \cite{HPPW18} commutes with rowmotion on order ideals is the following corollary:
\begin{corollary}
\label{cor:intervalImpliesEqual}
Let $T,T'$ be two almost minimal (ordinary or shifted) tableaux of the same shape with maximal rank $r$. Then $T|_{[1,r]}$ is $\kjdt$ equivalent to $T'|_{[1,r]}$ if and only if $T=T'$.
\end{corollary}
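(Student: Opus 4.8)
The plan is to reduce the statement to the minimal-ideal invariance results already established, namely \Cref{thm:kjdtequivIdeal} in the ordinary case and \Cref{thm:kjdtequivIdealShifted} in the shifted case, by checking that passing from an almost minimal tableau to its restriction $T|_{[1,r]}$ does nothing more than delete certain maximal boxes while leaving the minimal ideal unchanged. Throughout I take $T,T'$ to be straight (non-skew), as these are the almost minimal tableaux coming from order ideals and the only ones to which the minimal-ideal theorems apply. The forward implication is immediate: if $T=T'$ then $T|_{[1,r]}$ and $T'|_{[1,r]}$ are literally the same tableau, hence $\kjdt$ equivalent via the empty sequence of slides.

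For the converse, I would first record that an almost minimal tableau of maximal rank $r$ has all entries in $\{1,\dots,r+1\}$, and that a box $s$ satisfies $T(s)=r+1$ exactly when $\rank(s)=r$ and $s\notin I_0$; every box of rank $<r$, and every rank-$r$ box lying in $I_0$, has entry at most $r$ and therefore survives in $T|_{[1,r]}$. Thus $T|_{[1,r]}$ is obtained from $T$ by deleting precisely the rank-$r$ boxes outside $I_0$. Two structural facts then make the reduction work. First, every box $s$ of maximal rank $r$ is a removable corner: a neighbor of $s$ lying above it or to its right would have rank $r+1$, which is impossible, so deleting any set of rank-$r$ boxes from the straight shape $\lambda$ again yields a straight shape, and $T|_{[1,r]}, T'|_{[1,r]}$ are again straight (ordinary or shifted) tableaux. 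Second, deleting maximal boxes shifts the rank of no remaining box, so the minimal ideal of $T|_{[1,r]}$ is exactly $\{s : T(s)=\rank(s)\}=I_0$, and likewise that of $T'|_{[1,r]}$ is $I_0'$.

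With these observations in place the conclusion is quick. Since $T|_{[1,r]}$ and $T'|_{[1,r]}$ are $\kjdt$ equivalent straight tableaux, \Cref{thm:kjdtequivIdeal} (ordinary) or \Cref{thm:kjdtequivIdealShifted} (shifted) forces $I_0=I_0'$ as subsets of $\lambda$. Because an almost minimal tableau is completely determined by its shape together with its minimal ideal, and $T,T'$ share the shape and now share the minimal ideal, we conclude $T=T'$. The only genuinely delicate point is verifying that restriction to $[1,r]$ strips off solely maximal (corner) boxes and preserves both straightness and the minimal ideal; everything else is bookkeeping, since the substantive work already lives in the minimal-ideal theorems. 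The role of this corollary is thus to \emph{upgrade} a hypothesis about the restricted tableaux into an honest equality of the full almost minimal tableaux, which is exactly the form needed for the application to $\phi$ and rowmotion.
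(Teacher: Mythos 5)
Your proof is correct and follows essentially the same route as the paper's: observe that $T|_{[1,r]}$ and $T'|_{[1,r]}$ retain the minimal ideals $I_0$ and $I_0'$, invoke the minimal-ideal invariance theorems to get $I_0=I_0'$, and conclude since an almost minimal tableau is determined by its shape and minimal ideal. The extra verification that restriction only removes maximal rank-$r$ corners (preserving straightness and ranks) is a detail the paper leaves implicit, and your explicit citation of the shifted-shape theorem alongside the ordinary one is a small improvement in precision.
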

\begin{proof}
($\Leftarrow$) If $T=T'$, then $T|_{[1,r]} = T'|_{[1,r]}$.\\
($\Rightarrow$) Suppose $T|_{[1,r]}$ is $\kjdt$ equivalent to $T'|_{[1,r]}$. Let $I_0$ be the minimal ideal of $T$ and $I_0'$ the minimal ideal of $T'$. Notice that $I_0,I_0'$ are also the minimal ideals of $T|_{[1,r]}$ and $T'|_{[1,r]}$, respectively. Then by Theorem~\ref{thm:kjdtequivIdeal}, $I_0 = I_0'$. Since almost minimal tableaux are completely determined by their shape and their ideal, $T=T'$.
\end{proof}

Recall from Proposition~\ref{prop:phiequiv} that
the bijection $\phi: \IT^{\ell}(\mathscr R(a,b)) \to \SIT^{\ell}[\mathscr T(a,b)]$ of \cite{HPPW18} preserves $\kjdt$
equivalence (and hence weak K-Knuth equivalence). Recall, as described in \Cref{subsec:YoungTabl}, that
we may restrict $\phi$ to almost minimal tableaux, a.k.a., order ideals, to get a bijection
$\phi\colon J(\mathscr R(a,b)) \to J(\mathscr T(a,b))$. Then we have the following corollary of \Cref{cor:uniquetrapideals}:
\begin{corollary}
\label{lem:hppwuniquematch}
The bijection $\phi$ acting on order ideals can be described as matching each almost minimal tableau of the rectangle with its unique weak $K$-Knuth equivalent almost minimal tableau of the trapezoid.
\end{corollary}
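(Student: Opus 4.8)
The plan is to read this statement off as a direct consequence of two facts already in hand: that $\phi$ preserves $\kjdt$ equivalence of shifted tableaux (Proposition~\ref{prop:phiequiv}), and that almost minimal shifted tableaux of a fixed shape are pairwise weak $K$-Knuth inequivalent (Corollary~\ref{cor:uniquetrapideals}). The one piece of bookkeeping to set up first is that the rectangle tableau and its image live in different ambient posets, so before we can even speak of their weak $K$-Knuth equivalence we must realize the ordinary almost minimal tableau $T \in \IT^{r_{\max}+1}(\mathscr R(a,b))$ as the skew shifted tableau $T'$ of Remark~\ref{rem:ordasshift} (with $n = a$), exactly as in step (1) of Definition~\ref{def:hppw_bijection}. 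This realization is what makes the two sides comparable as shifted tableaux.

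For existence of the match, I would first note that $\phi$ restricts to a bijection $J(\mathscr R(a,b)) \to J(\mathscr T(a,b))$ sending order ideals to order ideals, equivalently almost minimal ordinary tableaux to almost minimal shifted tableaux (Remark~\ref{rem:PPtoTableaux}); in particular $\phi(T)$ is an almost minimal \emph{straight} shifted tableau of shape $\mathscr T(a,b)$. By Proposition~\ref{prop:phiequiv}, $T'$ and $\phi(T)$ are $\kjdt$ equivalent shifted tableaux, hence weak $K$-Knuth equivalent by the Buch--Samuel characterization \cite[Theorem 7.8]{buch2016k}. Thus $\phi(T)$ is an almost minimal trapezoid tableau weak $K$-Knuth equivalent to the shifted realization of $T$.

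For uniqueness, suppose $U$ is any almost minimal straight shifted tableau of shape $\mathscr T(a,b)$ that is weak $K$-Knuth equivalent to $T'$. By transitivity of the equivalence, $U$ and $\phi(T)$ are weak $K$-Knuth equivalent, and both are almost minimal of the same strict-partition shape, so Corollary~\ref{cor:uniquetrapideals} forces $U = \phi(T)$. Hence $\phi$ is precisely the rule that sends each almost minimal rectangle tableau to the unique almost minimal trapezoid tableau in its weak $K$-Knuth class. I do not expect a genuine obstacle here: all the substantive work has been front-loaded into Proposition~\ref{prop:phiequiv} and Corollary~\ref{cor:uniquetrapideals}, so the only points demanding care are (a) the passage $T \mapsto T'$ so that the two sides are comparable as shifted tableaux, and (b) confirming that $\phi(T)$ is genuinely straight and almost minimal, which is exactly what is needed for Corollary~\ref{cor:uniquetrapideals} to apply.
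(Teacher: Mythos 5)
Your proposal is correct and follows exactly the route the paper intends: the paper states this corollary as an immediate consequence of Proposition~\ref{prop:phiequiv} (existence of the weak $K$-Knuth equivalent image, after the realization $T \mapsto T'$) together with Corollary~\ref{cor:uniquetrapideals} (uniqueness among almost minimal shifted tableaux of the trapezoid shape), which is precisely the two-step argument you give. The bookkeeping points you flag --- passing to the skew shifted realization and confirming $\phi(T)$ is straight and almost minimal via Remark~\ref{rem:PPtoTableaux} --- are exactly the ingredients the paper recalls immediately before stating the corollary.
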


%\julian{ugh}
\section{Commuting of Rowmotion and $\varphi$}\label{proofmain}
We have now built up the machinery to finally prove \Cref{thm:commute}.

Using the definition of rowmotion as inverse $K$-promotion, we will show that performing the above process preserves weak $K$-Knuth equivalence of order ideals of the rectangle and trapezoid i.e. if $I$ is weakly $K$-Knuth equivalent to $J$, then $\row^{-1}(I)$ is weakly $K$-Knuth equivalent to $\row^{-1}(J)$. By Corollary \ref{lem:hppwuniquematch} this implies that rowmotion inverse commutes with $\phi$ and thus rowmotion commutes with $\phi$.
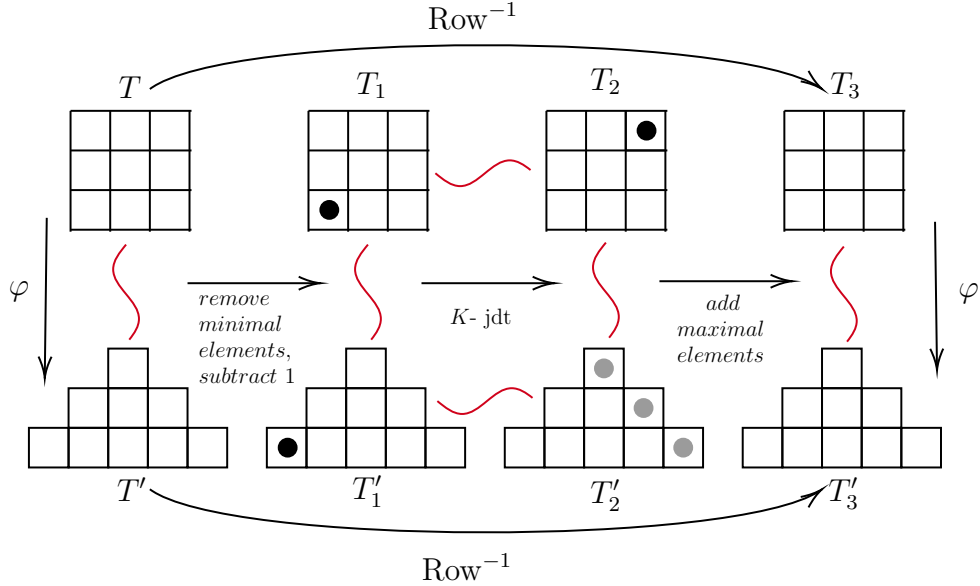
\begin{figure}[h]
    \centering
\tikzset{every picture/.style={line width=0.75pt}}
\begin{tikzpicture}[x=0.75pt,y=0.75pt,yscale=-1,xscale=1]
\draw  [draw opacity=0] (120,127) -- (181,127) -- (181,188) -- (120,188) -- cycle ; \draw   (120,127) -- (120,188)(140,127) -- (140,188)(160,127) -- (160,188)(180,127) -- (180,188) ; \draw   (120,127) -- (181,127)(120,147) -- (181,147)(120,167) -- (181,167)(120,187) -- (181,187) ; \draw    ;
\draw  [draw opacity=0] (360,127) -- (421,127) -- (421,188) -- (360,188) -- cycle ; \draw   (360,127) -- (360,188)(380,127) -- (380,188)(400,127) -- (400,188)(420,127) -- (420,188) ; \draw   (360,127) -- (421,127)(360,147) -- (421,147)(360,167) -- (421,167)(360,187) -- (421,187) ; \draw    ;
\draw  [draw opacity=0] (240,127) -- (301,127) -- (301,188) -- (240,188) -- cycle ; \draw   (240,127) -- (240,188)(260,127) -- (260,188)(280,127) -- (280,188)(300,127) -- (300,188) ; \draw   (240,127) -- (301,127)(240,147) -- (301,147)(240,167) -- (301,167)(240,187) -- (301,187) ; \draw    ;
\draw  [draw opacity=0] (480,127) -- (541,127) -- (541,188) -- (480,188) -- cycle ; \draw   (480,127) -- (480,188)(500,127) -- (500,188)(520,127) -- (520,188)(540,127) -- (540,188) ; \draw   (480,127) -- (541,127)(480,147) -- (541,147)(480,167) -- (541,167)(480,187) -- (541,187) ; \draw    ;
\draw   (99,287) -- (119,287) -- (119,307) -- (99,307) -- cycle ;
\draw   (119,287) -- (139,287) -- (139,307) -- (119,307) -- cycle ;
\draw   (139,287) -- (159,287) -- (159,307) -- (139,307) -- cycle ;
\draw   (159,287) -- (179,287) -- (179,307) -- (159,307) -- cycle ;
\draw   (179,287) -- (199,287) -- (199,307) -- (179,307) -- cycle ;
\draw   (119,267) -- (139,267) -- (139,287) -- (119,287) -- cycle ;
\draw   (139,267) -- (159,267) -- (159,287) -- (139,287) -- cycle ;
\draw   (159,267) -- (179,267) -- (179,287) -- (159,287) -- cycle ;
\draw   (139,247) -- (159,247) -- (159,267) -- (139,267) -- cycle ;
\draw   (239,287) -- (259,287) -- (259,307) -- (239,307) -- cycle ;
\draw   (259,287) -- (279,287) -- (279,307) -- (259,307) -- cycle ;
\draw   (279,287) -- (299,287) -- (299,307) -- (279,307) -- cycle ;
\draw   (299,287) -- (319,287) -- (319,307) -- (299,307) -- cycle ;
\draw   (239,267) -- (259,267) -- (259,287) -- (239,287) -- cycle ;
\draw   (259,267) -- (279,267) -- (279,287) -- (259,287) -- cycle ;
\draw   (279,267) -- (299,267) -- (299,287) -- (279,287) -- cycle ;
\draw   (259,247) -- (279,247) -- (279,267) -- (259,267) -- cycle ;
\draw   (339,287) -- (359,287) -- (359,307) -- (339,307) -- cycle ;
\draw   (359,287) -- (379,287) -- (379,307) -- (359,307) -- cycle ;
\draw   (379,287) -- (399,287) -- (399,307) -- (379,307) -- cycle ;
\draw   (399,287) -- (419,287) -- (419,307) -- (399,307) -- cycle ;
\draw   (419,287) -- (439,287) -- (439,307) -- (419,307) -- cycle ;
\draw   (359,267) -- (379,267) -- (379,287) -- (359,287) -- cycle ;
\draw   (379,267) -- (399,267) -- (399,287) -- (379,287) -- cycle ;
\draw   (379,247) -- (399,247) -- (399,267) -- (379,267) -- cycle ;
\draw   (459,287) -- (479,287) -- (479,307) -- (459,307) -- cycle ;
\draw   (479,287) -- (499,287) -- (499,307) -- (479,307) -- cycle ;
\draw   (499,287) -- (519,287) -- (519,307) -- (499,307) -- cycle ;
\draw   (519,287) -- (539,287) -- (539,307) -- (519,307) -- cycle ;
\draw   (539,287) -- (559,287) -- (559,307) -- (539,307) -- cycle ;
\draw   (479,267) -- (499,267) -- (499,287) -- (479,287) -- cycle ;
\draw   (499,267) -- (519,267) -- (519,287) -- (499,287) -- cycle ;
\draw   (519,267) -- (539,267) -- (539,287) -- (519,287) -- cycle ;
\draw   (499,247) -- (519,247) -- (519,267) -- (499,267) -- cycle ;
\draw    (400,127) -- (400,147) ;
\draw    (400,147) -- (420,147) ;
\draw  [color={rgb, 255:red, 208; green, 2; blue, 27 }  ,draw opacity=1 ] (305,273.5) .. controls (309.08,276.83) and (312.98,280) .. (317.5,280) .. controls (322.02,280) and (325.92,276.83) .. (330,273.5) .. controls (334.08,270.17) and (337.98,267) .. (342.5,267) .. controls (346.28,267) and (349.62,269.21) .. (353,271.88) ;
\draw  [color={rgb, 255:red, 208; green, 2; blue, 27 }  ,draw opacity=1 ] (304,158.5) .. controls (308.08,161.83) and (311.98,165) .. (316.5,165) .. controls (321.02,165) and (324.92,161.83) .. (329,158.5) .. controls (333.08,155.17) and (336.98,152) .. (341.5,152) .. controls (345.28,152) and (348.62,154.21) .. (352,156.88) ;
\draw  [color={rgb, 255:red, 208; green, 2; blue, 27 }  ,draw opacity=1 ] (270,194.5) .. controls (266.67,198.58) and (263.5,202.48) .. (263.5,207) .. controls (263.5,211.52) and (266.67,215.42) .. (270,219.5) .. controls (273.33,223.58) and (276.5,227.48) .. (276.5,232) .. controls (276.5,235.78) and (274.29,239.12) .. (271.62,242.5) ;
\draw  [color={rgb, 255:red, 208; green, 2; blue, 27 }  ,draw opacity=1 ] (390,193.5) .. controls (386.67,197.58) and (383.5,201.48) .. (383.5,206) .. controls (383.5,210.52) and (386.67,214.42) .. (390,218.5) .. controls (393.33,222.58) and (396.5,226.48) .. (396.5,231) .. controls (396.5,234.78) and (394.29,238.12) .. (391.62,241.5) ;
\draw  [color={rgb, 255:red, 208; green, 2; blue, 27 }  ,draw opacity=1 ] (510,195.5) .. controls (506.67,199.58) and (503.5,203.48) .. (503.5,208) .. controls (503.5,212.52) and (506.67,216.42) .. (510,220.5) .. controls (513.33,224.58) and (516.5,228.48) .. (516.5,233) .. controls (516.5,236.78) and (514.29,240.12) .. (511.62,243.5) ;
\draw  [color={rgb, 255:red, 208; green, 2; blue, 27 }  ,draw opacity=1 ] (148,194.5) .. controls (144.67,198.58) and (141.5,202.48) .. (141.5,207) .. controls (141.5,211.52) and (144.67,215.42) .. (148,219.5) .. controls (151.33,223.58) and (154.5,227.48) .. (154.5,232) .. controls (154.5,235.78) and (152.29,239.12) .. (149.62,242.5) ;
\draw    (108,181) -- (107.02,260) ;
\draw [shift={(107,262)}, rotate = 270.71] [color={rgb, 255:red, 0; green, 0; blue, 0 }  ][line width=0.75]    (10.93,-3.29) .. controls (6.95,-1.4) and (3.31,-0.3) .. (0,0) .. controls (3.31,0.3) and (6.95,1.4) .. (10.93,3.29)   ;
\draw    (179,214) -- (244,214) ;
\draw [shift={(246,214)}, rotate = 180] [color={rgb, 255:red, 0; green, 0; blue, 0 }  ][line width=0.75]    (10.93,-3.29) .. controls (6.95,-1.4) and (3.31,-0.3) .. (0,0) .. controls (3.31,0.3) and (6.95,1.4) .. (10.93,3.29)   ;
\draw    (297,214) -- (362,214) ;
\draw [shift={(364,214)}, rotate = 180] [color={rgb, 255:red, 0; green, 0; blue, 0 }  ][line width=0.75]    (10.93,-3.29) .. controls (6.95,-1.4) and (3.31,-0.3) .. (0,0) .. controls (3.31,0.3) and (6.95,1.4) .. (10.93,3.29)   ;
\draw    (417,213) -- (482,213) ;
\draw [shift={(484,213)}, rotate = 180] [color={rgb, 255:red, 0; green, 0; blue, 0 }  ][line width=0.75]    (10.93,-3.29) .. controls (6.95,-1.4) and (3.31,-0.3) .. (0,0) .. controls (3.31,0.3) and (6.95,1.4) .. (10.93,3.29)   ;
\draw    (556,183) -- (556.97,256) ;
\draw [shift={(557,258)}, rotate = 269.24] [color={rgb, 255:red, 0; green, 0; blue, 0 }  ][line width=0.75]    (10.93,-3.29) .. controls (6.95,-1.4) and (3.31,-0.3) .. (0,0) .. controls (3.31,0.3) and (6.95,1.4) .. (10.93,3.29)   ;
\draw    (160,116) .. controls (199.6,86.3) and (454.82,86.98) .. (497.76,115.14) ;
\draw [shift={(499,116)}, rotate = 216.63] [color={rgb, 255:red, 0; green, 0; blue, 0 }  ][line width=0.75]    (10.93,-3.29) .. controls (6.95,-1.4) and (3.31,-0.3) .. (0,0) .. controls (3.31,0.3) and (6.95,1.4) .. (10.93,3.29)   ;
\draw    (160,318) .. controls (198.61,346.71) and (454.8,347) .. (497.76,318.86) ;
\draw [shift={(499,318)}, rotate = 503.37] [color={rgb, 255:red, 0; green, 0; blue, 0 }  ][line width=0.75]    (10.93,-3.29) .. controls (6.95,-1.4) and (3.31,-0.3) .. (0,0) .. controls (3.31,0.3) and (6.95,1.4) .. (10.93,3.29)   ;
\draw   (219,287) -- (239,287) -- (239,307) -- (219,307) -- cycle ;
\draw   (399,267) -- (419,267) -- (419,287) -- (399,287) -- cycle ;
\draw  [fill={rgb, 255:red, 0; green, 0; blue, 0 }  ,fill opacity=1 ] (225,297) .. controls (225,294.51) and (227.01,292.5) .. (229.5,292.5) .. controls (231.99,292.5) and (234,294.51) .. (234,297) .. controls (234,299.49) and (231.99,301.5) .. (229.5,301.5) .. controls (227.01,301.5) and (225,299.49) .. (225,297) -- cycle ;
\draw  [fill={rgb, 255:red, 0; green, 0; blue, 0 }  ,fill opacity=1 ] (246,177) .. controls (246,174.51) and (248.01,172.5) .. (250.5,172.5) .. controls (252.99,172.5) and (255,174.51) .. (255,177) .. controls (255,179.49) and (252.99,181.5) .. (250.5,181.5) .. controls (248.01,181.5) and (246,179.49) .. (246,177) -- cycle ;
\draw  [fill={rgb, 255:red, 0; green, 0; blue, 0 }  ,fill opacity=1 ] (406,137) .. controls (406,134.51) and (408.01,132.5) .. (410.5,132.5) .. controls (412.99,132.5) and (415,134.51) .. (415,137) .. controls (415,139.49) and (412.99,141.5) .. (410.5,141.5) .. controls (408.01,141.5) and (406,139.49) .. (406,137) -- cycle ;
\draw  [color={rgb, 255:red, 155; green, 155; blue, 155 }  ,draw opacity=1 ][fill={rgb, 255:red, 155; green, 155; blue, 155 }  ,fill opacity=1 ] (404.5,277) .. controls (404.5,274.51) and (406.51,272.5) .. (409,272.5) .. controls (411.49,272.5) and (413.5,274.51) .. (413.5,277) .. controls (413.5,279.49) and (411.49,281.5) .. (409,281.5) .. controls (406.51,281.5) and (404.5,279.49) .. (404.5,277) -- cycle ;
\draw  [color={rgb, 255:red, 155; green, 155; blue, 155 }  ,draw opacity=1 ][fill={rgb, 255:red, 155; green, 155; blue, 155 }  ,fill opacity=1 ] (424.5,297) .. controls (424.5,294.51) and (426.51,292.5) .. (429,292.5) .. controls (431.49,292.5) and (433.5,294.51) .. (433.5,297) .. controls (433.5,299.49) and (431.49,301.5) .. (429,301.5) .. controls (426.51,301.5) and (424.5,299.49) .. (424.5,297) -- cycle ;
\draw  [color={rgb, 255:red, 155; green, 155; blue, 155 }  ,draw opacity=1 ][fill={rgb, 255:red, 155; green, 155; blue, 155 }  ,fill opacity=1 ] (384.5,257) .. controls (384.5,254.51) and (386.51,252.5) .. (389,252.5) .. controls (391.49,252.5) and (393.5,254.51) .. (393.5,257) .. controls (393.5,259.49) and (391.49,261.5) .. (389,261.5) .. controls (386.51,261.5) and (384.5,259.49) .. (384.5,257) -- cycle ;

\draw (94,218) node   {$\varphi $};
\draw (573,219) node   {$\varphi $};
\draw (209,242) node [scale=0.7]  {$ \begin{array}{l}
remove\ \\
minimal\ \\
elements,\\
subtract\ 1
\end{array}$};
\draw (448,238) node [scale=0.7]  {$ \begin{array}{l}
\ \ \ \ add\\
maximal\\
elements
\end{array}$};
\draw (325,232) node [scale=0.7]  {$\ K\textup{- jdt}$};
\draw (320,357) node   {$\row^{-1}$};
\draw (323,79) node   {$\row^{-1}$};
\draw (150,115) node   {$T$};
\draw (272,114) node   {$T_{1}$};
\draw (390,112) node   {$T_{2}$};
\draw (511,114) node   {$T_{3}$};
\draw (151,318) node   {$T'$};
\draw (270,319) node   {$T'_{1}$};
\draw (390,321) node   {$T'_{2}$};
\draw (510,320) node   {$T'_{3}$};

\end{tikzpicture}

    \caption{Commutative diagram for proof of Theorem~\ref{thm:commute}. The red squiggles indicate weak $K$-Knuth equivalence. Note that where the large dot(s) ends up in the second to rightmost trapezoid will depend on the order ideal.}
    \label{fig:HPPWCommutes}
\end{figure}

\begin{proof}[Proof of Theorem~\ref{thm:commute} in the case of $(\mathscr R(a,b),\mathscr T(a,b))$]
All tableaux in this proof are realized as shifted tableaux. Thus the $\kjdt$ equivalence relation on tableaux and the weak $K$-Knuth equivalence relation on tableaux are the same(see \Cref{subsec:K-jdt=Knuth}).\\

Let $T$ and $T'$ be almost minimal of the rectangle and trapezoid which are $\kjdt$ equivalent (by Corollary~\ref{lem:hppwuniquematch} this is equivalent to $\phi(T)=T'$). Let $r_m=a+b$ be the rank of maximal elements in the posets. Recall
the three step definition of $K$-promotion from Definition~\ref{def:k-pro}. Let $T_1,T_2,T_3$ be the results of performing steps 1, 1 and 2 and 1,2 and 3 of $K$-promotion respectively on $T$ and define $T'_1,T'_2,T'_3$ similarly for $T'$ (thus $T_3 =\row^{-1}(T)$ and $T'_3 = \row^{-1}(T')$ by Lemma~\ref{lem:k-pro}). By Lemma \ref{lem:ResInterval}, $T|_{[2,r_m+1]}$ and $T'|_{[2,r_m+1]}$ are $\kjdt$ equivalent. Thus $T_1$ and $T'_1$ are $\kjdt$ equivalent. Performing $\kjdt$ preserves $\kjdt$ equivalence, thus $T_2$ and $T'_2$ are $\kjdt$ equivalent. By Corollary~\ref{lem:hppwuniquematch}, $T_3$ is $\kjdt$ equivalent to an almost minimal tableau $T^*$ of the trapezoid. By Lemma \ref{lem:ResInterval}, $T^*|_{[1,r_m]}$ is $\kjdt$ equivalent to $T'_2 = T'_3|_{[1,r_m]}$. By Corollary \ref{cor:intervalImpliesEqual}, $T^* = T'_3$. Thus 
\[
    \row^{-1}(T)=T_3 \stackrel{\text{$\kjdt$}}{\equiv} T^* = T'_3 = \row^{-1}(T').
\]
Finally Corollary~\ref{lem:hppwuniquematch} implies that $\phi(\row^{-1}(T)) = \row^{-1}(T')=\row^{-1}(\phi(T))$. The fact
that $\phi$ commutes with $\row^{-1}$ of course implies that it commutes with $\row$ as well.
\end{proof}

The above proof completes all cases of Theorem~\ref{thm:commute}.

\section{Remaining Conjectures on \Dpg{} Pairs}\label{conjectures}
An increasing number of results, as well as computational evidence, suggest that the posets $\calP$ and $\mathcal Q$ in a minuscule \dpg\ pair $(\calP,\mathcal Q)$ are remarkably similar. Hopkins \cite{hopkins2019minuscule} conjectured a number of properties the members of a
minuscule \dpg\ pair share that posets with isomorphic comparability graphs share, and asked why these pairs behave ``as if they have isomorphic comparability graphs'' (see \cite[Section 4]{stanley1986two} for basic properties shared by posets with isomorphic comparability graphs). As with our Theorem \ref{thm:commute}, in most of these conjectures, the only difficult case is the one involving the trapezoid poset. Here we will discuss two of Hopkins’s conjectures, and how they relate to our main theorem.

\subsection{Down-Degree Statistic and Homomesy}\label{subsec:ddeg_statistic}

For a poset $\mathcal L$, the down-degree of an element $x \in \mathcal L$ (denoted $\ddeg(x)$) is the number of elements in $\mathcal L$ which $x$ covers. Recently, there has been a great deal of interest in understanding the down-degree statistic for certain posets $\mathcal L$, and in particular computing the expected value of this statistic with respect to various natural probability distributions on $\mathcal L$ (see \cite{reinerTennerYong2018} \cite{chan2017expected} \cite{hopkins2017cde}). In the case when $\mathcal L = J(\calP)$ is the set of order ideals of another poset $\calP$, we have that $\ddeg(I) = \#\max(I)$ for $I \in J(\calP)$. In this context, the down-degree statistic is also called the antichain cardinality statistic (as explained in Section \ref{subsec:poset}, $I \mapsto \max(I)$ is a bijection between the order ideals and
antichains of $\calP$).

Hopkins \cite[Conjecture 4.9]{hopkins2019minuscule} conjectured that for any minuscule \dpg\ pair (\Cref{fig:miniscule_pairs}) \[(P,Q) \in \{(\mathscr R(a,b),\mathscr T({a,b}),(OG(6,12),H_3),(\Q^{2n},I_2(2n))\},\]
there exists a bijection $\Phi$ between rowmotion orbits of $J(\calP)$ and $J(\mathcal Q)$ such that for any rowmotion orbit $\mathcal{O} \subseteq J(\calP)$
\begin{enumerate}
    \item $\# \mathcal{O}= \# \Phi(\mathcal{O})$,
    \item $\sum_{I \in \mathcal{O}} \ddeg(I)= \sum_{I \in \Phi(\mathcal{O})} \ddeg(I)$.
\end{enumerate} Hopkins showed that such a bijection $\Phi$ exists if $\calP$ and $\mathcal Q$ are posets with isomorphic comparability graphs.

Our Theorem \ref{thm:commute} shows that we can always find a bijection $\Phi$ satisfying at least condition (1). In order to show that condition (2) is also satisfied, ideally we would show that the bijection $\varphi: J(\mathscr R(a,b)) \to J(\mathscr T(a,b))$ preserves down-degree. But this is easily seen to be false: indeed, no bijection $J(\mathscr R(a,b)) \to J(\mathscr T(a,b))$ that commutes with rowmotion can preserve down-degree for $(a,b)=(3,4)$, as there exists a rowmotion orbit $\mathcal O$ of the rectangle with different multiset $\{\ddeg(I):I \in \mathcal O\}$ compared to any rowmotion orbit $\mathcal O'$ of the trapezoid. Instead, as we now explain, to show that (2) is satisfied we can establish a homomesy result.

\begin{definition}[\cite{propp2015homomesy}]
A statistic $f$ on a finite set $S$ is said to be \emph{homomesic} with respect to an invertible operator $\Psi:S \rightarrow S$ if for all $\Psi$-orbits $\mc O$, 
\[
\frac{1}{\#\mc O} \sum_{T \in \mc O} f(T) = \frac{1}{\#S} \sum_{T \in S} f(T). 
\]
\end{definition}

Propp and Roby \cite[Theorem 27]{propp2015homomesy} explained how the Stanley-Thomas word correspondence between order ideals of the rectangle under rowmotion and binary words under rotation easily implies that down-degree is homomesic with respect to the action of rowmotion on the order ideals of the rectangle. This result was extended to all minuscule posets in \cite{rush15}. Therefore, to show that condition (2) in Hopkins’s conjecture is satisfied (for any $\Phi$ satisfying condition (1)), we only need to show that down-degree is homomesic with respect to rowmotion for each of $\mathscr T({a,b}), H_3$, and $I_2(2n)$. For $H_3, I_2(2n), \mathscr T(1,b), \mathscr T(2,b)$, and $\mathscr T(a,a)$, this is known to be true \cite[Proposition 4.13]{hopkins2019minuscule}. In what follows we will prove this homomesy also for $\mathscr T({3,b})$, and explain how our approach might possibly be extended to prove homomesy for all $\mathscr T({a,b})$.

In fact, we will prove a slightly stronger condition, namely that for certain symmetric distributions $\mu$ of order ideals in $J(\mathscr T({a,b}))$, the expected down-degree is $ab/(a+b)$.
First, we define the \emph{antichain toggleability statistics}. Let $\calP$
be a poset. For an antichain $A$ of $\calP$ and an order ideal $I \in J(\calP)$, we define
\begin{align*}
\mc T_A^+(I) :&= \begin{cases}
1 & \textnormal{if }A \not \in I \textnormal{ and }A \cup I \textnormal{ is an ideal in } J(\calP)\\
0 & \textnormal{otherwise}
\end{cases}\\
\mc T_A^-(I) :&= \begin{cases}
1 & \textnormal{if }A \in I \textnormal{ and } I \setminus A \textnormal{ is an ideal in }J(\calP) \\
0 & \textnormal{otherwise}
\end{cases}\\
\mc T_A(I) :&= \mc T_A^+(I) - \mc T_A^-(I)
\end{align*}

We commonly use $p$ to denote a single element antichain associated to the element $p$ of the poset. When $\mc T_A(I) = 1$ we say the antichain $A$ can be \textit{toggled into} the ideal $I$ and when $\mc T_A(I) = -1$ we say the antichain $A$ can be \textit{toggled out} of $I$.

The toggleability statistics $\mc T_p$ for $p\in \calP$ were considered in \cite{chan2017expected}; in particular,
in \cite{chan2017expected} the authors called a probability distribution $\mu$ on $J(\calP)$ \textit{toggle-symmetric} if
for any $p \in \calP$,
\[
    \mathbb{E}[\mu; \mc T_p(I)] = 0.
\]
Here for a probability distribution $\mu$ on a finite set $X$
and a statistic $f:X \to \mathbb{R}$ we use $\mathbb{E}[\mu;f]$ to denote the expectation of $f$ with
respect to $\mu$. We call a distribution $\mu$ on ideals in a poset \emph{toggle on anitchains-symmetric} if for any fixed antichain $A$,
\[
    \mathbb{E}[\mu; \mc T_A(I)] = 0.
\]
Notice a toggle on antichains-symmetric distribution is a toggle-symmetric distribution. A distribution which is uniform on a rowmotion
orbit $\mathcal{O} \subseteq J(\calP)$ (and zero outside this orbit) is toggle-symmetric (see
\cite[Theorem 2.14]{chan2017expected}). In fact, the same reasoning implies such a distribution is
toggle on antichains-symmetric:
\begin{lemma}\label{lem:rowissymmetric}
   A distribution $\mu$ which is uniform on a rowmotion orbit $\mathcal{O}
\subseteq J(\calP)$ is toggle on antichains-symmetric. 
\end{lemma}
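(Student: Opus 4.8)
The plan is to reduce the statement to a purely orbit-theoretic identity and then exploit the defining property of rowmotion. Since $\mu$ is uniform on the orbit $\mathcal{O}$, we have $\mathbb{E}[\mu; \mc T_A(I)] = \frac{1}{\#\mathcal{O}}\sum_{I \in \mathcal{O}} \mc T_A(I)$, so it suffices to show $\sum_{I \in \mathcal{O}} \mc T_A^+(I) = \sum_{I \in \mathcal{O}} \mc T_A^-(I)$. The first step is to translate the two toggleability statistics into statements about the minimal and maximal elements of the relevant sets. I claim that for any antichain $A$,
\[
\mc T_A^+(I) = 1 \iff A \subseteq \min(\calP \setminus I), \qquad \mc T_A^-(I) = 1 \iff A \subseteq \max(I).
\]
For the first equivalence: $A \cup I$ being an ideal with $A \cap I = \emptyset$ forces every element strictly below some $a \in A$ to lie in $I$ (nothing below $a$ can lie in $A$, since $A$ is an antichain), which is exactly the condition that each $a$ is minimal in $\calP \setminus I$; the converse is an easy downward-closure check. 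The second equivalence is dual: $I \setminus A$ is an ideal precisely when no element of $A$ sits strictly below an element of $I$, i.e.\ when every $a \in A$ is maximal in $I$. These are the antichain analogues of the familiar single-element facts $\mc T_p^+(I) = [p \in \min(\calP\setminus I)]$ and $\mc T_p^-(I) = [p \in \max(I)]$.

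The second step is the key input from the structure of rowmotion. By definition $\row(I) = \langle \min(\calP \setminus I)\rangle$, and since the maximal elements of the order ideal generated by an antichain $B$ are exactly $B$, we obtain $\max(\row(I)) = \min(\calP \setminus I)$. Combining this with the characterizations above yields, for every antichain $A$ and every ideal $I$,
\[
\mc T_A^-(\row(I)) = \big[\,A \subseteq \max(\row(I))\,\big] = \big[\,A \subseteq \min(\calP \setminus I)\,\big] = \mc T_A^+(I).
\]
This pointwise identity is the heart of the argument and holds on all of $J(\calP)$, not merely on the orbit.

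Finally, because $\row$ restricts to a bijection of the orbit $\mathcal{O}$ onto itself, reindexing gives
\[
\sum_{I \in \mathcal{O}} \mc T_A^+(I) = \sum_{I \in \mathcal{O}} \mc T_A^-(\row(I)) = \sum_{J \in \mathcal{O}} \mc T_A^-(J),
\]
so $\sum_{I \in \mathcal{O}} \mc T_A(I) = 0$ and hence $\mathbb{E}[\mu; \mc T_A(I)] = 0$ for every antichain $A$, which is precisely toggle on antichains-symmetry. This mirrors the single-element argument of \cite[Theorem 2.14]{chan2017expected} verbatim. The only point requiring genuine (though routine) verification is the antichain-level characterization of $\mc T_A^\pm$ in the first step; once that is in hand, the proof for general antichains is formally identical to the single-element case, since the crucial relation $\max(\row(I)) = \min(\calP\setminus I)$ makes no reference to the size of $A$.
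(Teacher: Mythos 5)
Your proof is correct and follows essentially the same route as the paper, whose entire argument is the observation that $\mathcal{T}_A^+(I)=1$ if and only if $\mathcal{T}_A^-(\row(I))=1$; you simply supply the verification of this identity (via $\max(\row(I))=\min(\calP\setminus I)$) and the orbit-reindexing step that the paper leaves implicit.
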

\begin{proof}
    This follows immediately from the fact that $\mc T_A^+(I) = 1$ if and only if \\$\mc T_A^-(\row(I)) =1$.
\end{proof}

In \cite{chan2017expected} it was shown that for any toggle-symmetric distribution $\mu$ on $J(\mathscr R(a,b))$,
we have $\mathbb{E}[\mu;\ddeg] = (ab)/(a+b)$ (this combined with Lemma~\ref{lem:rowissymmetric} shows down-degree is homomesic with respect to the action of rowmotion on $J(\mathscr R(a,b))$.) In fact, in
\cite{chan2017expected} they proved that for a larger family of ordinary shapes $\lambda/\mu$, including the rectangle,
the expected down-degree is the same for any toggle-symmetric distribution on the order
ideals of $\lambda/\mu$; and in \cite{hopkins2017cde}, this result was extended to a family of shifted
shapes. However, it is \textit{not} the case that every toggle-symmetric distribution on
$J(\mathscr T(a,b))$ has the same expected down-degree (see \cite[Example 4.7]{hopkins2017cde}).
Nevertheless, we conjecture something slightly weaker is true:

\begin{conjecture}\label{conj:antichains}
For any toggle on antichains-symmetric distribution $\mu$ on $J(\mathscr T(a,b))$, $\mathbb{E}[\mu;\ddeg] = ab/(a+b)$.
\end{conjecture}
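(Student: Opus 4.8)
The plan is to prove the conjecture by the method of \emph{toggleability statistics}, reducing it to a single pointwise identity and then letting the hypothesis on $\mu$ do the rest. Since $\mu$ is toggle on antichains-symmetric, we have $\mathbb E[\mu;\mc T_A]=0$ for \emph{every} antichain $A$ of $\mathscr T(a,b)$. Consequently it suffices to produce a constant $c$ together with real coefficients $\gamma_A$, indexed by antichains $A$ (with only finitely many nonzero), such that
\[
    \ddeg(I) \;=\; c \;+\; \sum_{A}\gamma_A\,\mc T_A(I)
    \qquad\text{for all } I\in J(\mathscr T(a,b)).
\]
Granting such an identity, applying $\mathbb E[\mu;-]$ to both sides collapses the right-hand side to $c$, yielding $\mathbb E[\mu;\ddeg]=c$; the whole problem is thereby reduced to (a) constructing the $\gamma_A$ and (b) identifying $c=ab/(a+b)$.

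For the construction I would begin from the two elementary identities $\ddeg(I)=\#\max(I)=\sum_{p}\mc T_p^-(I)$ and $\#\min(\mathscr T(a,b)\setminus I)=\sum_{p}\mc T_p^+(I)$, whose difference gives $\#\max(I)=\#\min(\mathscr T(a,b)\setminus I)-\sum_p\mc T_p(I)$. Under a merely toggle-symmetric distribution this only shows $\mathbb E[\ddeg]=\mathbb E[\#\min(\text{complement})]$, which does not pin down any constant, and indeed for the trapezoid the single-element statistics genuinely do not suffice (cf.\ \cite[Example 4.7]{hopkins2017cde}). The idea is therefore to add correction terms supported on genuine multi-element antichains, exploiting the self-dual diagonal symmetry of the shifted trapezoid shape, so that the ``defect'' $\#\min(\text{complement})-\ddeg$ is itself rewritten inside the antichain-toggle span. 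This is precisely the step that forces the stronger toggle on antichains-symmetric hypothesis, and it is where essentially all of the work lies; I expect to carry it out explicitly for $\mathscr T(3,b)$ and to hunt for the pattern that would make the construction uniform in $a$.

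To pin down the constant once the identity is in hand, I would average over all order ideals. Because $\row$ is a bijection on $J(\mathscr T(a,b))$ interchanging $\mc T_A^+$ and $\mc T_A^-$ (exactly as in the proof of Lemma~\ref{lem:rowissymmetric}), we have $\sum_{I}\mc T_A(I)=0$ for each $A$, so summing the identity over all $I$ gives $c=\tfrac{1}{\#J(\mathscr T(a,b))}\sum_{I}\ddeg(I)$. Since $I\mapsto\max(I)$ is a bijection from order ideals to antichains, this equals the mean antichain size $\tfrac{1}{\#J}\sum_{A}|A|$, which a short computation identifies with $ab/(a+b)$ — the same value as for the rectangle by Propp--Roby, consistent with the doppelg\"anger philosophy that such comparability-graph statistics agree across the pair.

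The main obstacle is step (a): there is no a priori reason that the antichain-toggle span should contain $\ddeg-c$, and producing the coefficients $\gamma_A$ seems to require a rank-by-rank understanding of the antichain structure of $\mathscr T(a,b)$. This is exactly why the statement remains only a conjecture in general: the method succeeds for $\mathscr T(3,b)$ (which, together with the known cases $\mathscr T(1,b)$, $\mathscr T(2,b)$, $\mathscr T(a,a)$, $H_3$, and $I_2(2n)$, settles down-degree homomesy there), but a construction of the identity valid for all $a$ is the key missing ingredient.
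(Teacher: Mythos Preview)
This statement is a \emph{conjecture}: the paper does not prove it in general, only for $a\le 3$ (Theorem~\ref{thm:error0antichainsym}), and you correctly recognize that step~(a) --- constructing the coefficients $\gamma_A$ --- is the missing ingredient. Your high-level strategy is sound and in fact equivalent to the conjecture: by linear duality, $\mathbb E[\mu;\ddeg]=c$ for every toggle on antichains-symmetric $\mu$ holds if and only if $\ddeg-c$ lies in the span of the $\mc T_A$. So there is no ``gap'' beyond the one you acknowledge.

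Where your outline diverges from the paper is in the concrete machinery. The paper does not start from the bare identity $\ddeg=\sum_p\mc T_p^-$; instead it imports the \emph{rook} statistics $R_{i,j}$ of \cite{chan2017expected,hopkins2017cde}, which are specific linear combinations of $\mc T_A^\pm$ satisfying $R_{i,j}(I)=1$ for every $I$. A carefully chosen combination of rooks (Lemma~\ref{lem:rookerrorterm}) yields a \emph{pointwise} identity expressing $(a+b)\ddeg-ab$ in terms of single-element $\mc T_p$'s plus an explicit ``error'' supported on the diagonal boxes $(i,i)$ and the two-element maximal antichains $\{(i,\lambda_i),(i+1,\lambda_{i+1})\}$. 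The constant $ab/(a+b)$ therefore drops out of the construction itself, with no appeal to averaging. The remaining work (Lemmas~\ref{lem:errorterm2is0}, \ref{lem:2elmadj}, and the case analysis in the proof of Theorem~\ref{thm:error0antichainsym}) is to show this error has zero expectation under toggle on antichains-symmetry when $a\le 3$, using further hand-built pointwise identities among the $\mc T_A^\pm$.

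One caution about your step~(b): identifying $c$ by averaging over $J(\mathscr T(a,b))$ requires knowing \emph{independently} that the uniform mean antichain size of the trapezoid equals $ab/(a+b)$. You call this ``a short computation,'' but it is not obviously so --- unlike the rectangle, the trapezoid is not known to be toggle-CDE, and $\varphi$ does not preserve down-degree. The rook approach sidesteps this entirely by producing the constant directly; if you pursue your route, you should either extract $c$ from the construction or supply a genuine argument for the uniform average.
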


Observe that Conjecture~\ref{conj:antichains} together with Lemma~\ref{lem:rowissymmetric} would imply that down-degree is
homomesic with respect to the action of rowmotion on $J(\mathscr T(a,b))$ (and hence resolve the
conjecture of Hopkins mentioned at the beginning of this section).  Conjecture~\ref{conj:antichains} is
known to be true for $a=1,2,b$ (see \cite{hopkins2017cde}). We show that it also holds for $a=3$; that is,
our main result in this section is:
\begin{theorem}
\label{thm:error0antichainsym}
    For any toggle on anitchains-symmetric distribution $\mu$ on $J(\mathscr{T}(a,b))$ with $a \leq 3$
    \[
    \mathbb{E}[\mu; \ddeg] = \frac{ab}{a+b}.
    \]
\end{theorem}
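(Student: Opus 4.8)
The strategy is the \emph{toggleability-statistic} method. It suffices to exhibit real coefficients $c_A$, indexed by the antichains $A$ of $\mathscr{T}(a,b)$, for which the pointwise identity
\[
\ddeg = \frac{ab}{a+b}\,\mathbf{1} + \sum_{A} c_A\, \mathcal{T}_A
\]
holds as an identity of functions on $J(\mathscr{T}(a,b))$, where $\mathbf{1}$ is the constant function $1$. Indeed, if $\mu$ is toggle on antichains-symmetric then $\mathbb{E}[\mu;\mathcal{T}_A]=0$ for every $A$ while $\mathbb{E}[\mu;\mathbf{1}]=1$, so taking $\mu$-expectations of both sides yields $\mathbb{E}[\mu;\ddeg]=ab/(a+b)$ at once. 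This is exactly where the antichain hypothesis enters: for the rectangle the analogous identity already holds using only the singleton statistics $\mathcal{T}_p$, so there $\mathbb{E}[\mu;\ddeg]$ is constant on \emph{all} toggle-symmetric distributions, whereas for the trapezoid genuine multi-element antichains $A$ are needed (consistent with the remark that not every toggle-symmetric distribution on $J(\mathscr{T}(a,b))$ shares the same expected down-degree).

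First I would record the elementary bookkeeping that turns $\ddeg$ into toggle data. Since $\mathcal{T}_p^-(I)=1$ exactly when $p\in\max(I)$, we have $\ddeg(I)=\sum_{p}\mathcal{T}_p^-(I)$; dually $\mathrm{updeg}(I):=\#\min(\calP\setminus I)=\sum_p\mathcal{T}_p^+(I)$, and $\sum_p\mathcal{T}_p=\mathrm{updeg}-\ddeg$. Consequently $\mathbb{E}[\mu;\ddeg]=\mathbb{E}[\mu;\mathrm{updeg}]$ for any toggle-symmetric (hence any toggle on antichains-symmetric) $\mu$, so the target is equivalent to showing the expected \emph{jaggedness} $\mathbb{E}[\mu;\ddeg+\mathrm{updeg}]$ equals $2ab/(a+b)$. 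Passing to jaggedness is a convenient reformulation because it is the symmetric local sum $\sum_p(\mathcal{T}_p^+ + \mathcal{T}_p^-)$ counting the Hasse edges at $I$ in $J(\mathscr{T}(a,b))$, which guides the correct ansatz for the $c_A$; it does not by itself remove the need for multi-element antichains, since a sum of $\mathcal{T}_p^+ + \mathcal{T}_p^-$ is not a combination of the differences $\mathcal{T}_A=\mathcal{T}_A^+-\mathcal{T}_A^-$.

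The construction of the coefficients is the core of the proof, and it is here that $a\le 3$ is used. The cases $a=1,2$ are already known, so the content is $a=3$. I would build the combination $\sum_A c_A\mathcal{T}_A$ explicitly and uniformly in $b$ by organizing $\mathscr{T}(3,b)$ into its three rows and using only antichains of bounded complexity (singletons, together with the small antichains supported on two or three consecutive diagonals). Setting
\[
E:=\ddeg-\frac{ab}{a+b}\,\mathbf{1}-\sum_A c_A\,\mathcal{T}_A,
\]
the claim becomes $E\equiv 0$ on $J(\mathscr{T}(3,b))$. I would verify this by evaluating $E$ at an arbitrary ideal $I$ and reducing to the local shape of the boundary between $I$ and $\calP\setminus I$: because the poset has only three rows, each $\mathcal{T}_A(I)$ depends only on a bounded window of that boundary, so $E(I)$ is determined by finitely many boundary configurations. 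The coefficients are then pinned down by forcing cancellation in each of these windows, with the $a=1,2$ identities and the rectangle identity serving as templates for the ansatz.

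The main obstacle is precisely this last step: choosing the antichains and coefficients so that $E$ vanishes \emph{identically} (not merely in expectation), via a single formula valid for all $b$, and then confirming $E\equiv 0$ through the finite case analysis on boundary windows. For $a\le 3$ the windows are few enough that exact cancellation can be arranged and checked; for $a\ge 4$ no such antichain identity is established, which is why the theorem is confined to $a\le 3$ and the general statement remains Conjecture~\ref{conj:antichains}. I expect the two reductions above to be routine and essentially all the difficulty to be concentrated in guessing the right antichains and coefficients and in the bounded verification that $E\equiv 0$.
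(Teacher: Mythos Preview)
Your overall strategy—the toggleability-statistic method—is correct and is the philosophy the paper uses, but the paper does not attempt to produce a single pointwise identity $\ddeg=\frac{ab}{a+b}\,\mathbf{1}+\sum_A c_A\,\mathcal{T}_A$. Instead it proceeds in two stages. First, it uses the \emph{rook} statistics $R_{i,j}$ (linear combinations of $\mathcal{T}^\pm_p$ together with a few $\mathcal{T}^-_A$ for two-element maximal antichains, each satisfying the pointwise identity $R_{i,j}\equiv 1$) to obtain, for any toggle-symmetric $\mu$, an \emph{expectation-level} formula
\[
\mathbb{E}[\mu;\ddeg]=\frac{ab}{a+b}+\frac{a-b}{a+b}\Bigl(\sum_{i}(a-i)\,\mathbb{E}[\mu;\mathcal{T}^+_{(i,i)}]-\sum_{i} i\,\mathbb{E}[\mu;\mathcal{T}^-_{\{(i,\lambda_i),(i+1,\lambda_{i+1})\}}]\Bigr).
\]
Second, it shows that for toggle on antichains-symmetric $\mu$ and $a\le 3$ this error vanishes, by exhibiting further explicit functions $f\equiv 0$ built from $\mathcal{T}_A$, $\mathcal{T}_A^+$, $\mathcal{T}_A^-$ (now for two- and three-element antichains) and taking expectations. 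The hardest step, for $a=3$, is a specific telescoping identity proving $\mathbb{E}[\mu;\mathcal{T}^-_{(1,3)}]=\mathbb{E}[\mu;\mathcal{T}^-_{\{(1,\lambda_1),(2,\lambda_2)\}}]$. So the paper works with $\mathcal{T}_A^+$ and $\mathcal{T}_A^-$ separately throughout, never packaging everything as a combination of the differences $\mathcal{T}_A$.

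Your pointwise identity does exist a posteriori: the paper's argument uses only linearity and the hypotheses $\mathbb{E}[\mu;\mathcal{T}_A]=0$, hence applies to all signed measures satisfying those constraints, and duality then forces $\ddeg-\frac{ab}{a+b}\in\mathrm{span}\{\mathcal{T}_A\}$. But your plan for \emph{finding and verifying} it is vague exactly where it matters. The claim that ``$E(I)$ is determined by finitely many boundary configurations'' is not literally true—the number of ideals grows with $b$, and the coefficients $c_A$ cannot be purely local (already for $a=1$ one computes $c_{\{k\}}=(k-1-b)/(b+1)$, depending on both the position $k$ and on $b$). The paper's staged route avoids having to guess a global ansatz: the rook identities are already established and collapse the problem to $O(a)$ specific expectations, and the vanishing for $a\le 3$ is then handled by a handful of explicit, hand-checkable identities among antichain-toggle statistics rather than a single large one. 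What each buys: your formulation is cleaner conceptually and, if carried out, would give a self-contained one-line proof; the paper's approach is more modular and makes the actual computations tractable by leveraging the existing rook machinery.
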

\begin{corollary}
For $a \leq 3$, down-degree is homomesic with respect to the action of rowmotion
on $J(\mathscr T(a,b))$.
\end{corollary}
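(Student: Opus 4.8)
The plan is to reduce the statement to a single pointwise identity on order ideals and then integrate against $\mu$. Precisely, I would look for real coefficients $c_A$, indexed by antichains $A$ of $\mathscr T(a,b)$, together with a constant $c$, such that
\begin{equation*}
\ddeg(I) \;=\; c \;+\; \sum_{A} c_A\, \mc T_A(I) \qquad \text{for every } I \in J(\mathscr T(a,b)). \tag{$\star$}
\end{equation*}
Granting such an identity the theorem is immediate: applying $\mathbb{E}[\mu;\,\cdot\,]$ to both sides and using that $\mu$ is toggle on antichains-symmetric, every term $\mathbb{E}[\mu;\mc T_A(I)]$ vanishes, leaving $\mathbb{E}[\mu;\ddeg] = c$. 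It then remains only to check that $c = ab/(a+b)$, which can be read off by evaluating $(\star)$ at a convenient ideal (e.g.\ $I = \emptyset$, where $\ddeg = 0$ and only the $\mc T_A^+$ contributions survive) or by comparing against the uniform distribution on all of $J(\mathscr T(a,b))$.

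The starting point is the elementary observation $\ddeg(I) = \#\max(I) = \sum_{p} \mc T_p^-(I)$, since the elements that can be toggled out of $I$ are exactly its maximal elements. The content of $(\star)$ is thus to rewrite this sum of ``toggle-out'' indicators $\mc T_p^-$ as a combination of the \emph{signed} antichain statistics $\mc T_A = \mc T_A^+ - \mc T_A^-$ plus a constant. For the rectangle this can be done using only single-element antichains (this is essentially the computation of \cite{chan2017expected}); the essential new feature of the trapezoid is that single-element toggles do not suffice near the shifted ``diagonal'' boundary, which is precisely why genuine antichains must be admitted and why the refined toggle on antichains-symmetric hypothesis is needed rather than mere toggle-symmetry.

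Constructing the coefficients $c_A$ is the heart of the argument, and it is here that the hypothesis $a \le 3$ enters. Because the width of $\mathscr T(a,b)$ is controlled by $a$, every antichain has at most three elements and the local configuration of an ideal near any element falls into a bounded list independent of $b$. I would therefore (i) guess an explicit, translation-uniform family of antichains concentrated near each element, together with coefficients that match the rectangle assignment away from the diagonal and correct it along the diagonal, and then (ii) verify $(\star)$ by a finite case analysis: fix a local window, enumerate the possible restrictions of $I$ to that window, and check that the two sides of $(\star)$ agree, with all contributions telescoping to the constant $c$. The main obstacle is exactly steps (i)--(ii): pinning down the correct antichain weights along the diagonal boundary and confirming that the cancellation is exact and uniform in $b$. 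For $a \le 3$ the bounded local geometry makes this a finite, if intricate, verification; for general $a$ the existence of such a clean uniform expansion is what remains open, which is why the statement for all $\mathscr T(a,b)$ can at present only be posed as Conjecture~\ref{conj:antichains}.
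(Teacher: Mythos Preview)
Your plan is the right shape and is in fact logically equivalent to what the paper proves: the corollary follows immediately from Lemma~\ref{lem:rowissymmetric} and Theorem~\ref{thm:error0antichainsym}, and the latter is \emph{equivalent} to your pointwise identity $(\star)$. Indeed, since the uniform distribution lies in the interior of the probability simplex and is toggle on antichains-symmetric, a function has the same expectation under every toggle on antichains-symmetric $\mu$ if and only if it differs from that constant by an element of $\mathrm{span}\{\mc T_A\}$. So for $a\le 3$ the coefficients $c_A$ you seek are guaranteed to exist.

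The gap is that you have reduced the problem to exactly its hard part and then stopped. ``Guess a translation-uniform family of antichain weights, then verify by finite local case analysis'' is a description of what needs doing, not a proof; nothing in your text pins down a single coefficient or carries out a single case. The paper does not proceed by bare guess-and-check: it builds the identity out of the \emph{rook} statistics $R_{i,j}$, each a specific linear combination of $\mc T_p^{\pm}$ and two-element $\mc T_A^{-}$ satisfying $R_{i,j}(I)=1$ for every $I$. A judicious linear combination of rooks yields the pointwise identity behind Lemma~\ref{lem:rookerrorterm}, expressing $(a+b)\,\ddeg$ as $ab$ plus $\mc T_p$-terms plus an explicit ``error'' supported on the diagonal elements $(i,i)$ and the top antichains $\{(i,\lambda_i),(i+1,\lambda_{i+1})\}$. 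What remains is to absorb this error into $\mathrm{span}\{\mc T_A\}$: Lemma~\ref{lem:errorterm2is0} and Lemma~\ref{lem:2elmadj} do most of this uniformly in $a$, and the final step for $a=3$ exhibits an explicit combination of three-element $\mc T_A$'s that vanishes identically. If you want your direct route to succeed you need comparable explicit ingredients; the rook framework is precisely what supplies them, and without it your steps (i)--(ii) remain an aspiration rather than an argument.
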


Before we proceed, recall the labeling of the trapezoid poset which is induced from the labeling of $\{(a,b) \in \mathbb{N}^2| a \leq b\}$, namely the minimal element is labeled $(1,1)$ and the maximal elements are labeled $(i,a+b+1-i)$
%Before we proceed we need a way of of labeling the elements of our trapezoid poset. We use the Cartesian coordinates with the minimal element being $(0,0)$ and the maximal elements having coordinates $(i,a+b-1-2i)$ 
for $0 \leq i < a$, see Figure \ref{fig:rooks} for an example.  Note that as compared to the normal Cartesian plane, in our coordinate system the horizontal coordinate is the second coordinate.  For notational convenience, when dealing with the trapezoid poset $\mathscr T(a,b)$, we use $\lambda_i$ as shorthand for $a+b+1-i$.

\begin{definition}
    A \emph{rook} on the $(i,j)$ square of a trapezoid $\mathscr{T}(a,b)$ is a linear combination of statistics
    \[
    R_{i,j}:J(\mathscr{T}(a,b)) \rightarrow \mathbb{R}
    \]
    \[
    R_{i,j}(a \cdot I) = a \cdot \left( \sum_p \left(c_p^- \mc T_p^-(I) + c_p^+ \mc T_p^+(I)\right) + \sum_{i=1}^{a-1} c_{\{(i',\lambda_{i'}),(i'+1,\lambda_{i'+1})\}}^- T_{\{(i',\lambda_{i'}),(i'+1,\lambda_{i'+1})\}}^-(I) \right)
    \]
    where 
\begin{align*}
    c_{(i',j')}^- &= \begin{cases}
    1 & \textnormal{if } i' \geq i \textnormal{ and } j' \geq j\\
    -1 & \textnormal{if } i' < i \textnormal{ and } j' < j \textnormal{ and } j' > i'\\
    0 & \textnormal{otherwise}
    \end{cases}\\
    c_{(i',j')}^+ &= \begin{cases}
1 & \textnormal{if } i' \leq i \textnormal{ and } j' \leq j\\
    -1 & \textnormal{if } i' > i \textnormal{ and } j' > j \textnormal{ and } j' > i'\\
    0 & \textnormal{otherwise}
    \end{cases}\\
    c_{\{(i',\lambda_{i'}),(i'+1,\lambda_{i'+1}\}}^- &= \begin{cases}
    -1 & \textnormal{if } i' \geq i \textnormal{ and } \lambda_{i'+1} \geq j\\
    0 & \textnormal{otherwise}\\
    \end{cases}.
\end{align*}
\end{definition}

We say that the rook \emph{attacks} a square $p$ if $c_p^-+c_p^+ \ne 0$, in which case we will say the rook \emph{attacks} $p$, $c_p^-+c_p^+$ times. As we see in Figure \ref{fig:rooks}, a rook on the trapezoid will attack the squares that lie in the same row and column, with the exception that once we reach the leftmost or upmost part of the row/column, the rook starts attacking squares (or pairs of squares) that lie on the diagonal.

\begin{remark}
    The name ``rook'' comes from \cite{chan2017expected} where they used similar equations on various tableaux. On rectangular tableaux, the rook attacked all the squares in the same row and column as it; hence the name ``rook.''
\end{remark}

Figure~\ref{fig:rooks} gives a visualization of two rooks.

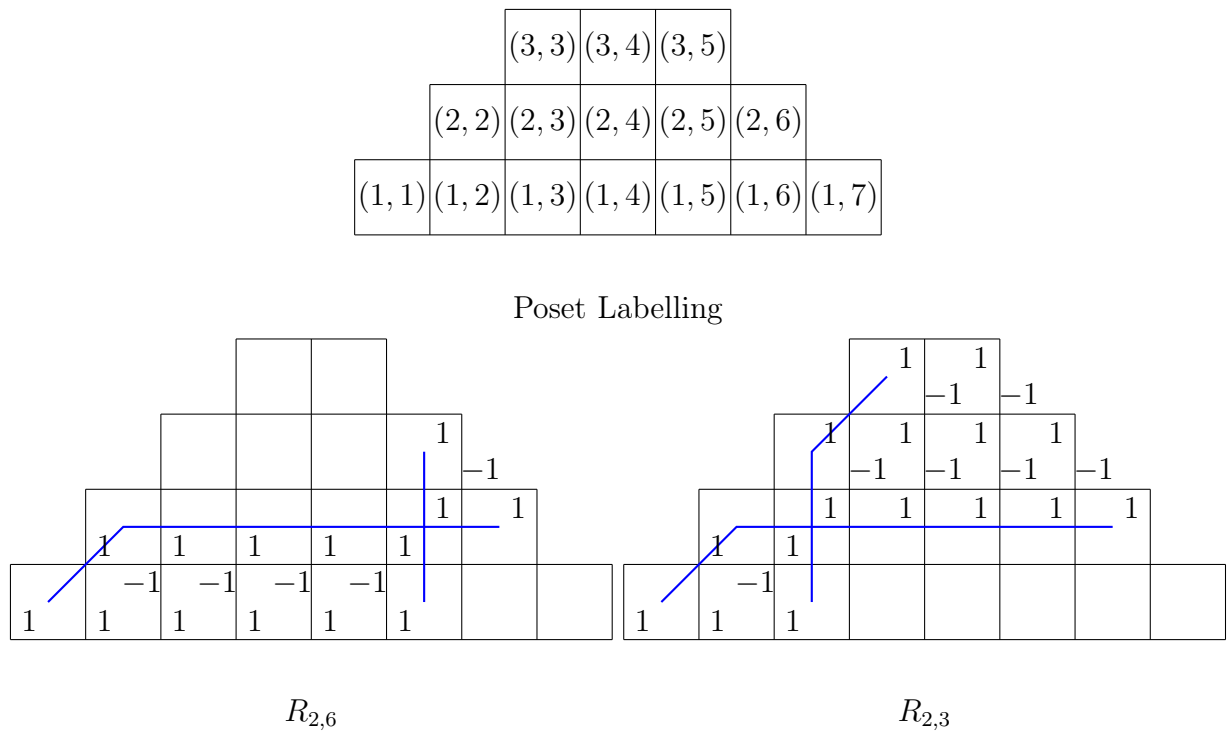
\begin{figure}
    \centering
\begin{tikzpicture}
    \draw [-] (5,2)--(2,2);
    \draw [-] (6,1)--(1,1);
    \draw [-] (7,0)--(0,0);
    \draw [-] (7,-1)--(0,-1);
    \draw [-] (5,-1)--(5,2);
    \draw [-] (6,-1)--(6,1);
    \draw [-] (1,-1)--(1,1);
    \draw [-] (0,-1)--(0,0);
    \draw [-] (7,-1)--(7,0);
    \foreach \a in {0,1,2}{
    \draw [-] (4-\a,-1)--(4-\a,2);
    }
    \foreach \i in {1,...,7}{
    \node () at (\i-0.5,-0.5) {$(1,\i)$};
    } 
    
    \foreach \i in {2,...,6}{
    \node () at (\i-0.5,0.5) {$(2,\i)$};
    } 
    
    \foreach \i in {3,...,5}
    {
    \node () at (\i-0.5,1.5) {$(3,\i)$};
    } 
    \node () at (3.5,-2) {Poset Labelling};
\end{tikzpicture}

\begin{tikzpicture}
 \draw [-] (5,3)--(3,3);
    \draw [-] (6,2)--(2,2);
    \draw [-] (7,1)--(1,1);
    \draw [-] (8,0)--(0,0);
    \draw [-] (8,-1)--(0,-1);
    \draw [-] (1,-1)--(1,1);
    \draw [-] (0,-1)--(0,0);
    \draw [-] (7,-1)--(7,0);
    \foreach \a in {-2,-1,0,1,2}{
    \draw [-] (4-\a,-1)--(4-\a,2);
    }
    \draw [-] (7,0)--(7,1);
    \draw [-] (8,-1)--(8,0);
    \foreach \a in {3,4,5}{
    \draw[-] (\a,2)--(\a,3);}

 \node () at (4,-2) {$R_{2,6}$};
 \draw [color=blue,thick](0.5,-0.5)--(1.5,0.5)--(6.5,0.5);
 \draw [color=blue,thick](5.5,-0.5)--(5.5,1.5);
 \foreach \i in {0,...,5}{
 \node () at (\i+0.25,-1+0.25){$1$};
 }
  \foreach \i in {1,...,5}{
 \node () at (\i+0.25,0+0.25){$1$};
 }
 \foreach \i in {2,...,5}{
 \node () at (\i-0.25,0-0.25){$-1$};
 }
 \node () at (6-0.25,1-0.25){$1$};
 \node () at (6+0.25,1+0.25){$-1$};
 \node () at (6-0.25,2-0.25){$1$};
 \node () at (7-0.25,1-0.25){$1$};

\end{tikzpicture}
\begin{tikzpicture}
 \draw [-] (5,3)--(3,3);
    \draw [-] (6,2)--(2,2);
    \draw [-] (7,1)--(1,1);
    \draw [-] (8,0)--(0,0);
    \draw [-] (8,-1)--(0,-1);
    \draw [-] (1,-1)--(1,1);
    \draw [-] (0,-1)--(0,0);
    \draw [-] (7,-1)--(7,0);
    \foreach \a in {-2,-1,0,1,2}{
    \draw [-] (4-\a,-1)--(4-\a,2);
    }
    \draw [-] (7,0)--(7,1);
    \draw [-] (8,-1)--(8,0);
    \foreach \a in {3,4,5}{
    \draw[-] (\a,2)--(\a,3);}

 \node () at (4,-2) {$R_{2,3}$};
 \draw [color=blue,thick](0.5,-0.5)--(1.5,0.5)--(6.5,0.5);
 \draw [color=blue,thick](2.5,-0.5)--(2.5,1.5)--(3.5,2.5);
\foreach \i in {0,1,2}{
 \node () at (\i+0.25,-1+0.25){$1$};
 }
 \foreach \i in {1,2}{
 \node () at (\i+0.25,0.25){$1$};
 }
 \foreach \i in {3,...,7}{
 \node () at (\i-0.25,1-0.25) {$1$};}
 
 \foreach \i in {3,...,6}{
 \node () at (\i-0.25,2-0.25) {$1$};}
  \foreach \i in {4,5}{
 \node () at (\i-0.25,3-0.25) {$1$};}

 \foreach \i in {3,...,6}{
 \node () at (\i+0.25,1+0.25) {$-1$};}
 
  \foreach \i in {4,5}{
 \node () at (\i+0.25,2+0.25) {$-1$};}
 
 \node () at (2-0.25,0-0.25) {$-1$};
\end{tikzpicture}

    \caption{The top pitcure depicts our labeling of $\mathscr{T}(3,5)$. The bottom picture depicts the rook statistics. In the bottom pictures, the number in the upper-right corner of a box $p$ is $c_p^{-}$ and the number in the lower-left corner is $c_p^+$.  The number in the $\llcorner$-nook between two maximal elements is $c_A^{-}$ for the antichain consisting of the two maximal elements. Where there are no numbers, the corresponding constant in the rook equation is $0$.  The blue lines mark the squares which the rook attacks.}
    \label{fig:rooks}
\end{figure}

\begin{remark}
Hopkins actually considered a slightly different rook equation which he called $R_{i,j}^{\text{shift}}$. $R_{i,j}^{\text{shift}}$ is a linear combination of toggles of single element antichains and is equal to out $R_{i,j}$ minus the term
\[
\sum_{i=1}^{a-1} c_{\{(i',\lambda_{i'}),(i'+1,\lambda_{i'+1})\}}^- T_{\{(i',\lambda_{i'}),(i'+1,\lambda_{i'+1})\}}^-(I).
\]
He called the above term $C_{i,j}^{\text{shift}}(\lambda,I)$ where $\lambda$ is the shape of the shifted Young diagram.  See \cite{hopkins2017cde} for more details.
\end{remark}
Our rooks satisfy the nice property that:

\begin{prop}\cite[Lemma 4.4]{hopkins2017cde}
For any order ideal $I \in J(\mathscr{T}(a,b))$, we have $R_{i,j}(I)=1$.
\end{prop}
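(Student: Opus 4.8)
The plan is to prove $R_{i,j}(I)=1$ by a discrete summation-by-parts (telescoping) argument whose conceptual content is that $R_{i,j}(I)$ is a \emph{signed count of the corners of the boundary of $I$}, weighted so that the count collapses to a single unit. First I would encode $I$ by its indicator $x_p\in\{0,1\}$, with $x_p=1$ exactly when $p\in I$, and rewrite each toggleability statistic locally: $\mc T_p^-(I)=x_p\prod_{p\lessdot q}(1-x_q)$ detects that $p$ is a maximal element of $I$ (a convex corner of the boundary), $\mc T_p^+(I)=(1-x_p)\prod_{r\lessdot p}x_r$ detects that $p$ is a minimal element of the complement (a concave corner), and $\mc T_A^-(I)$ for a maximal-element antichain $A=\{(i',\,\cdot\,),(i'+1,\,\cdot\,)\}$ is the product of the two corresponding removability indicators. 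Since $I$ is an order ideal, $x$ is order-reversing, and this monotonicity is exactly what makes the signed sums collapse.

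Next I would group the boxes according to the four regions on which $c_p^-$ and $c_p^+$ are constant, as in \Cref{fig:rooks}: $c_p^-=+1$ on the closed northeast quadrant of $(i,j)$ and $-1$ on the open southwest interior quadrant, while $c_p^+$ is supported with opposite signs on the complementary quadrants. Each regional sum is then a signed count of convex or concave corners of the boundary of $I$ lying in that region. Because convex and concave corners strictly alternate as one traverses the monotone boundary staircase $\beta$ of $I$ from the upper-left to the lower-right of the trapezoidal region, interior corners cancel in pairs under the quadrant sign pattern, and the signs are arranged precisely so that exactly one net unit survives. This would give $R_{i,j}(I)=1$ for the purely rectangular part of the argument, independently of $I$ and matching the examples one checks directly on small posets.

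The main obstacle is the diagonal fold of the trapezoid, where the column segment of the rook's hook (the blue staircase of \Cref{fig:rooks} running up column $j$ and along row $i$) meets its row segment, and where the boundary path can exit the poset through a pair of adjacent maximal elements rather than across a single covering edge. At such a configuration a box on the diagonal $j'=i'$ has only one relevant cover, so the naive telescoping loses (or double-counts) a unit, and no single-box toggle statistic can repair this. I would check that the antichain correction $\sum_{i'} c^-_{A_{i'}}\,\mc T^-_{A_{i'}}(I)$ is tuned exactly to reinstate the missing crossing with the correct sign, so that the net count through the fold still changes by precisely one; this is the term by which our $R_{i,j}$ differs from Hopkins's $R_{i,j}^{\mathrm{shift}}$ in the preceding remark. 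Verifying that these corrections neither over- nor under-count — so that the final total is $1$ rather than $0$ or $2$ — is the delicate step, after which $R_{i,j}(I)=1$ follows uniformly in $I\in J(\mathscr T(a,b))$ and in the rook position $(i,j)$.
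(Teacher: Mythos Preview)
The paper does not give its own proof of this proposition: it simply cites \cite[Lemma~4.4]{hopkins2017cde} and moves on. So there is no ``paper's proof'' to compare against here.

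Your plan is in fact the approach used in the cited literature. The rectangular version is exactly the corner-counting/telescoping argument of Chan--Haddadan--Hopkins--Moci \cite{chan2017expected}, and Hopkins's shifted version in \cite{hopkins2017cde} proceeds by the same boundary-crossing analysis together with the diagonal correction that you isolate. Your identification of the antichain term $\sum_{i'} c^-_{A_{i'}}\,\mc T^-_{A_{i'}}$ as precisely the repair for the folded-over column at the main diagonal is correct, and it is indeed the point of the remark following the proposition comparing $R_{i,j}$ with Hopkins's $R_{i,j}^{\mathrm{shift}}$.

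That said, what you have written is a plan, not a proof: the phrases ``I would check'' and ``is the delicate step'' flag exactly the place where the work lies. If you want a self-contained argument rather than a citation, you must actually execute the case analysis at the diagonal --- in particular, track what happens when the boundary of $I$ meets the main diagonal between rows $i'$ and $i'+1$ with both $(i',\lambda_{i'})$ and $(i'+1,\lambda_{i'+1})$ in $\max(I)$, and verify that the $-1$ contributed by $c^-_{\{(i',\lambda_{i'}),(i'+1,\lambda_{i'+1})\}}$ cancels the extra $+1$ coming from the two separate $\mc T^-$ terms. Until that bookkeeping is written out, the proposal is a correct outline of the right method but not yet a proof.
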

%For a general trapezoid and a toggle-symmetric distribution $\mu$, it is not necessarily true that we $\mathbb{E}[\mu; \ddeg] = \mathbb{E}[uni_{J(\mathscr{T}(a,b))}; \ddeg]$ (see \cite{hopkins2017cde}). However, we can use the rook approach to to find the difference between $\mathbb{E}[\mu; \ddeg]$ and $\mathbb{E}[uni_{J(\mathscr{T}(a,b))}; \ddeg] = ab/(a+b)$:

Using the rook approach, we are able to give a formula for the expected down-degree.

\begin{lemma}
\label{lem:rookerrorterm}
    For any toggle-symmetric distribution $\mu$ on $J(\mathscr{T}(a,b))$,
    \[
    \mathbb{E}[\mu; \ddeg] = \frac{ab}{a+b} + \frac{a-b}{a+b}\left(\sum_{i=1}^a (a-i) \mathbb{E}[\mu;\mc T_{(i,i)}^+] - \sum_{i=1}^{a-1} i\mathbb{E}[\mu;\mc T_{(i,\lambda_{i}),(i+1,\lambda_{i+1})}^-]\right)
    \]
\end{lemma}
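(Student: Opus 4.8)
The plan is to translate the whole statement into the language of toggleability statistics and then drive everything from the single conserved quantity we are handed, the rook identity $R_{i,j}\equiv 1$.

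First I would record the elementary reduction $\ddeg(I)=\sum_{p\in\mathscr{T}(a,b)}\mc T_p^-(I)$: for a single box $p$ we have $\mc T_p^-(I)=1$ exactly when $p$ is a maximal element of $I$, and $\#\max(I)=\ddeg(I)$. Hence $\mathbb{E}[\mu;\ddeg]=\sum_p\mathbb{E}[\mu;\mc T_p^-]$. The only property of $\mu$ I will use is toggle-symmetry in its weakest form, $\mathbb{E}[\mu;\mc T_p]=0$, i.e.\ $\mathbb{E}[\mu;\mc T_p^+]=\mathbb{E}[\mu;\mc T_p^-]$ for every single box $p$. Writing $e_p:=\mathbb{E}[\mu;\mc T_p^-]=\mathbb{E}[\mu;\mc T_p^+]$, the goal becomes to pin down $\sum_p e_p$. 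I also fix the shorthand $A_i=\{(i,\lambda_i),(i+1,\lambda_{i+1})\}$ for the two-element maximal antichains appearing in the rook.

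The engine is the cited identity $R_{i,j}(I)=1$. Since each $R_{i,j}$ is by definition a fixed linear combination of the box statistics $\mc T_p^{\pm}$ and the antichain statistics $\mc T_{A_{i'}}^-$, applying $\mathbb{E}[\mu;\cdot]$ and merging $\mc T_p^+$ with $\mc T_p^-$ via toggle-symmetry turns each rook identity into one linear relation among the $e_p$ and the $\mathbb{E}[\mu;\mc T_{A_{i'}}^-]$. I would then form a weighted combination $\sum_{(i,j)}w_{i,j}R_{i,j}$; because every $R_{i,j}\equiv 1$ its expectation is the constant $\sum_{(i,j)}w_{i,j}$, while a short computation with the sign rules defining $c_p^{\pm}$ shows that the coefficient attached to $e_p$ for an \emph{off-diagonal} box $p=(i',j')$ equals (weighted $w$-sum along row $i'$) $+$ (weighted $w$-sum along column $j'$). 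The diagonal boxes $(k,k)$ pick up, in addition, the ``diagonal-bending'' contributions (every rook whose hook folds onto the diagonal at $(k,k)$), and each antichain $A_{i'}$ collects $\sum_{(i,j)}w_{i,j}\,c^-_{A_{i'}}$. The whole proof then reduces to a transportation-type choice of weights: pick the $w_{i,j}$ on the trapezoidal staircase so that all weighted row-sums equal some $\rho$, all weighted column-sums (beyond the trivial first column) equal $a+b-\rho$, and $\sum_{(i,j)}w_{i,j}=ab$. With such weights every off-diagonal $e_p$ is multiplied by the \emph{constant} $a+b$, so those contributions collapse into $(a+b)\sum_p e_p=(a+b)\,\mathbb{E}[\mu;\ddeg]$, exactly as in the rectangle (where all hook data is constant and nothing else survives, recovering $ab/(a+b)$).

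Once the bulk has been normalized to $a+b$, the formula falls out by reading off what is left over on the diagonal and on the maximal antichains. This is where I expect the real work: for the chosen weights one must evaluate the exact excess coefficient on each diagonal box $(k,k)$ and the exact coefficient on each $A_i$, and verify that they equal $(b-a)(a-k)$ and $(a-b)i$ respectively. Rearranging to solve for $\mathbb{E}[\mu;\ddeg]$, and re-expressing $\mathbb{E}[\mu;\mc T^-_{(k,k)}]=\mathbb{E}[\mu;\mc T^+_{(k,k)}]$ by toggle-symmetry, then produces the stated identity. The \textbf{main obstacle} is precisely this boundary bookkeeping: confirming that compatible weights with constant row/column sums and total $ab$ exist on the staircase, and then counting, position by position near the diagonal and the top row, how the rook's attack pattern folds onto the main diagonal and onto the adjacent pairs of maximal elements. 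The off-diagonal cancellation to $a+b$ is robust because it is the same mechanism that works for the rectangle; it is the surviving terms $(b-a)(a-k)$ and $(a-b)i$ — nonzero precisely because the trapezoid's hook lengths are not constant — whose exact evaluation is delicate and constitutes the crux of the lemma.
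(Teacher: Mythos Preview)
Your approach is essentially the paper's: form a weighted sum $f=\sum w_{i,j}R_{i,j}$ of rooks, use $R_{i,j}\equiv 1$ to get $f\equiv \sum w_{i,j}$, then expand $f$ in toggleability statistics and invoke toggle-symmetry to kill the $\sum c_p\mc T_p$ terms, leaving $(a+b)\,\ddeg$ plus the diagonal and maximal-antichain corrections. The paper simply writes down the weights explicitly,
\[
f=(2a-a^2)R_{(1,1)}+(b-a)\sum_{j=2}^{a}R_{(1,j)}+b\sum_{i=2}^{a}R_{(i,i)}+a\sum_{j=a+1}^{a+b-1}R_{(1,j)},
\]
checks $\sum w_{i,j}=ab$, and asserts directly that this arrangement attacks every box $a+b$ times except the diagonal boxes $(i,i)$, which are attacked $(a+b)-(a-b)(a-i)$ times. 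One caution: your heuristic that the coefficient on an off-diagonal $p=(i',j')$ is ``row-sum plus column-sum'' is the rectangle picture and is not literally correct here, since the trapezoid rook's hook bends along the main diagonal and can attack off-diagonal boxes via that bend; the paper sidesteps this by placing rooks only on row~$1$ and the diagonal, which makes the attack count easy to verify by hand rather than via a general row/column transportation argument.
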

\begin{proof}
Consider the linear combination of statistics $f:J(\mathscr{T}(a,b)) \rightarrow \mathbb{R}$
\[
    f := (2a-a^2)R_{(1,1)} + (b-a)\sum_{j=2}^{a}R_{(1,j)} + b\sum_{i=2}^{a}R_{(i,i)} + a\sum_{j=a+1}^{a+b-1}R_{(1,j)}.
\]
We will compute $\mathbb{E}[\mu;f]$ in two ways. On one hand, for any ideal $I$,
\[
    f(I) = (2a-a^2)R_{(1,1)}(I) + (b-a)\sum_{j=2}^{a}R_{(1,j)}(I) + b\sum_{i=2}^{a}R_{(i,i)}(I) + a\sum_{j=a+1}^{a+b-1}R_{(1,j)}(I) = ab
\]
thus $\mathbb{E}[\mu;f] = ab$.
On the other hand, we see that the rook arrangement which $f$ defines attacks every element of $\mathscr{T}(a,b)$ $a+b$ times except for elements of form $(i,i)$, which are attacked $(a+b)-(a-b)\cdot (a-i)$ times. Thus for some constants $c_p$, 
\[
    f = (a+b) \cdot \ddeg - \sum_{i=1}^a (a-b)(a-i)\mc T_{(i,i)}^+ + \sum_{i=1}^{a-1} (a-b)i\mc T_{(i,\lambda_{i}),(i+1,\lambda_{i+1})} + \sum_p c_p \mc T_p
\]
Over a toggle-symmetric distribution $\mu$,    $\mathbb{E}[\mu;\sum_p c_p \mc T_p] = 0$.
Using linearity of expectation:
\[
\mathbb{E}\left[\mu;\sum_p c_p \mc T_p\right] = (a+b) \cdot \mathbb{E}[\mu;\ddeg] - \sum_{i=1}^a (a-b)(a-i)\mathbb{E}\left[\mu;\mc T_{(i,i)}^+\right] + \sum_{i=1}^{a-1} (a-b)i\mathbb{E}\left[\mu;\mc T_{(i,\lambda_{i}),(i+1,\lambda_{i+1})}^-\right].
\] 
Comparing our two equations for $\mathbb{E}[\mu,f]$ yields the desired result.
\end{proof}

Our goal is now to evaluate the ``error term'', that is, the term \[\frac{a-b}{a+b}\left(\sum_{i=1}^a (a-i) \mathbb{E}[\mu;\mc T_{(i,i)}^+] - \sum_{i=1}^{a-1} i\mathbb{E}[\mu;\mc T_{(i,\lambda_{i}),(i+1,\lambda_{i+1})}^-]\right)\]
appearing in Lemma~\ref{lem:rookerrorterm}. We will show that the error term is $0$ for toggle
on antichains-symmetric distributions $\mu$ on $\mathscr{T}(a,b)$ for $a\leq 3$.

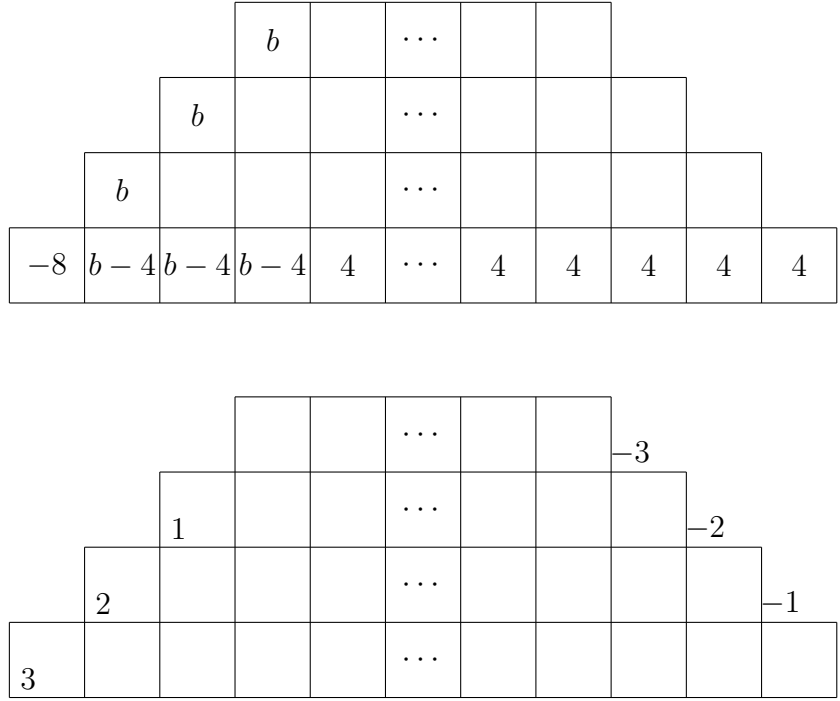
\begin{figure}
    \centering
   \begin{tikzpicture}
 \draw [-] (8,3)--(3,3);
    \draw [-] (9,2)--(2,2);
    \draw [-] (10,1)--(1,1);
    \draw [-] (11,0)--(0,0);
    \draw [-] (11,-1)--(0,-1);
    \foreach \i in {0,...,11}{
    \draw (\i,0)--(\i,-1);
    }
    \foreach \i in {1,...,10}{
    \draw (\i,0)--(\i,1);
    }
    \foreach \i in {2,...,9}{
    \draw (\i,2)--(\i,1);
    }
    \foreach \a in {3,...,8}{
    \draw[-] (\a,2)--(\a,3);}
    \node at (0.5,-0.5) {$-8$};
    \foreach \a in {1,2,3}{
     \node at (0.5+\a,-0.5+\a) {$b$};}
     
      \foreach \a in {1,2,3}{
     \node at (0.5+\a,-0.5) {$b-4$};}
     \node at (4.5,-0.5) {$4$};
     \foreach \a in {0,1,2,3}{
    \node at (5.5,-0.5+\a) {$\cdots$}; }
    \foreach \a in {2,...,6}{
    \node at (4.5+\a,-0.5) {$4$};}
\end{tikzpicture}

\vspace{1.2cm}

\begin{tikzpicture}
 \draw [-] (8,3)--(3,3);
    \draw [-] (9,2)--(2,2);
    \draw [-] (10,1)--(1,1);
    \draw [-] (11,0)--(0,0);
    \draw [-] (11,-1)--(0,-1);
    \foreach \i in {0,...,11}{
    \draw (\i,0)--(\i,-1);}
    \foreach \i in {1,...,10}{
    \draw (\i,0)--(\i,1);}
    \foreach \i in {2,...,9}{
    \draw (\i,2)--(\i,1);}
    \foreach \a in {3,...,8}{
    \draw[-] (\a,2)--(\a,3);}
 \foreach \a [evaluate=\a as \suivant using int(4-\a)] in {1,2,3}{
     \node at (3.25-\a,2.25-\a) {$\a$};}
     \foreach \a in {0,1,2,3}{
    \node at (5.5,-0.5+\a) {$\cdots$}; }
    \foreach \a [evaluate=\a as \suivant using int(4-\a)] in {1,2,3}{
     \node at (11.25-\a,-0.75+\a) {$-\a$};}
\end{tikzpicture}

    \caption{Rook arrangement defined by $f$ for $\mathscr T(4,b)$ and error term scaled down by $(a+b)/(a-b)$.}
    \label{fig:my_label}
\end{figure}

We will show the error term is $0$ for toggle on anitchains-symmetric distribution $\mu$ on $\mathscr T(a,b)$ with $a \leq 3$:

\begin{lemma}
\label{lem:errorterm2is0}
For any toggle-symmetric distribution $\mu$ on $J(\mathscr{T}(a,b))$
\[
\sum_{i=1}^{a} (a-i) \mathbb{E}[\mu;\mc T_{(i,i)}^-] - \sum_{(i,j) | j< i \leq a} \mathbb{E}[\mu;\mc T_{(i,j)}^-] = 0.
\]
\end{lemma}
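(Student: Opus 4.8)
The plan is to prove a \emph{pointwise} identity among the toggleability statistics and then integrate it against $\mu$, using the one defining feature of a toggle-symmetric distribution: $\mathbb{E}[\mu;\mc T_p]=0$, equivalently $\mathbb{E}[\mu;\mc T_p^+]=\mathbb{E}[\mu;\mc T_p^-]$, for every single box $p$. Throughout I write $x_p(I)=1$ if $p\in I$ and $x_p(I)=0$ otherwise, so that each $\mc T_p^{\pm}$ is a concrete polynomial in the $x_q$: namely $\mc T_p^+=(1-x_p)\prod_{q\lessdot p}x_q$ and $\mc T_p^-=x_p\prod_{r\gtrdot p}(1-x_r)$. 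Two structural facts drive everything. First, each diagonal box $(i,i)$ has the unique cover $(i,i+1)$, since its other potential cover $(i+1,i)$ has second coordinate smaller than its first and so lies outside $\mathscr T(a,b)$; hence $\mc T^-_{(i,i)}=x_{(i,i)}-x_{(i,i+1)}$ (using $x_{(i,i+1)}\le x_{(i,i)}$). Second, I read the second sum as running over the strict-upper corner triangle $\{(i,j):1\le i<j\le a\}$ (the reflection $j<i$ of the written index), every box of which lies in the trapezoid together with its two covered neighbours $(i-1,j)$ and $(i,j-1)$; this gives $\mc T^+_{(i,j)}=(x_{(i-1,j)}-x_{(i,j)})\,x_{(i,j-1)}$, with the convention $x_{(0,j)}=1$ for the top row $i=1$.

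First I would apply toggle-symmetry to recast the target as $\sum_{1\le i<j\le a}\mathbb{E}[\mu;\mc T^+_{(i,j)}]=\sum_{i=1}^a(a-i)\,\mathbb{E}[\mu;\mc T^-_{(i,i)}]$. The heart of the argument is then the purely combinatorial claim that for every order ideal $I$,
\[
\sum_{1\le i<j\le a}\mc T^+_{(i,j)}(I)=\sum_{i=1}^a (a-i)\,\mc T^-_{(i,i)}(I)\;-\;\sum_{1\le i<j\le a-1}\mc T_{(i,j)}(I).
\]
Granting this, taking $\mathbb{E}[\mu;-]$ annihilates the last sum (each $\mc T_{(i,j)}$ has zero expectation) and converts the left side back to $\mc T^-$, so the lemma follows. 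I expect to establish the displayed identity by induction on $a$, peeling off the rightmost column $j=a$ of the corner. Subtracting the identity for $a-1$ from the one for $a$ cancels the inner triangle and reduces the inductive step to the single-column identity
\[
\sum_{i=1}^{a-1}\mc T^+_{(i,a)}(I)=\sum_{i=1}^{a-1}\mc T^-_{(i,i)}(I)\;-\;\sum_{i=1}^{a-2}\mc T_{(i,a-1)}(I),
\]
which I would verify by telescoping the forward-toggle statistics $\mc T^+_{(i,a)}=(x_{(i-1,a)}-x_{(i,a)})x_{(i,a-1)}$ down the column $j=a$ and collecting the resulting boundary contributions.

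The main obstacle is exactly this two-dimensional bookkeeping. Because each near-diagonal box has two covered neighbours, $\mc T^+_{(i,j)}$ carries the cross factor $x_{(i,j-1)}$ coupling adjacent columns, so the reduction is genuinely a staircase telescope rather than a one-line chain telescope. The delicate point in the single-column step is controlling the terms where a covered neighbour exits the corner triangle and where the chain in column $a$ meets the diagonal: these must reorganize precisely into the signed single-box toggles $\mc T_{(i,a-1)}$, and checking this cancellation box-by-box (together with the base cases $a=1,2$) is where the real work lies. Once the pointwise identity is in hand the passage to expectations is immediate and uses only toggle-symmetry, so that—as the surrounding discussion suggests—this lemma holds for all $a$, the extra $a\le 3$ hypothesis entering only in the separate step that matches this single-box sum against the antichain-toggle term of the error expression.
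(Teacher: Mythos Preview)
Your plan has the right shape—find a pointwise identity among toggleability statistics whose ``junk'' is a combination of single-box signed toggles $\mc T_p$, then kill that junk in expectation—but the specific pointwise identity you write down is \emph{false} for $a\ge 4$. Take $a=4$ (any $b\ge 4$) and the ideal $I=\{(1,1)\}$. Then the only nonzero $\mc T^+_{(i,j)}$ with $i<j\le 4$ is $\mc T^+_{(1,2)}=1$, so your left side is $1$; on the right, $\mc T^-_{(1,1)}=1$ is the only nonzero diagonal term, contributing $(4-1)\cdot 1=3$, while the only nonzero correction term is $\mc T_{(1,2)}=1$, giving $3-1=2\ne 1$. (For $a\le 3$ your identity happens to hold, which is why your spot checks didn't catch it.) Consequently your single-column identity fails at $k=4$ for the same $I$: both sides of $A_k+A_{k-1}-B_{k-1}=D_k$ come out $0$ and $1$ respectively.

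What \emph{is} true pointwise is
\[
\sum_{1\le i<j\le a}\mc T^+_{(i,j)}(I)\;=\;\sum_{i=1}^{a}(a-i)\,\mc T^-_{(i,i)}(I)\;-\;\sum_{1\le i<j\le a-1}(a-j)\,\mc T_{(i,j)}(I),
\]
with weight $a-j$ (not $1$) on each correction term. This follows from, and is essentially equivalent to, the single clean pointwise fact the paper uses: for every $n$ and every ideal $I$,
\[
\sum_{i=1}^{n}\mc T^-_{(i,n)}(I)\;=\;\sum_{i=1}^{n}\mc T^+_{(i,n+1)}(I),
\]
i.e.\ ``column $n$ contains a maximal element of $I$'' and ``column $n+1$ contains a minimal element of $I^c$ strictly above the diagonal'' are the same event. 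The paper exploits this in expectation as the recursion $a_{n+1}=a_n+b_n$, then sums. If you want to keep your pointwise approach, replace your displayed identity by the weighted one above (or, equivalently, prove the column-to-column equality just stated and sum); either way the missing ingredient is that the correction must spread over all earlier columns, not just column $a-1$.
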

\begin{proof}
For any event $Y$ and a probability distribution $\mu$ on a finite set $X$, we use $\Pr[\mu;Y]$ to denote the probablity $Y$ occurs with respect to $\mu$.
Define
\begin{align*}
    a_n &:= \sum_{i = 1}^{n-1} \mathbb{E}[\mu;\mc T_{(i,n)}^-] \\
    &= \Pr[\mu; I \textnormal{ has a maximal element in } \{(i,n)| i<n\}] \\
    b_n &:=\mathbb{E}[\mu;\mc T_{(n,n)}^-]\\
    &= \Pr[\mu; (n,n) \textnormal{ is a maximal element in } I]
\end{align*}
Importantly, notice that 
\begin{align*}
    a_n + b_n &= \Pr[\mu; I \textnormal{ has a maximal element in } \{(i,n)\}] \\
    &= \Pr[\mu; I^C \textnormal{ has a minimal element in } \{(i,n+1)| i<n+1\}]\\
    &= \Pr[\mu; I \textnormal{ has a maximal element in } \{(i,n+1)| i<n+1\}] \tag{\textnormal{by toggle-symmetry}}\\
    &= a_{n+1}
\end{align*}
where $I^C$ is the complement of the ideal $I$. The second to last equality follows from the fact that if an ideal $I$ is cut out with a $\urcorner$-shaped nook identifying a maximal element of $I$ in the $j$-th collumn, then it must be followed by a $\llcorner$-shaped nook identifying a minimal element of the complement of $I$ in the $j+1$-th collumn and vice versa. Noting that $a_1=0$, we now compute:
\begin{align*}
    \sum_{i=1}^{a} (a-i) \mathbb{E}[\mu;\mc T_{(i,i)}^-] - \sum_{(i,j) | j< i \leq a} \mathbb{E}[\mu;\mc T_{(i,j)}^-]
    &= \sum_{i=1}^{a} (a-i) b_i - \sum_{i=1}^{a} a_i \\
    &= \left( \sum_{i=1}^{a} (a-i) (a_{i+1}-a_i)\right) - \sum_{i=1}^{a} a_i \\ 
    &= \sum_{i=1}^{a} a_i - \sum_{i=1}^{a} a_i = 0.
\end{align*}
\end{proof}

In particular, the above lemma says that a toggle-symmetric distribution $\mu$ satisfies $\mathbb{E}[\mu;\ddeg] = ab/(a+b)$ if and only if our error term form Lemma~\ref{lem:rookerrorterm} is equal to
\[
\sum_{i=1}^{a} (a-i) \mathbb{E}[\mu;\mc T_{(i,i)}^-] - \sum_{(i,j) | j< i \leq a} \mathbb{E}[\mu;\mc T_{(i,j)}^-] = 0.
\]
Thus we have 
\begin{prop}
\label{prop:errorterm2suffices}
A toggle-symmetric distribution $\mu$ on $J(\mathscr{T}(a,b))$ satisfies $\mathbb{E}[\mu;\ddeg] = ab/(a+b)$ if and only if
\[
\sum_{(i,j) | j< i \leq a} \mathbb{E}[\mu;\mc T_{(i,j)}^-] = \sum_{i=1}^{a-1} i\mathbb{E}[\mu;\mc T^-_{(i,\lambda_i),(i+1,\lambda_{i+1})}].
\]
\end{prop}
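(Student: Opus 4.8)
The plan is to derive the proposition purely by chaining together the two preceding lemmas with the single-element toggle-symmetry of $\mu$; essentially no new combinatorics is required, since all the genuine work has been front-loaded. Write $B$ for the bracketed ``error term'' appearing in Lemma~\ref{lem:rookerrorterm}, namely
\[
B := \sum_{i=1}^a (a-i)\,\mathbb{E}[\mu;\mc T_{(i,i)}^+] - \sum_{i=1}^{a-1} i\,\mathbb{E}[\mu;\mc T_{(i,\lambda_{i}),(i+1,\lambda_{i+1})}^-],
\]
so that Lemma~\ref{lem:rookerrorterm} reads $\mathbb{E}[\mu;\ddeg] = \tfrac{ab}{a+b} + \tfrac{a-b}{a+b}\,B$. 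We may assume $a<b$ (when $a=b$ the prefactor $(a-b)/(a+b)$ vanishes, so $\mathbb{E}[\mu;\ddeg]=ab/(a+b)$ holds automatically and the degenerate trapezoid $\mathscr T(a,a)$ is in any case already covered). For $a<b$ the factor $(a-b)/(a+b)$ is nonzero, so the condition $\mathbb{E}[\mu;\ddeg]=ab/(a+b)$ is \emph{equivalent} to $B=0$. It therefore suffices to show that $B=0$ is equivalent to the asserted identity.

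Next I would convert the diagonal $\mc T^+$ terms into $\mc T^-$ terms. Since $\mu$ is toggle-symmetric, $\mathbb{E}[\mu;\mc T_p]=0$ for every single-element antichain $p$; applying this with $p=(i,i)$ and using $\mc T_p=\mc T_p^+-\mc T_p^-$ gives $\mathbb{E}[\mu;\mc T_{(i,i)}^+]=\mathbb{E}[\mu;\mc T_{(i,i)}^-]$ for each $i$. Consequently the first sum in $B$ equals $\sum_{i=1}^a (a-i)\,\mathbb{E}[\mu;\mc T_{(i,i)}^-]$, which by Lemma~\ref{lem:errorterm2is0} is exactly $\sum_{(i,j)\mid j<i\le a}\mathbb{E}[\mu;\mc T_{(i,j)}^-]$.

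Substituting this identity into the definition of $B$ yields
\[
B = \sum_{(i,j)\mid j<i\le a}\mathbb{E}[\mu;\mc T_{(i,j)}^-] - \sum_{i=1}^{a-1} i\,\mathbb{E}[\mu;\mc T_{(i,\lambda_{i}),(i+1,\lambda_{i+1})}^-],
\]
so $B=0$ holds precisely when the two displayed sums agree, which is the stated equality. Combining this with the first-step equivalence ($\mathbb{E}[\mu;\ddeg]=ab/(a+b) \iff B=0$) completes the proof. There is no real obstacle at this stage: every step is an exact rewriting, and the substantive content lives entirely in Lemma~\ref{lem:rookerrorterm} (the rook bookkeeping) and Lemma~\ref{lem:errorterm2is0} (the telescoping argument through the quantities $a_n,b_n$ and the $\urcorner$/$\llcorner$-nook correspondence). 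The only point that demands a little care is the reduction to $a<b$, ensuring that dividing out the prefactor $(a-b)/(a+b)$ is legitimate; it is precisely the single-element toggle-symmetry hypothesis—rather than the stronger toggle-on-antichains symmetry used later—that is needed to pass from $\mc T_{(i,i)}^+$ to $\mc T_{(i,i)}^-$.
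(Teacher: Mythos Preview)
Your proposal is correct and follows essentially the same approach as the paper: combine Lemma~\ref{lem:rookerrorterm} with Lemma~\ref{lem:errorterm2is0}, using single-element toggle-symmetry to convert $\mathbb{E}[\mu;\mc T_{(i,i)}^+]$ into $\mathbb{E}[\mu;\mc T_{(i,i)}^-]$. Your write-up is in fact more detailed than the paper's one-sentence justification, including your explicit attention to the $a=b$ case (which the paper does not address at all).
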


\begin{lemma}
\label{lem:2elmadj}
For a toggle on antichains-symmetric distribution $\mu$ on
$J(\mathscr T(a,b))$,
    \begin{align*}
    \sum_{i=1}^{a-1} \mathbb{E}[\mu; \mc T^-_{(i,\lambda_i),(i+1,\lambda_{i+1})}]
    = \sum_{i=1}^{a-1} \sum_{j>i} \mathbb{E}[\mu;\mc T^-_{(i,j+1),(j,j)}] = \sum_{i=1}^{a-1} \mathbb{E}[\mu; \mc T^-_{(i,i+1)}].
    \end{align*}
\end{lemma}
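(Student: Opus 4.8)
The plan is to read all three sums as telescoping collapses of the single double sum
$M:=\sum_{i<j}\mathbb E[\mu;\mc T^-_{\{(i,j+1),(j,j)\}}]$ that sits in the middle. I encode an ideal $I$ by its column profile $m_1\ge m_2\ge\cdots$, where $m_r=m_r(I)$ is the largest column with $(r,m_r)\in I$ (and $m_r=r-1$ if row $r$ is empty). Unwinding the definition of the toggleability statistics through the cover relations, each one becomes the indicator of an explicit local profile configuration: $\mc T^-_A(I)=1$ exactly when every box of $A$ is a maximal element of $I$, and $\mc T^+_A(I)=1$ exactly when every box of $A$ is a minimal element of the complement $I^{C}$. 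The only analytic input is toggle on antichains-symmetry, which gives $\mathbb E[\mu;\mc T^+_A]=\mathbb E[\mu;\mc T^-_A]$ for \emph{every} antichain $A$; thus I may trade a ``maximal in $I$'' configuration for the corresponding ``minimal in $I^{C}$'' configuration at will, and the whole argument is a bookkeeping of such trades.

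For the equality $M=\sum_i\mathbb E[\mu;\mc T^-_{(i,i+1)}]$ I would collapse $M$ by fixing $j$ and summing over $i$. Because the diagonal box $(j,j)$ has the single cover $(j,j+1)$, symmetrizing $\mc T^-_{\{(i,j+1),(j,j)\}}$ yields the clean configuration $\{\,m_{i-1}\ge j+1,\ m_i=\cdots=m_{j-1}=j,\ m_j=j-1\,\}$: a height-$j$ plateau on rows $i,\dots,j-1$, a strict step above row $i$, and a drop at row $j$ (for $i=1$ the condition on $m_{i-1}$ is vacuous). For fixed $j$ these events, indexed by the top row $i$ of the plateau, are disjoint and their union is exactly $\{m_{j-1}=j,\ m_j=j-1\}$, i.e. the event that $(j-1,j)$ is maximal. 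Hence $\sum_{i<j}\mathbb E[\mu;\mc T^-_{\{(i,j+1),(j,j)\}}]=\mathbb E[\mu;\mc T^-_{(j-1,j)}]$, and reindexing $j\mapsto i+1$ gives the equality.

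For the equality with the top pairs I would instead collapse $M$ by fixing $i$ and summing over $j$, proving $\sum_{j>i}\mathbb E[\mu;\mc T^-_{\{(i,j+1),(j,j)\}}]=\mathbb E[\mu;\mc T^-_{\{(i,\lambda_i),(i+1,\lambda_{i+1})\}}]$ for each $i$ and then summing over $i$. Here I introduce the interpolating probabilities $P_c:=\Pr[\mu;\,m_i=c,\ m_{i+1}=c-1]$, a ``descent by one into row $i+1$ at height $c$''; at the top $c=\lambda_i$ (recall $\lambda_{i+1}=\lambda_i-1$) this is exactly the full-rows event $\mathbb E[\mu;\mc T^-_{\{(i,\lambda_i),(i+1,\lambda_{i+1})\}}]$. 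The idea is to slide this descending pair downward one height at a time using the kitty-corner antichains $\{(i,c),(i+1,c-1)\}$, splitting off at height $c$ precisely those configurations in which the height-$(c-1)$ plateau beneath row $i+1$ runs all the way to the diagonal; these are exactly $\mc T^-_{\{(i,c),(c-1,c-1)\}}$, the $j=c-1$ term. This should give the recursion $P_c=\mathbb E[\mu;\mc T^-_{\{(i,c),(c-1,c-1)\}}]+P_{c-1}$, whose telescope from $c=\lambda_i$ down to the base $c=i+2$ (where the split-off term coincides with $P_{i+2}$, so the recursion degenerates) produces exactly $\sum_{j>i}\mathbb E[\mu;\mc T^-_{\{(i,j+1),(j,j)\}}]$, the terms with $j>a$ vanishing.

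The hard part is the recursion step. The kitty-corner antichain $\{(i,c),(i+1,c-1)\}$ only controls the single configuration in which the plateau breaks immediately at row $i+2$ (its $\mc T^-$-configuration is $\{m_i=c,\ m_{i+1}=c-1,\ m_{i+2}\le c-2\}$), while its symmetrized form $\{m_{i-1}\ge c,\ m_i=c-1,\ m_{i+1}=c-2\}$ carries an unwanted condition on the row $i-1$ above. So establishing $P_c-\mathbb E[\mu;\mc T^-_{\{(i,c),(c-1,c-1)\}}]=P_{c-1}$ will require a secondary induction on the length of the plateau below row $i+1$, peeling it off one row at a time, together with separate treatment of the boundary cases: the top row $i=1$, the diagonal, and the base $c=i+2$. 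Managing this nested telescoping—and verifying that the row-above conditions introduced by successive symmetrizations cancel—is where essentially all the difficulty of the lemma concentrates; the equality with $R$, by contrast, collapses in one clean step.
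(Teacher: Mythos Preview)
Your argument for the equality $M=\sum_{i}\mathbb E[\mu;\mc T^-_{(i,i+1)}]$ is correct and is exactly the paper's argument in different clothing: convert each $\mc T^-_{\{(i,j+1),(j,j)\}}$ to $\mc T^+$ by symmetry, observe that for fixed $j$ the resulting events partition $\{m_{j-1}=j,\ m_j=j-1\}=\mc T^-_{(j-1,j)}$, and reindex.

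Your plan for the other equality, however, is built on a claim that is \emph{false}. You propose to show, for each fixed $i$, that
\[
\sum_{j>i}\mathbb E\bigl[\mu;\mc T^-_{\{(i,j+1),(j,j)\}}\bigr]=\mathbb E\bigl[\mu;\mc T^-_{\{(i,\lambda_i),(i+1,\lambda_{i+1})\}}\bigr],
\]
and then sum over $i$. Take $a=3$, $b=4$, and $\mu$ the uniform distribution on $J(\mathscr T(3,4))$, which is toggle on antichains-symmetric. For $i=2$ the left side is $\Pr[m_2=4,\ m_3=3]=3/35$ while the right side is $\Pr[m_2=5,\ m_3=4]=2/35$; for $i=1$ the two sides are $2/35$ and $3/35$. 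The sums over $i$ agree, as they must, but the individual terms do not. Concretely, your hoped-for identity $P_c=P_{c-1}$ for $c>a+1$ fails at $i=2$, $c=5$: using the kitty-corner antichain $\{(2,5),(3,4)\}$ one finds $P_4-P_5=\Pr[m_1=m_2=4,\ m_3=3]>0$. This is precisely the ``unwanted condition on the row above'' that you flagged; it does not cancel within a single value of $i$, only against the corresponding defect at a different $i$.

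The paper's proof sidesteps this by working globally from the start. It introduces the full family of two-element ``step'' antichains $\{(i,j),(i',j-1)\}$ with $i<i'$, and sets
\[
f:=\sum_{A\in S^+}\mc T_A^+-\sum_{A\in S^-}\mc T_A^-,
\]
where $S^+$ and $S^-$ differ only in their boundary conditions at the diagonal and at the right edge. A one-step shift bijection between toggle-in-able antichains of $S^+$ and toggle-out-able antichains of $S^-$ shows $f\equiv 0$ pointwise on $J(\mathscr T(a,b))$. Taking expectations and using toggle on antichains-symmetry on the common part $S=S^+\cap S^-$ leaves exactly $\sum_i\mathbb E[\mu;\mc T^+_{\{(i,\lambda_i),(i+1,\lambda_{i+1})\}}]-M=0$. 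The pointwise bijection genuinely mixes different values of $i$, which is why the identity cannot be refined to a term-by-term one.
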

\begin{proof}
%Call an antichain $A$ with two elements \emph{adjacent-collumnwise} if $A=\{(i,j),(i',j') \}$ with $j = j'+1$. 
Define the subsets of %adjacent-collumnwise 
antichains
\begin{align*}
S^- :&= \{\{(i,j),(i',j') \} \subseteq \mathscr{T}(a,b) |j = j'+1, i < i' \leq j' , j<\lambda_i\},\\
S^+ :&= \{\{(i,j),(i',j') \} \subseteq \mathscr{T}(a,b) | j = j'+1, i < i' < j' , j \leq \lambda_i\}\\
S :&= S^+ \cap S^-.
\end{align*}
To get the first equality, we consider the following linear combination of statistics $f: J(\mathscr{T}(a,b)) \rightarrow \mathbb{R}$
\begin{align*}
    f :&= \sum_{A \in S^+} \mc T_A^+ - \sum_{A \in S^-} \mc T_A^- \\
        &= \sum_{A \in S} \mc T_A + \sum_{i=1}^{a-1}  \mc T^+_{(i,\lambda_i),(i+1,\lambda_{i+1})} - \sum_{i=1}^{a-1} \sum_{j< i} \mu;T^-_{(i,i),(j,i+1)}
\end{align*}
We compute $\mathbb{E}[\mu;f]$ in two ways. First notice that if you can toggle in an antichain in $S^+$, then you can toggle out an antichain in $S^-$, thus for all ideals $I$, $f(I) = 0$ and so $\mathbb{E}[\mu;f] = 0$. Next by toggle on anitchains-symmetry and linearity of expectations:
\begin{align*}
\mathbb{E}[\mu;f] &= \sum_{A \in S} \mathbb{E}[\mu;\mc T_A] + \sum_{i=1}^{a-1} \mathbb{E}[\mu; \mc T^+_{(i,\lambda_i),(i+1,\lambda_{i+1})}] - \sum_{i=1}^{a-1} \sum_{j<i} \mathbb{E}[\mu;T^-_{(i,i),(j,i+1)}] \\
&= \sum_{i=1}^{a-1} \mathbb{E}[\mu; \mc T^+_{(i,\lambda_i),(i+1,\lambda_{i+1})}] - \sum_{i=1}^{a-1} \sum_{j<i} \mathbb{E}[\mu;T^-_{(i,i),(j,i+1)}]
\end{align*}
We get the second equality from toggle on antichains-symmetry:
\begin{align*}
    \sum_{i=1}^{a-1} \mathbb{E}[\mu; \mc T^-_{(i,i+1)}] &=  \sum_{i=1}^{a-1} \sum_{j<i} \mathbb{E}[\mu;T^+_{(i,i),(j,i+1)}] \\
    &= \sum_{i=1}^{a-1} \sum_{j<i} \mathbb{E}[\mu;T^-_{(i,i),(j,i+1)}].
\end{align*}
\end{proof}

\begin{proof}[Proof of Theorem~\ref{thm:error0antichainsym}]
$ $\newline
\textbf{Case 0:} $a=1$:\\ In this case, our trapezoid poset is isomorphic to the rectangle poset $\mathscr R_{1,b}$.\\
\textbf{Case 1:} $a=2$:\\
This follows from Proposition~\ref{prop:errorterm2suffices} and Lemma~\ref{lem:2elmadj}.\\
\textbf{Case 2:} $a=3$\\
Between Proposition~\ref{prop:errorterm2suffices} and Lemma~\ref{lem:2elmadj}, all that remains, is to show
\[
\mathbb{E}[\mu; \mc T^-_{(1,3)}] = \mathbb{E}[\mu; \mc T^-_{(1,\lambda_1),(2,\lambda_2)}].
\]
Define $S$ to be the set of anitchains
\[
    S:= \left\{A \subseteq \mathscr{T}(3,n) | 
                \begin{array}{ll}
                  A=\{(3,j),(2,j+1)(1,j+2) \} \textnormal{ where } 3 \leq j \leq \lambda_3 - 1\}\\
                  \textnormal{or } A=\{(3,j),(2,j+1)(1,j+3)\}  \textnormal{ where } 3 \leq j \leq \lambda_3 - 1\}
                \end{array}
              \right\}.
\]
Then define a linear combination of statistics $f: J(\mathscr{T}(a,b)) \rightarrow \mathbb{R}$
\[
    f:= \sum_{A \in S} \mc T_A - \mc T^-_{(1,3)} + \mc T_{(2,3)(1,4)} + \mc T_{(2,2)(1,4)} + \mc T_{(2,3)(1,5)}
    + \mc T^+_{(1,\lambda_1),(2,\lambda_2)}.
\]
We compute $\mathbb{E}[\mu;f]$ in two ways. First we show that for any ideal $I$, $f(I)=0$. For any ideal $I$ where least one of 
\[\mc T^-_{(1,3)}(I),\; \mc T_{(2,3)(1,4)}(I),\; \mc T_{(2,2)(1,4)}(I),\; \mc T_{(2,3)(1,5)}(I),\; \mc T^+_{(1,\lambda_1),(2,\lambda_2)}(I)
\]
is nonzero, it can be checked by hand that exactly $1$ antichain in our sum for $f$ can be toggled out of $I$ and exactly $1$ antichain in our our sum for $f$ can be toggled into $I$, yielding $f(I) = 0$. For any other ideal, if an antichain $\{(3,x),(2,y),(1,z)\}$ of $S$ can be toggled out of $I$, then $\{(3,x+1),(2,y+1),(1,z+1)\} \in S$ can be toggled into $I$. Similarly if an antichain $\{(3,x),(2,y),(1,z)\}$ of $S$ can be toggled into $I$, then $\{(3,x-1),(2,y-1),(1,z-1)\} \in S$ can be toggled out of $I$. Thus $f(I) = 0$.
On the other hand, from toggle on antichains-symmetry,
\[
    \mathbb{E}[\mu;f] = \mathbb{E}[\mu; \mc T^-_{(1,3)}] - \mathbb{E}[\mu; \mc T^-_{(1,\lambda_1),(2,\lambda_2)}].
\]
Thus we conclude the theorem.
\end{proof}

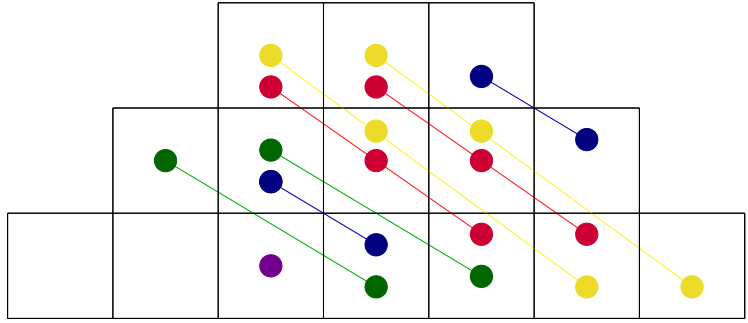
\begin{figure}[h!]
    \centering
    \begin{tikzpicture}[scale=1.4]
    \draw [-] (5,2)--(2,2);
    \draw [-] (6,1)--(1,1);
    \draw [-] (7,0)--(0,0);
    \draw [-] (7,-1)--(0,-1);
    \draw [-] (5,-1)--(5,2);
    \draw [-] (6,-1)--(6,1);
    \draw [-] (1,-1)--(1,1);
    \draw [-] (0,-1)--(0,0);
    \draw [-] (7,-1)--(7,0);
    \foreach \a in {0,1,2}{
    \draw [-] (4-\a,-1)--(4-\a,2);
    }

   \draw [color=green!70!black](1.5,0.5)--(3.5,-0.7);
   \filldraw [color=green!40!black](1.5,0.5) circle (3pt);
   \filldraw [color=green!40!black](3.5,-0.7) circle (3pt);
   
   \draw [color=green!70!black](2.5,0.6)--(4.5,-0.6);
   \filldraw [color=green!40!black](2.5,0.6) circle (3pt);
   \filldraw [color=green!40!black](4.5,-0.6) circle (3pt);
   
   \draw [color=blue!70!black](2.5,0.3)--(3.5,-0.3);
   \filldraw [color=blue!50!black](2.5,0.3) circle (3pt);
   \filldraw [color=blue!50!black](3.5,-0.3) circle (3pt);

   \filldraw [color=blue!50!black](2.5,0.3) circle (3pt);
 
   \filldraw [color=purple!60!blue] (2.5,-0.5) circle (3pt);
   
   \draw [color=blue!70!black](4.5,1.3)--(5.5,0.7);
   \filldraw [color=blue!50!black](4.5,1.3) circle (3pt);
   \filldraw [color=blue!50!black](5.5,0.7) circle (3pt);
   
   \draw [color=red!90] (2.5,1.2)--(4.5,-0.2);
   \filldraw [color=red!80!blue] (2.5,1.2) circle (3pt);
   \filldraw [color=red!80!blue] (4.5,-0.2) circle (3pt);
   \filldraw [color=red!80!blue] (3.5,0.5) circle (3pt);
   
   \draw [color=red!90] (2.5+1,1.2)--(4.5+1,-0.2);
   \filldraw [color=red!80!blue] (2.5+1,1.2) circle (3pt);
   \filldraw [color=red!80!blue] (4.5+1,-0.2) circle (3pt);
   \filldraw [color=red!80!blue] (3.5+1,0.5) circle (3pt);
   
   \draw [color=yellow!90] (2.5,1.5)--(5.5,-0.7);
   \filldraw [color=yellow!90!black] (2.5,1.5) circle (3pt);
   \filldraw [color=yellow!90!black] (3.5,0.78) circle (3pt);
   \filldraw [color=yellow!90!black] (5.5,-0.7) circle (3pt);
   
   \draw [color=yellow!90] (2.5+1,1.5)--(5.5+1,-0.7);
   \filldraw [color=yellow!90!black] (2.5+1,1.5) circle (3pt);
   \filldraw [color=yellow!90!black] (3.5+1,0.78) circle (3pt);
   \filldraw [color=yellow!90!black] (5.5+1,-0.7) circle (3pt);
   
   \draw [-] (5,2)--(2,2);
    \draw [-] (6,1)--(1,1);
    \draw [-] (7,0)--(0,0);
    \draw [-] (7,-1)--(0,-1);
    \draw [-] (5,-1)--(5,2);
    \draw [-] (6,-1)--(6,1);
    \draw [-] (1,-1)--(1,1);
    \draw [-] (0,-1)--(0,0);
    \draw [-] (7,-1)--(7,0);
    \foreach \a in {0,1,2}{
    \draw [-] (4-\a,-1)--(4-\a,2);
    }
\end{tikzpicture}
    \caption{The antichains involved in the proof of Theorem~\ref{thm:error0antichainsym} for $\mathscr T(3,b)$}
\end{figure}

\subsection{Piecewise-Linear Rowmotion}\label{subsec:PLrow}
There are a few different generalizations of rowmotion which act on $\calP$-partitions rather than order ideals. One such generalization is to view a $\calP$-partition in a poset as an ideal in the poset cross a chain and then do rowmotion to the corresponding ideal.  Unfortunately, the orbit structures of rowmotion on $J(\mathscr{R}(a,b) \times [\ell])$ and $J(\mathscr{T}(a,b) \times [\ell])$ are not always the same.

Another generalization of rowmotion is \emph{piecewise-linear rowmotion}, as introduced in \cite{pwbr_row}. For a poset $\calP$, the order polytope $\mathcal{O}(P)$ is the polytope in $\mathbb{R}^\calP$ defined by
\begin{enumerate}
    \item $0 \leq f(x) \leq 1$ for all $x \in \calP$,
    \item $f(x) \leq f(y)$ for all $x \leq y$.
\end{enumerate}

There is a bijection
$(1/\ell)\mathbb{Z}^{\calP} \cap \mathcal{O}(\calP) \simeq \PP^{\ell}(\calP)$ which is just multiplication
by $\ell$. The \emph{piecewise-linear toggle} on a coordinate $p\in \calP$ of the order polytope $\mathcal{O}(\calP)$ is the map $\tau_p^{\PL}\colon \mathcal{O}(\calP)\to \mathcal{O}(\calP)$ defined by
\[
    \tau^{\PL}_p(f)(q) = \begin{cases}
    f(q) & q \neq p \\
    \min(\{1\} \cup \{f(x) :x>p\}) + \max(\{0\} \cup \{f(x): x<p\}) - f(p) & q = p 
    \end{cases}.
\]
Piecewise-linear rowmotion is the product of piecewise-linear toggles for any linear extension $p_1,p_2 \hdots p_n$,
\[
    \row^{PL} := \tau^{\PL}_{p_1} \circ \tau^{\PL}_{p_2} \circ \hdots \circ \tau^{\PL}_{p_n}.
\] Via the identification $(1/\ell)\mathbb{Z}^{\calP} \cap \mathcal{O}(\calP) \simeq \PP^{\ell}(\calP)$, we get an action \[\row^{\PL}: \PP^{\ell}(\calP) \to \PP^{\ell}(\calP)\] of piecewise-linear rowmotion on $\calP$-partitions.

Hopkins \cite{hopkins2019minuscule} conjectures that the orbit structures of $\calP$-partitions of height $\ell$ on minuscule \dpg\ pairs under piecewise-linear rowmotion are the same.
Our main theorem answers this conjecture in the case $\ell = 1$. However, the bijection $\varphi$ fails to commute with piecewise-linear rowmotion on higher height $\calP$-partitions.

\begin{comment}
\begin{remark}
A more motivating way to define piecewise-linear rowmotion is to form a two bijections from the order polytope to the another polytope formed from a poset called the \emph{chain polytope}. Then composing one bijection with the inverse of the other yields piecewise-linear rowmotion.
\end{remark}
\end{comment}

A question this raises is whether a slight modification of $\varphi$ can commute with piecewise-linear rowmotion. There is a natural way to modify $\varphi$ to yield a bijection between $\IT^{\ell}(\mathscr R(a,b))$ and $\SIT^{\ell}(\mathscr T(a,b))$ (and hence via Remark~\ref{rem:PPtoTableaux} between $\calP$-partitions of these posets):

\begin{prop}
    In Definition \ref{def:hppw_bijection}, replacing the minimal tableaux $S$ with any other unique rectification target $S'$ yields a bijection 
    \[
    \varphi_{S'}(T) := \kjdt_{S'^{-1}(\overline{1})} \circ \kjdt_{S'^{-1}(\overline{2})} \hdots \kjdt_{S'^{-1}(\overline{\max(S')})}(T')
    \]
    between $\IT^{\ell}(\mathscr R(a,b))$ and $\SIT^{\ell}(\mathscr T(a,b))$.
\end{prop}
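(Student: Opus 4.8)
The plan is to verify three things in turn: that $\varphi_{S'}$ is well defined, that it lands in $\SIT^{\ell}(\mathscr T(a,b))$, and that it is a bijection; only the last will actually use that $S'$ is a unique rectification target. First I would observe that well-definedness and the shape of the output depend only on the \emph{shape} of $S'$, not on its entries. Indeed, for any increasing filling of the staircase $\mu'=(a-1,a-2,\dots,1)$, processing its cells in decreasing order of value always presents maximal elements of the current skew part, so each forward slide $\kjdt_{S'^{-1}(\overline{k})}$ is legal; the displaced cells are carried to the complementary region and the process terminates at a straight tableau. That this straight shape is exactly $\mathscr T(a,b)$ is the same shape computation as for the minimal tableau in Definition~\ref{def:hppw_bijection}, and it depends only on the common shape $\mu'$ of $S$ and $S'$. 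Hence these two points follow as in the minimal case and require no new hypothesis.

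For bijectivity, the plan is to exhibit a two-sided inverse built from reverse $\kjdt$ slides. Tracking the slid cells, the forward process sends the skew realization $T'$ to a pair $(T^{*},\tilde{S'})$, where $T^{*}=\varphi_{S'}(T)$ is straight of shape $\mathscr T(a,b)$ and $\tilde{S'}$ is the displaced image of $S'$, occupying the complementary co-staircase $\lambda'/\mathscr T(a,b)$. Restricting the combined tableau to the overlined interval and invoking Lemma~\ref{lem:ResInterval}, $\tilde{S'}$ is $\kjdt$-equivalent to $S'$; since $S'$ is a unique rectification target, it is the \emph{only} straight tableau in its $\kjdt$-class, so $\tilde{S'}$ reverse-rectifies unambiguously to $S'$. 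This uniqueness is precisely what lets the reverse slides (performed in the opposite order) be applied without ambiguity and recover $T'$. As reverse $K$-jeu-de-taquin slides invert forward slides \cite{TY09}, the resulting map is a genuine two-sided inverse.

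The hard part will be showing that the tracer tableau $\tilde{S'}$ is recoverable from $T^{*}$ alone, i.e. that the forward map $T\mapsto T^{*}$ is injective even though it discards $\tilde{S'}$; equivalently, that the pair $(T^{*},\tilde{S'})$ is determined by its first coordinate once $S'$ is fixed. I would handle this exactly as in the minimal case of \cite{HPPW18}: the infusion involution of \cite{TY09} identifies the forward and reverse constructions, and the unique-rectification-target hypothesis forces the reverse-rectification of the co-staircase data to equal $S'$ regardless of the order of slides, which pins down $\tilde{S'}$. Inspecting the bijectivity proof in \cite{HPPW18}, the only property of the minimal staircase $S$ that is used is that it is a URT, so the same argument applies verbatim to $S'$. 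Finally I would note a cardinality shortcut: since the original $\varphi$ already gives $|\IT^{\ell}(\mathscr R(a,b))|=|\SIT^{\ell}(\mathscr T(a,b))|$, it suffices to establish injectivity, so once the recovery of $\tilde{S'}$ is in hand, bijectivity is immediate.
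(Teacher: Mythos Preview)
Your plan has a genuine gap at the shape step. You assert that ``well-definedness and the shape of the output depend only on the \emph{shape} of $S'$, not on its entries,'' and that the straight shape reached is $\mathscr T(a,b)$ by ``the same shape computation as for the minimal tableau.'' This is false for $K$-jeu-de-taquin: unlike ordinary jeu-de-taquin, $K$-rectification is \emph{not} confluent, and different increasing fillings of the same inner staircase (i.e.\ different slide orders) can carry the same skew $T'$ to straight tableaux of different shapes. So nothing you have written establishes that $\varphi_{S'}(T)\in\SIT^{\ell}(\mathscr T(a,b))$. Relatedly, your recovery of $\tilde{S'}$ from $T^*$ via the URT hypothesis does not work as stated: many distinct skew tableaux in a fixed co-staircase shape can rectify to the same URT $S'$, so ``rectifies to $S'$'' does not pin down $\tilde{S'}$.

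The paper avoids both issues by running the argument in the opposite direction. It first uses that the minimal trapezoid tableau $M_{\mathscr T(a,b)}$ is a URT to conclude $\varphi_{S'}(M_{\mathscr R(a,b)})=M_{\mathscr T(a,b)}$, which locates a \emph{fixed} tableau $S''$ (independent of $T$) in the co-staircase. It then defines a map $\SIT^{\ell}(\mathscr T(a,b))\to\IT^{\ell}(\mathscr R(a,b))$ by reverse $\kjdt$ slides along this fixed $S''$; the hypothesis that $S'$ is a URT is exactly what forces $S''$ to forward-rectify to $S'$ regardless of the partner tableau, so the image lies in the rectangle shape. Invertibility of swaps gives injectivity, equal cardinalities give bijectivity, and the infusion involution then identifies $\varphi_{S'}$ as the inverse of this map---yielding the correct codomain and bijectivity for $\varphi_{S'}$ simultaneously. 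A pleasant byproduct is that in fact $\tilde{S'}=S''$ for every $T$, which is the statement you were reaching for but could not get directly.
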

\begin{proof}
    As shown in the $\varphi$ bijection, the minimal tableaux of the rectangle $M_{\mathscr R(a,b)}$ rectifies to the minimal tableaux of the trapezoid $M_{\mathscr T(a,b)}$. Since the latter is a unique rectification target and $\varphi_{S'}(M_{\mathscr R(a,b)})$ is a straight shifted tableaux, 
    \[
    \varphi_{S'}(M_{\mathscr R(a,b)}) = M_{\mathscr T(a,b)}.
    \]
    Let $S''$ be the skew shifted tableau which $S'$ transforms into under the $\kjdt$ slides in $\varphi_{S'}$ i.e.
    \[
        S'' := \widehat{\kjdt}_{M_{\mathscr R(a,b)}^{-1}(a+b-1)} \circ \widehat{\kjdt}_{M_{\mathscr R(a,b)}^{-1}(a+b-2)} \hdots \widehat{\kjdt}_{M_{\mathscr R(a,b)}^{-1}(1)}(S').
    \]
    Since $S'$ is a unique rectification target, there is an map from $\SIT^{\ell}(\mathscr T(a,b)) \rightarrow \IT^{\ell}(\mathscr R(a,b))$ given by
    \[
        T \mapsto \widehat{\kjdt}_{S''^{-1}(a+b-1)} \circ \widehat{\kjdt}_{S''^{-1}(a+b-2)} \hdots \widehat{\kjdt}_{S''^{-1}(1)}(T).
    \]
    Since every $\swap$ is invertible, this map is an injection.  Since $\#\SIT^{\ell}(\mathscr T(a,b)) = \#\IT^{\ell}(\mathscr R(a,b))$, this map is a bijection.  Applying the \textit{infusion involution} of Thomas and Yong \cite{TY09} tells us the inverse of this bijection is $\varphi_{S'}$.
\end{proof}

Theorem \ref{shapeunique} shows that $\varphi_{S'}$ and $\varphi$ act the same on almost minimal tableaux. Thus $\varphi_{S'}$ also commutes with rowmotion on order ideals. Unfortunately for $\mathscr{R}(3,n)$ every $\varphi_{S'}$ acts the same on all $\calP$-partitions. Thus even the modified version of $\varphi$ fails to commute with piecewise-linear rowmotion for heights $>1$.

\bibliographystyle{alpha}
\bibliography{references}

\newcommand{\etalchar}[1]{$^{#1}$}
\begin{thebibliography}{GMP{\etalchar{+}}16}

\bibitem[BKS{\etalchar{+}}08]{buch2008stable}
Anders~Skovsted Buch, Andrew Kresch, Mark Shimozono, Harry Tamvakis, and
  Alexander Yong.
\newblock Stable \uppercase{G}rothendieck polynomials and
  \uppercase{K}-theoretic factor sequences.
\newblock {\em Mathematische Annalen}, 340(2):359--382, 2008.

\bibitem[BS74]{bs74}
A.E. Brouwer and A.~Schrijver.
\newblock {\em On the Period of an Operator, Defined on Antichains}.
\newblock Mathematisch Centrum Amsterdam. Afdeling Zuivere Wiskunde: ZW.
  Stichting Mathematisch Centrum, 1974.

\bibitem[BS16]{buch2016k}
Anders~Skovsted Buch and Matthew~J Samuel.
\newblock K-theory of minuscule varieties.
\newblock {\em Journal f{\"u}r die reine und angewandte Mathematik (Crelles
  Journal)}, 2016(719):133--171, 2016.

\bibitem[CFDF95]{toggles}
P.~J. Cameron and D.~G. Fon-Der-Flaass.
\newblock Orbits of antichains revisited.
\newblock {\em European Journal of Combinatorics}, 16:545–--554, 1995.

\bibitem[CFLM18]{courtiel2018bijections}
Julien Courtiel, Eric Fusy, Mathias Lepoutre, and Marni Mishna.
\newblock Bijections for {Weyl} chamber walks ending on an axis, using arc
  diagrams and {Schnyder} woods.
\newblock {\em European Journal of Combinatorics}, 69:126--142, 2018.

\bibitem[CHHM17]{chan2017expected}
Melody Chan, Shahrzad Haddadan, Sam Hopkins, and Luca Moci.
\newblock The expected jaggedness of order ideals.
\newblock {\em Forum of Mathematics, Sigma}, 5, 2017.

\bibitem[DPS17]{dilks2017resonance}
Kevin Dilks, Oliver Pechenik, and Jessica Striker.
\newblock Resonance in orbits of plane partitions and increasing tableaux.
\newblock {\em Journal of Combinatorial Theory, Series A}, 148:244--274, 2017.

\bibitem[DSV19]{dilks2019rowmotion}
Kevin Dilks, Jessica Striker, and Corey Vorland.
\newblock Rowmotion and increasing labeling promotion.
\newblock {\em Journal of Combinatorial Theory, Series A}, 164:72--108, 2019.

\bibitem[Duc74]{rowfirst}
Pierre Duchet.
\newblock Sur les hypergraphes invariants.
\newblock {\em Discrete Mathematics}, 8(3):269--280, 1974.

\bibitem[Eli15]{elizalde2015bijections}
Sergi Elizalde.
\newblock Bijections for pairs of non-crossing lattice paths and walks in the
  plane.
\newblock {\em European Journal of Combinatorics}, 49:25--41, 2015.

\bibitem[EP14]{pwbr_row}
David Einstein and James Propp.
\newblock Piecewise-linear and birational toggling.
\newblock {\em arXiv preprint arXiv:1404.3455}, 2014.

\bibitem[FdF93]{fon1993orbits}
Dmitry~G Fon-der Flaass.
\newblock Orbits of antichains in ranked posets.
\newblock {\em European journal of combinatorics}, 14(1):17--22, 1993.

\bibitem[GMP{\etalchar{+}}16]{gaetz2016k}
Christian Gaetz, Michelle Mastrianni, Rebecca Patrias, Hailee Peck, Colleen
  Robichaux, David Schwein, and Ka~Yu Tam.
\newblock K-knuth equivalence for increasing tableaux.
\newblock {\em The Electronic Journal of Combinatorics}, 23(1):P1--40, 2016.

\bibitem[Hop17]{hopkins2017cde}
Sam Hopkins.
\newblock The {CDE} property for minuscule lattices.
\newblock {\em Journal of Combinatorial Theory, Series A}, 152:45--103, 2017.

\bibitem[Hop19]{hopkins2019minuscule}
Sam Hopkins.
\newblock Minuscule doppelg\"angers, the coincidental down-degree expectations
  property, and rowmotion.
\newblock {\em arXiv preprint arXiv:1902.07301}, 2019.

\bibitem[HPPW18]{HPPW18}
Zachary Hamaker, Rebecca Patrias, Oliver Pechenik, and Nathan Williams.
\newblock {Doppelg\"angers: Bijections of Plane Partitions}.
\newblock {\em International Mathematics Research Notices}, 03 2018.

\bibitem[Pan09]{Pan09}
Dimitri Panyushev.
\newblock On orbits of antichains of positive roots.
\newblock {\em European Journal of Combinatorics}, 30:586–594, 2009.

\bibitem[PR15]{propp2015homomesy}
James Propp and Tom Roby.
\newblock Homomesy in products of two chains.
\newblock {\em The Electronic Journal of Combinatorics}, 22(3):3--4, 2015.

\bibitem[Pro83]{pro83}
Robert~A Proctor.
\newblock Shifted plane partitions of trapezoidal shape.
\newblock {\em Proceedings of the American Mathematical Society},
  89(3):553--559, 1983.

\bibitem[RS13]{rs13cyclic}
David~B Rush and XiaoLin Shi.
\newblock On orbits of order ideals of minuscule posets.
\newblock {\em Journal of Algebraic Combinatorics}, 37(3):545--569, 2013.

\bibitem[RSW04]{rsw04}
V~Reiner, D~Stanton, and D~White.
\newblock The cyclic sieving phenomenon.
\newblock {\em Journal of Combinatorial Theory Series A}, 108(1):17--50, 2004.

\bibitem[RTY18]{reinerTennerYong2018}
Victor Reiner, Bridget~Eileen Tenner, and Alexander Yong.
\newblock Poset edge densities, nearly reduced words, and barely set-valued
  tableaux.
\newblock {\em Journal of Combinatorial Theory, Series A}, 158:66--125, 2018.

\bibitem[RW15]{rush15}
David Rush and Kevin Wang.
\newblock On orbits of order ideals of minuscule posets \uppercase{II}:
  Homomesy.
\newblock {\em arXiv preprint arXiv:1509.08047}, 2015.

\bibitem[Sta86]{stanley1986two}
Richard~P. Stanley.
\newblock Two poset polytopes.
\newblock {\em Discrete \& Computational Geometry}, 1(1):9--23, 1986.

\bibitem[Sta08]{sta09}
Richard~P. Stanley.
\newblock Promotion and evacuation.
\newblock {\em Electr. J. Comb.}, 16, 2008.

\bibitem[Sta11]{EC1}
Richard~P. Stanley.
\newblock {\em Enumerative Combinatorics: Volume 1}.
\newblock Cambridge University Press, New York, NY, USA, 2nd edition, 2011.

\bibitem[Ste86]{stembridge86}
John~R Stembridge.
\newblock Trapezoidal chains and antichains.
\newblock {\em European Journal of Combinatorics}, 7(4):377--387, 1986.

\bibitem[SW12]{prorow}
Jessica Striker and Nathan Williams.
\newblock Promotion and rowmotion.
\newblock {\em European Journal of Combinatorics}, 33(8):1919--1942, 2012.

\bibitem[TW19]{TW09}
Hugh Thomas and Nathan Williams.
\newblock Rowmotion in slow motion.
\newblock {\em Proceedings of the London Mathematical Society}, 119:1149--1178,
  2019.

\bibitem[TY09]{TY09}
Hugh Thomas and Alexander Yong.
\newblock A jeu de taquin theory for increasing tableaux, with applications to
  {$K$}-theoretic schubert calculus.
\newblock {\em Algebra \& Number Theory}, 3:121–--148, 2009.

\end{thebibliography}

\end{document}